\theoremstyle{thmstyleone}%
\newtheorem{theorem}{Theorem}
\newtheorem{proposition}[theorem]{Proposition}%
\theoremstyle{thmstyletwo}%
\newtheorem{example}{Example}%
\newtheorem{remark}{Remark}%
\newtheorem{definition}{Definition}%
\theoremstyle{thmstylethree}%
\newtheorem{corollary}{Corollary}%
\theoremstyle{thmstylethree}%
\newtheorem{lemma}{Lemma}%
\newtheorem{abctheorem}{Theorem}
\newcommand{\bfe}{\mathbf{e}}
\newcommand{\rmd}{\mathrm{d}}
\newcommand{\snabla}{\nabla^{\Sigma}}
\begin{document}

\title[Curves and surfaces of constant angle]{Curves and surfaces making a constant angle with a parallel transported direction in Riemannian spaces}


\author[1,2]{\fnm{Luiz C. B.} \sur{da Silva}}\email{LBarbosaDaSilva001@dundee.ac.uk}

\author[3]{\fnm{Gilson S.} \sur{Ferreira Jr}}\email{gilson.simoesj@ufrpe.br}

\author[3]{\fnm{Jos\'e D.} \sur{da Silva}}\email{jose.dsilva@ufrpe.br}

\affil[1]{\orgdiv{Division of Mathematics, School of Science and Engineering}, \orgname{University of Dundee}, \orgaddress{\city{Dundee}, \postcode{DD1 4HN}, \country{UK}}}

\affil[2]{\orgdiv{Department of Physics of Complex Systems}, \orgname{Weizmann Institute of Science}, \orgaddress{\city{Rehovot}, \postcode{7610001}, \country{Israel}}}

\affil[3]{\orgdiv{Departamento de Matem\'atica}, \orgname{Universidade Federal Rural de Pernambuco}, \orgaddress{\city{Recife}, \postcode{52171-900}, \state{PE}, \country{Brazil}}}



\abstract{In the last two decades, much effort has been dedicated to studying curves and surfaces according to their angle with a given direction. However, most findings were obtained using a case-by-case approach, and it is often unclear what are consequences of the specificities of the ambient manifold and what could be generic. In this work, we propose a theoretical framework to unify parts of these findings. We study curves and surfaces by prescribing the angle they make with a parallel transported vector field. We show that the characterization of Euclidean helices in terms of their curvature and torsion is also valid in any Riemannian manifold. Among other properties, we prove that surfaces making a constant angle with a parallel transported direction are extrinsically flat ruled surfaces. We also investigate the relation between their geodesics and the so-called slant helices; we prove that surfaces of constant angle are the rectifying surface of a slant helix, i.e., the ruled surface with rulings given by the Darboux vector field of the directrix. We characterize rectifying surfaces of constant angle; in other words, when their geodesics are slant helices. As a corollary, we show that if every geodesic of a surface of constant angle is a slant helix, then the ambient manifold is flat. Finally, we characterize surfaces in the product of a Riemannian surface with the real line making a constant angle with the vertical real direction.
}

\keywords{Helix, Slant helix, Surface of constant angle, Rectifying surface}



\maketitle

\section{Introduction}
\label{sec::Intro}

A major goal of any geometer is to classify geometric objects according to some notion of simplicity. Thus placing the studied objects within a hierarchy. For example, one may study surfaces with zero curvature or Riemannian manifolds with surfaces displaying a certain behavior. In this work, we will be interested in defining classes of curves and surfaces by prescribing the angle they make with a given direction. Naturally, such a classification will greatly depend on the properties of the chosen direction, and one may expect that the simplest curves and surfaces are those making a constant angle with a ``fixed direction".


What a ``fixed direction" means in Euclidean space is quite intuitive, but this is not the case in other spaces. Since most manifolds are not isotropic, a privileged direction $\vec{v}$ in $M^{3}$ exists. Such direction makes defining submanifolds in $M^{3}$ of constant angle easy. In many cases, the direction $\Vec{v}$ is a Killing vector field, as illustrated by several results in  $\mathbb{S}^{2}\times\mathbb{R}$ \cite{dillen2007constantangleS2xR}, in $\mathbb{H}^{2}\times\mathbb{R}$ \cite{dillen2009constantangleH2xR}, in $H_{3}$  \cite{FastenakelsAMS2011}, in $Sol_{3}$ \cite{LopezKJM2011}, in $SL(2,\mathbb{R})$ \cite{MontaldoIJM2014}, in the warped product $I\times_f\mathbb{E}^2$ \cite{dillen2011constantanglewarpedproduct}, 
and in the Berger sphere $\mathbb{S}^{3}_{\epsilon}$ \cite{MendoncaJG2019}. (Most of these examples were obtained in homogeneous manifolds. Except for $\mathbb{S}^2\times\mathbb{R}$, all homogeneous Riemannian manifolds can be seen as a metric Lie group with a semidirect product $\mathbb{R}^2\rtimes\mathbb{R}$ \cite{Meeks2012metricLiegroups,Meeks-Ramos31CBM}. Thus, a natural class of curves and surfaces of constant angle could be obtained by measuring the angle with the vertical direction.) 

Riemannian manifolds generally fail to have symmetries, thus making a theory of constant angle based on Killing fields of limited applicability. To overcome the difficulties associated with the use of Killing fields, in this work, we propose to employ angles with respect to parallel transported fields. In other words, we study curves and surfaces on a three-dimensional Riemannian manifold $M^3$ that make a constant angle with a vector field $V$ parallel transported along the curve and the surface. Intuitively, we have two objects of distinct types, a curve (or surface) and a vector $V$, and we want to perform an angular measurement between them. Using parallel transport, we rigidly move $V$ along the curve (or surface) and then perform the measurement. This reasoning suggests that we should obtain a quite flexible theory for curves of constant angle as parallel transport always exists along curves regardless of the ambient space. On the other hand, the theory for surfaces may be less flexible as parallel transport along any path on a surface depends on the ambient manifold and also on the extrinsic properties of the surface. Our results will confirm this intuition.

The idea of using parallel transported vector fields as a fixed direction to study curves in a Riemannian manifold can be found in a paper by Hayden from the 1930s \cite{Hayden1931helicesRiemannianManifolds}, where the so-called generalized helices are characterized. To our knowledge, the same idea has not been applied to study surfaces or other helices.

In this work, we extend to a generic Riemannian manifold the study of generalized and slant helices with a parallel transported axis (Sect. \ref{sect::CurvesConstAngle}). Sect. \ref{Sect::HelicesAndCylinders} investigates the relation between generalized helices and geodesics of cylinders. We also study surfaces of constant angle (Sect. \ref{sect::SurfContAngle}) and their relation to slant helices (Sect. \ref{sect::SlantHelicesAndSurfConstAngle}). Finally, in Sect. \ref{sect::Conclusion}, we present our concluding remarks.

We finish this introductory section by giving a summary of our main results. (The theorems below are not in 1-1 correspondence with the results in the rest of the text.) In Section \ref{sect::CurvesConstAngle}, we establish a Lancret Theorem for helices on a generic manifold: 

\begin{abctheorem}\label{thrABC::GeneralLancretThr}
    A curve $\gamma: I\rightarrow M^{3}$ with curvature $\kappa>0$ makes a constant angle with a parallel transported field $V$ if, and only if, the ratio $\tau / \kappa$ of torsion to curvature is constant. In addition, $\tau / \kappa=\cot(\theta)$ and  $V=\cos(\theta) T+\sin(\theta) B$, where $\{T,N,B\}$ is the Frenet frame of $\gamma$.
\end{abctheorem}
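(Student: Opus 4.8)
The plan is to reduce everything to the Frenet--Serret equations written with the covariant derivative $\bnabla$ along the curve. First I would recall (or establish) that for a unit-speed curve $\gamma$ with $\kappa>0$, the Frenet frame satisfies $\bnabla_T T = \kappa N$, $\bnabla_T N = -\kappa T + \tau B$, and $\bnabla_T B = -\tau N$, exactly as in the Euclidean case, the only change being that the ordinary derivative is replaced by $\bnabla$. The hypothesis that $V$ is parallel transported along $\gamma$ reads $\bnabla_T V = 0$, which immediately forces $|V|$ to be constant, so I may normalize $|V|=1$.

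For the forward implication, I would write $V = a\,T + b\,N + c\,B$ with $a=\langle V,T\rangle$, $b=\langle V,N\rangle$, $c=\langle V,B\rangle$. The constant-angle hypothesis says $a=\cos\theta$ is constant. Differentiating $\langle V,T\rangle$ and using $\bnabla_T V=0$ together with $\bnabla_T T=\kappa N$ gives $0=\kappa\,b$; since $\kappa>0$, this forces $b=\langle V,N\rangle=0$, so $V=\cos\theta\,T + c\,B$ with $\cos^2\theta + c^2 = 1$, i.e.\ $c=\sin\theta$ after fixing orientation. Differentiating $\langle V,N\rangle=0$ and using $\bnabla_T N=-\kappa T+\tau B$ then yields $-\kappa\cos\theta + \tau\sin\theta = 0$, that is $\tau/\kappa = \cot\theta$, which is constant. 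This single computation simultaneously proves the implication and produces both additional formulas.

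For the converse, I would assume $\tau/\kappa=\cot\theta$ is constant and simply \emph{define} $V := \cos\theta\,T + \sin\theta\,B$ along $\gamma$. A direct computation gives $\bnabla_T V = (\kappa\cos\theta - \tau\sin\theta)\,N = 0$, so $V$ is parallel transported, while $\langle V,T\rangle = \cos\theta$ is constant, so the angle is indeed constant.

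The main point here is not an obstacle but an observation: the ambient curvature never enters the argument. The whole proof uses only first covariant derivatives of the Frenet frame, whose algebraic form is identical to the Euclidean one, so the computation is entirely ``curvature-blind'' and the classical Lancret statement survives verbatim. The only genuine care is needed at the degenerate angles: $\theta=0$ must be excluded, since then $V=T$ would force $\bnabla_T T = \kappa N = 0$, impossible as $\kappa>0$, whereas $\theta=\pi/2$ is admissible and corresponds to $\tau=0$ with $V=B$, consistent with $\cot(\pi/2)=0$.
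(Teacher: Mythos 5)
Your proof is correct and follows essentially the same route as the paper: differentiate the inner products of $V$ with the Frenet frame, use $\nabla_T V=0$ and $\kappa>0$ to kill the $N$-component, and define $V=\cos\theta\,T+\sin\theta\,B$ for the converse. The only cosmetic difference is that you get constancy of $\langle V,B\rangle$ from the unit-norm constraint rather than by differentiating it directly, and your closing remark about the degenerate angle $\theta=0$ is a sensible addition.
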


Theorem \ref{thrABC::GeneralLancretThr} extends the Lancret Theorem in Euclidean space, see, e.g., Theorem 15.1 of Ref. \cite{Kreyszig1991}, and is a restriction to dimension 3 of Hayden's study \cite{Hayden1931helicesRiemannianManifolds}. It also extends results in $\mathbb{S}^2\times\mathbb{R}$, Theorem 1 of Ref. \cite{nistor2017constantangleS2xR}. (Note that the vertical real direction $\partial_t$ in $\mathbb{S}^2\times\mathbb{R}$ is parallel transported along any curve.)

 In Section \ref{sect::CurvesConstAngle}, we also provide a characterization of slant helices: 

\begin{abctheorem}\label{thrABC::SlantHelix}
    A curve $\gamma: I\rightarrow M^{3}$ with $\kappa>0$ is a slant  helix if, and only if, 
    \[
      \frac{\rmd\sigma}{\rmd s}=0,\quad \sigma = \frac{\kappa^2}{(\kappa^2+\tau^2)^{\frac{3}{2}}}\frac{\rmd }{\rmd s}\left(\frac{\tau}{\kappa}\right).
    \]
    In addition, $\sigma=\cot(\theta)$ and  $V=\cos(\theta) N+\sin(\theta) D$, where $D=(\tau T+\kappa B)/\sqrt{\tau^2+\kappa^2}$.
\end{abctheorem}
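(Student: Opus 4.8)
The plan is to adapt the argument behind Theorem~A, replacing the Frenet frame $\{T,N,B\}$ by a ``Darboux frame'' adapted to the principal normal. Recall that $\gamma$ is a slant helix exactly when its principal normal $N$ makes a constant angle with some parallel transported unit field $V$, i.e. $\langle N,V\rangle=\cos\theta$ with $\theta$ constant and $\bnabla_T V=0$, where $\bnabla_T$ denotes covariant differentiation along $\gamma$ in the direction $T=\gamma'$. Writing $\rho=\sqrt{\kappa^2+\tau^2}$ and $D=(\tau T+\kappa B)/\rho$, I first introduce the complementary unit field $D^{*}=(-\kappa T+\tau B)/\rho$, so that $\{N,D^{*},D\}$ is orthonormal along $\gamma$: orthogonality is immediate because $D$ and $D^{*}$ span the $T$--$B$ plane, to which $N$ is orthogonal, and both are unit.

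The key step is to compute the Frenet-type equations of this frame (primes denote $\rmd/\rmd s$). The Frenet--Serret relations give $\bnabla_T N=-\kappa T+\tau B=\rho D^{*}$ at once. Differentiating $D$ and using Frenet--Serret again, the two $N$-contributions cancel, leaving $\bnabla_T D=(\tau/\rho)'\,T+(\kappa/\rho)'\,B$, which therefore stays in the $T$--$B$ plane and is orthogonal to $D$ since $|D|=1$. Projecting onto $D^{*}$ and simplifying the combination $\kappa(\tau/\rho)'-\tau(\kappa/\rho)'=\kappa^2(\tau/\kappa)'/\rho$ identifies the surviving coefficient as exactly $-\rho\sigma$. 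Completing the system by orthonormality yields
\begin{equation*}
\bnabla_T N=\rho D^{*},\qquad \bnabla_T D^{*}=-\rho N+\rho\sigma D,\qquad \bnabla_T D=-\rho\sigma D^{*},
\end{equation*}
a system formally identical to the Frenet equations, with ``curvature'' $\rho$ and ``torsion'' $\rho\sigma$.

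With these equations in hand the theorem follows just as Theorem~A does from the ordinary Frenet frame. For the direct implication I expand a parallel unit field as $V=v_N N+v_{D^{*}}D^{*}+v_D D$ with $v_N=\cos\theta$ constant; imposing $\bnabla_T V=0$ componentwise forces $v_{D^{*}}=0$ from the $N$-equation (as $\rho>0$), then $v_D$ constant from the $D$-equation, while the $D^{*}$-equation reads $\cos\theta=v_D\,\sigma$. Normalization gives $v_D=\pm\sin\theta$, so $V=\cos\theta\,N+\sin\theta\,D$ and $\sigma=\cot\theta$, which is constant. Conversely, if $\sigma$ is constant I set $\theta=\mathrm{arccot}\,\sigma$ and $V=\cos\theta\,N+\sin\theta\,D$; the frame equations then show directly that $\bnabla_T V=0$, while $\langle N,V\rangle=\cos\theta$ is constant, so $\gamma$ is a slant helix. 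I expect the only genuine obstacle to be the intermediate algebra producing $\bnabla_T D=-\rho\sigma D^{*}$, namely verifying the cancellation of the $N$-component and recognizing $\kappa(\tau/\rho)'-\tau(\kappa/\rho)'$ as $\rho\sigma$; once the Darboux-frame equations are established, the rest is a direct transcription of the Lancret argument. One should also observe that $\sin\theta\neq0$ (otherwise $V=\pm N$ would force $\bnabla_T N=0$, impossible since $\kappa>0$), which guarantees that $\cot\theta$ is well defined.
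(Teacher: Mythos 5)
Your proof is correct, and its computational core coincides with the paper's: the identity $\kappa(\tau/\rho)'-\tau(\kappa/\rho)'=\kappa^2(\tau/\kappa)'/\rho$ that you need in order to obtain $\bnabla_T D=-\rho\sigma D^{*}$ is exactly the simplification the paper performs inline when it expands $\nabla_T V$ for $V=\cos\theta\,N+\sin\theta\,D$ and finds $(\cos\theta-\sigma\sin\theta)(-\kappa T+\tau B)$. The difference is in the packaging. The paper first differentiates $\langle N,V\rangle=\cos\theta$ to place $V$ in the $N$--$D$ plane and then checks parallelism by one direct computation; you instead assemble the full orthonormal frame $\{N,D^{*},D\}$, derive its Frenet-type structure equations with ``curvature'' $\rho$ and ``torsion'' $\rho\sigma$, and then transcribe the Lancret argument of Theorem~\ref{thrABC::GeneralLancretThr} verbatim. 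What this buys is a clean conceptual reduction --- a slant helix is precisely a generalized helix for the frame $\{N,D^{*},D\}$ --- and it is in effect the frame-level counterpart of the paper's alternative proof via the normal indicatrix, where $\rho=\Vert\nabla_T N\Vert$ is the speed of the indicatrix and $\sigma$ its geodesic curvature. Your observation that $\sin\theta\neq 0$ (since $V=\pm N$ would force $\nabla_T N=0$, impossible when $\kappa>0$), so that $\cot\theta$ is well defined, is a small point the paper leaves implicit; the residual sign ambiguity $v_D=\pm\sin\theta$ is handled at the same (informal) level as in the paper, by absorbing it into the choice of $\theta$.
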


Theorem \ref{thrABC::SlantHelix} extends the study of slant helices in Euclidean space \cite{IzumiyaTJM2004}. To our knowledge, Theorem \ref{thrABC::SlantHelix} is the first result concerning slant helices on a generic manifold with respect to a parallel transported direction. We also show how to define a normal indicatrix of a curve $\gamma:I\to M^3$, i.e., how to see the principal normal as a spherical curve, which allows us to geometrically interpret $\sigma$; it is the geodesic curvature of the normal indicatrix. Constructing a tangent indicatrix also allows us to provide an interpretation for the ratio of torsion to curvature in Theorem \ref{thrABC::GeneralLancretThr} as a spherical curvature.

It is known that generalized helices in Euclidean space are characterized as geodesics of cylinders \cite{Kreyszig1991}. In Section \ref{Sect::HelicesAndCylinders}, we investigate the relation between generalized helices and geodesics of intrinsic cylinders. By an intrinsic cylinder, we mean the ruled surface whose director field of the rulings is parallel transported along the generating curve. (Hayden did not investigate whether generalized helices are cylindrical geodesics in Ref. \cite{Hayden1931helicesRiemannianManifolds}.) The main result of Section \ref{Sect::HelicesAndCylinders} is

\begin{abctheorem}
    Every geodesic of an intrinsic cylinder $\mathcal{C}\subset M^3$ is a generalized helix if, and only if, $\mathcal{C}$ is intrinsically and extrinsically flat. In addition, $M^3$ is flat if, and only if, every geodesic of any intrinsic cylinder is a generalized helix.
\end{abctheorem}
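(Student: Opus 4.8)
The plan is to reduce everything to properties of a single Jacobi field and then invoke Theorem A. Parametrize the intrinsic cylinder as $X(s,t)=\exp_{\alpha(s)}(t\,U(s))$, where $\alpha$ is a unit-speed directrix, $U$ is the unit ruling field parallel transported along $\alpha$, and (without loss of generality) $\langle\alpha',U\rangle=0$. Each ruling $t\mapsto X(s,t)$ is a geodesic of $M^3$, hence also of $\mathcal{C}$, so $X_t$ is a unit asymptotic field, $\mathrm{II}(X_t,X_t)=0$. The field $J:=X_s$ is the Jacobi field along each ruling with $J(0)=\alpha'$ and $J'(0)=0$ (because $U$ is parallel along $\alpha$), and one checks $\langle J,X_t\rangle\equiv0$. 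The first fundamental form is $E=|J|^2$, $F=0$, $G=1$, so $K=-(\sqrt E)_{tt}/\sqrt E$, while the second fundamental form reduces to the single off-diagonal coefficient $\mathrm{II}(X_s,X_t)=\langle J',\nu\rangle$ ($\nu$ the unit normal), whence $K_{\mathrm{ext}}=\det S=-q^2$ with $q=\mathrm{II}(\hat J,X_t)=\langle J',\nu\rangle/\sqrt E$. Introducing a parallel orthonormal frame $\{X_t,e_2,e_3\}$ along the ruling and writing $J=a\,e_2+b\,e_3$ (so $a(0)=1,\ b(0)=0,\ a'(0)=b'(0)=0$), I record two clean criteria: $\mathcal{C}$ is intrinsically flat iff $a^2+b^2\equiv1$, and extrinsically flat iff $ab'-a'b\equiv0$; together, using the initial data, these force $a\equiv1$, $b\equiv0$, i.e. $J$ is parallel and $X_t$ is parallel in $M^3$ on all of $\mathcal{C}$.

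Next I reduce the helix condition and dispose of the ``if'' direction. For a geodesic $\gamma$ of $\mathcal{C}$ the ambient acceleration is normal, so $\kappa_\gamma=|k_n|$, $N_\gamma=\pm\nu$ and $\tau_\gamma=\tau_g$ (the geodesic torsion); hence, by Theorem A, a geodesic with $\kappa_\gamma>0$ is a generalized helix iff $\tau_g/k_n$ is constant along it. The geometric heart is that the surface-tangent field $V=\cos\theta\,\gamma'+\sin\theta\,(\nu\times\gamma')$ is automatically parallel inside $\mathcal{C}$ along a geodesic for every constant $\theta$, so $V$ is parallel in $M^3$ iff its normal component $\mathrm{II}(\gamma',V)=\cos\theta\,k_n+\sin\theta\,\tau_g$ vanishes; this both recovers $\tau_g/k_n=-\cot\theta$ and identifies $V$ with $\cos\theta\,T+\sin\theta\,B$ of Theorem A. Now if $\mathcal{C}$ is intrinsically and extrinsically flat, the first paragraph gives $J$ parallel, $E\equiv1$, so the induced metric is $\rmd s^2+\rmd t^2$: $\mathcal{C}$ is locally a Euclidean plane in which the rulings are the parallel lines $\{s=\mathrm{const}\}$ and $X_t$ is parallel in $M^3$. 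Every geodesic of $\mathcal{C}$ is then a straight line in $(s,t)$ making a constant angle with the parallel field $X_t$, hence a generalized helix.

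For the ``only if'' direction I assume $\tau_g/k_n$ is constant along every geodesic. In the orthonormal tangent frame $\{\hat J,X_t\}$ the shape operator has entries $p=\mathrm{II}(\hat J,\hat J)$, $q=\mathrm{II}(\hat J,X_t)$, $r=\mathrm{II}(X_t,X_t)=0$, so $K_{\mathrm{ext}}=-q^2$ and it suffices to prove $q\equiv0$; Gauss' equation $K=K_{\mathrm{ext}}+\bar K$ then yields $K=0$ once the surviving curvature term is controlled. For a direction at angle $\omega$ with $\hat J$,
\[
k_n=p\cos^2\omega+q\sin2\omega,\qquad \tau_g=-\tfrac{p}{2}\sin2\omega+q\cos2\omega,
\]
and $\tfrac{\rmd}{\rmd\ell}(\tau_g/k_n)=0$ must hold for every geodesic. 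I would expand this derivative using $\tfrac{\rmd}{\rmd\ell}\langle\gamma',X_t\rangle=\tfrac12\dot s^{\,2}E_t$ (how the direction turns) together with the Codazzi/Darboux relations for $\tfrac{\rmd k_n}{\rmd\ell}$ and $\tfrac{\rmd\tau_g}{\rmd\ell}$ along a geodesic, and then match the coefficients of the resulting trigonometric polynomial in $\omega$ as the direction is varied at a fixed point; their vanishing forces $q\equiv0$ and the relevant sectional curvature to vanish. This coefficient-matching -- disentangling the $s$- and $t$-variation of $(p,q)$ through Codazzi from the freedom in the geodesic direction -- is the step I expect to be the main obstacle; evaluating first on the directrix, where $q=0$ and $p=\mathrm{II}(\alpha',\alpha')$, and then propagating in $t$ is a convenient way to organize it.

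Finally, the second statement follows from the first. If $M^3$ is flat then $R\equiv0$, so on every intrinsic cylinder $J$ is the parallel transport of $\alpha'$; hence each such cylinder is intrinsically and extrinsically flat and, by the first part, all its geodesics are generalized helices. Conversely, if every geodesic of every intrinsic cylinder is a generalized helix, the first part makes each cylinder intrinsically and extrinsically flat, so Gauss' equation gives $\bar K=0$ on the tangent plane of every intrinsic cylinder. Since any $2$-plane $\Pi\subset T_pM^3$ is tangent at $p$ to some intrinsic cylinder (take a directrix with $\alpha'(0),U(0)$ an orthonormal basis of $\Pi$ and parallel transport $U$), all sectional curvatures of $M^3$ vanish and $M^3$ is flat.
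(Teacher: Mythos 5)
Your ``if'' direction and the second equivalence are sound and close to the paper's argument: once both curvatures vanish you show the ruling field $X_t$ is parallel in $M^3$ over all of $\mathcal{C}$, so every surface geodesic makes a constant angle with it (this is the paper's Lemma 1), and the passage from ``every cylinder is doubly flat'' to ``$M^3$ is flat'' via Gauss' equation and the realizability of an arbitrary tangent $2$-plane is exactly the paper's. Your Jacobi-field bookkeeping ($a^2+b^2\equiv1$ for intrinsic flatness, $ab'-a'b\equiv0$ for extrinsic flatness, the initial data then forcing $J$ parallel) is a valid, more computational substitute for the paper's one-line decomposition $\nabla_UV=\nabla^{\mathcal{C}}_UV+h_{12}\,\nu$. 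One caution: your ``without loss of generality $\langle\alpha',U\rangle=0$'' is not free, since replacing the directrix by an orthogonal trajectory need not preserve the defining property $\nabla_{\alpha'}U=0$, which you use to get $J'(0)=0$; you should either keep the $F\neq0$ terms or justify the reparametrization.

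The genuine gap is the ``only if'' direction of the first equivalence. You reduce it to showing that constancy of $\tau_g/k_n$ along every geodesic forces $q\equiv0$ and the ambient curvature term in Gauss' equation to vanish, and then you explicitly defer the computation (``the step I expect to be the main obstacle''). That step is never carried out, and it is not routine: the Lancret criterion of Theorem A applies only where $\kappa_\gamma=|k_n|>0$, and $k_n=p\cos^2\omega+q\sin2\omega$ changes sign near the ruling direction whenever $q\neq0$, so both the family of admissible geodesics and the coefficient matching in $\omega$ need care. The paper avoids this entirely by proving a different, cleaner equivalence: the cylinder's axis $V$ is parallel transported along \emph{every} curve of $\mathcal{C}$ if and only if $\mathcal{C}$ is intrinsically and extrinsically flat, which follows immediately from $\nabla_UV=\nabla^{\mathcal{C}}_UV+h_{12}\,\nu$ together with $h_{22}=0$; the helix statement is then obtained only in the direction ``$V$ parallel $\Rightarrow$ geodesics are helices with axis $V$''. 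Indeed, the paper's concluding remarks explicitly leave open whether a cylindrical geodesic could be a generalized helix with an axis different from the cylinder's, so the literal converse you are attacking is stronger than what the paper establishes. Either complete your trigonometric-polynomial argument in full (including the degenerate directions), or restate the converse as the paper does, in terms of the axis $V$ being parallel along the surface.
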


In Section \ref{sect::SurfContAngle}, we investigate surfaces making a constant angle with a parallel direction. The results of Section \ref{sect::SurfContAngle}, as well as those of Subsection \ref{subsect::SurfConstAngleM2xR}, generalize the findings of Dillen \emph{et al.} in $\mathbb{S}^2\times\mathbb{R}$ and $\mathbb{H}^2\times\mathbb{R}$ for surfaces making a constant angle with the vertical real direction \cite{dillen2007constantangleS2xR,dillen2009constantangleH2xR}. The main result of Section \ref{sect::SurfContAngle} is

\begin{abctheorem}\label{thrABC::SurfCteAngleAreRuledFlat}
    Let $\Sigma^2\subset M^3$ be a surface making a constant angle with a parallel transported direction. Then, $\Sigma^2$ is an extrinsically flat and ruled surface.
\end{abctheorem}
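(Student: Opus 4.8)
The plan is to exploit the constant-angle hypothesis by decomposing the parallel transported field into its normal and tangential components relative to $\Sigma^2$. Let $N$ denote a unit normal of $\Sigma^2$ and let $V$ be the unit field that is parallel transported along $\Sigma^2$, so that $\bnabla_X V=0$ for every $X$ tangent to $\Sigma^2$, where $\bnabla$ is the ambient connection. Since parallel transport preserves lengths and inner products, $|V|\equiv 1$ and the angle $\theta$ between $V$ and $N$ is constant by hypothesis; writing $V=\cos\theta\,N+\sin\theta\,U$, one checks that $U$ is a unit field tangent to $\Sigma^2$ (well defined and smooth whenever $\sin\theta\neq 0$). The whole argument then reduces to differentiating this identity along the surface and reading off the tangential and normal parts.

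First I would differentiate $V=\cos\theta\,N+\sin\theta\,U$ along an arbitrary tangent vector $X$, using the Weingarten equation $\bnabla_X N=-S(X)$ (with $S$ the shape operator of $\Sigma^2$) and the Gauss formula $\bnabla_X U=\snabla_X U+\langle S(X),U\rangle\,N$, where $\snabla$ is the induced connection. Setting $\bnabla_X V=0$ and splitting into components yields
\[
  -\cos\theta\,S(X)+\sin\theta\,\snabla_X U=0,\qquad \sin\theta\,\langle S(X),U\rangle=0 .
\]
From the normal equation, assuming $\sin\theta\neq 0$, we get $\langle S(X),U\rangle=0$ for all tangent $X$; by symmetry of the shape operator this says $\langle X,S(U)\rangle=0$ for all $X$, hence $S(U)=0$. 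Thus $U$ spans the kernel of $S$, the Gauss--Kronecker curvature $\det S$ vanishes, and $\Sigma^2$ is extrinsically flat.

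For the ruled structure I would specialize the tangential equation to $X=U$: since $S(U)=0$ it gives $\snabla_U U=0$, so the integral curves of $U$ are intrinsic geodesics of $\Sigma^2$; combined with $\langle S(U),U\rangle=0$, the Gauss formula then gives $\bnabla_U U=\snabla_U U+\langle S(U),U\rangle\,N=0$. Hence the integral curves of $U$ are geodesics of the ambient $M^3$, and $\Sigma^2$ is foliated by ambient geodesics, i.e., it is ruled. The main point requiring care is the degenerate angles: when $\sin\theta=0$ the field $V$ is normal and the Weingarten equation forces $S\equiv 0$, a totally geodesic and hence extrinsically flat and trivially ruled surface, while when $\cos\theta=0$ the field $V=U$ is tangent and the same computation still yields $S(U)=0$ and $\bnabla_U U=0$; one should also confirm that $U$ is a globally smooth unit field so that its integral curves foliate $\Sigma^2$ without singular leaves.
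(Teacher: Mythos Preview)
Your proof is correct. For extrinsic flatness you and the paper do essentially the same thing (their Proposition~\ref{prop::PropertiesOfTangentProjOfParallelDirection} and Theorem~\ref{thr::ConstParAngleSurfAreExtFlat}): decompose $V$, differentiate, separate tangential and normal parts, and read off $S(U)=0$.

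For ruledness, however, you take a genuinely shorter route. You specialize the tangential identity $\sin\theta\,\snabla_X U=\cos\theta\,S(X)$ to $X=U$, obtain $\snabla_U U=0$, combine this with $h(U,U)=0$ via the Gauss formula, and conclude immediately that the integral curves of $U$ are ambient geodesics. The paper instead appeals to an external criterion (Proposition~5 of \cite{daSilvaSPJMS2022}) stating that an extrinsically flat surface is ruled if and only if the ambient curvature component $R_{2113}$ vanishes, and then verifies this by explicitly computing $R(\bfe_2,\bfe_1)\bfe_1$ from the induced connection (Proposition~\ref{prop::LeviCivitaConnectionOfConstParalAngleSurf} and Theorem~\ref{thr::ConstParAngleSurfAreRuled}). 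Your argument is self-contained and avoids the curvature-tensor machinery; in fact the paper already records both $\snabla_{\bfe_1}\bfe_1=0$ and $h(\bfe_1,\bfe_1)=0$ (in Proposition~\ref{prop::LeviCivitaConnectionOfConstParalAngleSurf} and Eq.~\eqref{eq::2ndFFConstAngleSurf}), which together give $\nabla_{\bfe_1}\bfe_1=0$ directly, so the detour through $R_{2113}$ is not actually needed for this statement. What the paper's approach buys is a tie-in to their broader framework for extrinsically flat ruled surfaces; what yours buys is a proof that works in isolation with no external references.
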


It is known that Euclidean slant helices are geodesics of surfaces of constant angle \cite{LucasBBMS2016}. In Section \ref{sect::SlantHelicesAndSurfConstAngle}, we investigate the possibility of generalizing this characterization to a generic ambient space. We obtain a relation between slant helices and rectifying surfaces in Subsection \ref{subsect::RectSurfAndConstantAngle}:

\begin{abctheorem}\label{thrABC::RectifyingSurfAsSurfCteAngle}
    If $\Sigma^2\subset M^3$ is a surface of constant angle, then there exists a slant helix $\gamma$ such that $\Sigma^2$ is parametrized as the rectifying surface $\mathcal{R}_{\gamma}$ of $\gamma$:
    \[
      \mathcal{R}_{\gamma}:\quad X(u,v) = \exp_{\gamma(u)}\Big(v\,\frac{\tau\,T+\kappa\,B}{\sqrt{\tau^2+\kappa^2}}(u)\Big).
    \]
   On the other hand, if $\gamma$ is a slant helix with axis $V$, then $\mathcal{R}_{\gamma}$ is a surface of constant angle if, and only if, the field obtained by parallel transporting $V$ along the rulings of $\mathcal{R}_{\gamma}$ lies in the kernel of the curvature operator $R(\frac{\partial X}{\partial u},\frac{\partial X}{\partial v})$.
\end{abctheorem}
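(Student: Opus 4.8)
The plan is to prove the two implications separately, leaning on Theorem~\ref{thrABC::SurfCteAngleAreRuledFlat} for the forward statement and on a commutator-of-covariant-derivatives computation for the characterization. Throughout I write $\eta$ for a unit normal of $\Sigma^2$, and I take the defining datum of a surface of constant angle to be a unit field $V$ with $\bnabla_X V=0$ for every $X$ tangent to $\Sigma^2$ and $\langle V,\eta\rangle=\cos\alpha$ constant.

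For the first statement I would start from such a pair $(\Sigma^2,V)$ and split $V=\cos\alpha\,\eta+V^{\top}$, so that the tangential part $V^{\top}=V-\cos\alpha\,\eta$ has constant length $\sin\alpha$, giving a distinguished unit tangent field $W=V^{\top}/\sin\alpha$. By Theorem~\ref{thrABC::SurfCteAngleAreRuledFlat}, $\Sigma^2$ is an extrinsically flat ruled surface; the first task is to identify its rulings, and I would show they are the geodesics of $M^3$ issuing in the direction $W$. I would then choose the directrix $\gamma$ to be a geodesic of $\Sigma^2$ crossing the rulings transversally, parametrized by arc length: since $\snabla_T T=0$ forces $\bnabla_T T=\kappa N$ to be normal to $\Sigma^2$, this gives $N=\eta$ along $\gamma$. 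Constancy of $\langle V,\eta\rangle=\langle V,N\rangle$ with $V$ parallel then says, by definition and by Theorem~\ref{thrABC::SlantHelix}, that $\gamma$ is a slant helix with axis $V$. Comparing $V=\cos\alpha\,\eta+\sin\alpha\,W$ with the expression $V=\cos\theta\,N+\sin\theta\,D$ of Theorem~\ref{thrABC::SlantHelix} and using $N=\eta$ yields $\theta=\alpha$ and $D=W$; hence the rulings point along the Darboux field, and since they are geodesics the exponential-map parametrization recovers $\Sigma^2=\mathcal{R}_\gamma$.

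For the second statement, let $\gamma$ be a slant helix with axis $V$ and build $\mathcal{R}_\gamma$ as stated, its rulings being geodesics of $M^3$ by construction. Writing $X_u=\partial X/\partial u$ and $X_v=\partial X/\partial v$, I would define a field $\tilde V$ on $\mathcal{R}_\gamma$ by $\tilde V(u,0)=V(u)$ and parallel transport along each ruling, so that $\bnabla_{X_v}\tilde V=0$ by construction, while $\bnabla_{X_u}\tilde V=0$ on $\gamma$ because $V$ is the parallel slant-helix axis. Since $[X_u,X_v]=0$ and $\bnabla_{X_v}\tilde V=0$, the commutator reads
\[
  \bnabla_{X_v}\!\big(\bnabla_{X_u}\tilde V\big)
  = R(X_v,X_u)\,\tilde V
  = -\,R(X_u,X_v)\,\tilde V .
\]
Thus $\bnabla_{X_u}\tilde V$ vanishes on $\gamma$ and evolves in $v$ with derivative $-R(X_u,X_v)\tilde V$, so $\tilde V$ is parallel over all of $\mathcal{R}_\gamma$ exactly when $R(X_u,X_v)\tilde V\equiv 0$; conversely, if $\tilde V$ is parallel, the same commutator forces $R(X_u,X_v)\tilde V=0$. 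To finish I would show that "$\tilde V$ parallel on $\mathcal{R}_\gamma$" is equivalent to "$\mathcal{R}_\gamma$ of constant angle": along $\gamma$ the angle $\langle\tilde V,\eta\rangle$ is constant because $\gamma$ is a slant helix, and along the rulings it is preserved because $\mathcal{R}_\gamma$ is an extrinsically flat ruled surface with geodesic rulings, so that $\eta$ stays parallel along the rulings up to a tangential Weingarten term controlled by the degenerate shape operator. Uniqueness of the constant-angle parallel field along $\gamma$ identifies it with $\tilde V$ in the converse direction.

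The main obstacle I anticipate is the reconstruction step in the first statement---proving that the rulings of the constant-angle surface align with the Darboux direction and that the transversal geodesic directrix is genuinely a slant helix with $D=W$---together with, in the second statement, the verification that constancy of the angle is transported along the rulings from the flat ruled structure. Once these geometric facts are in place, the curvature-operator condition itself drops out cleanly from the second-covariant-derivative commutator above.
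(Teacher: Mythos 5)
Your overall strategy coincides with the paper's. For the first implication the paper uses exactly your chain: a geodesic $\gamma$ of $\Sigma^2$ transversal to the rulings has principal normal $N=\pm\nu$, hence is a parallel slant helix; $\Sigma^2$ is extrinsically flat and ruled with rulings tangent to the projection of the axis (Theorem \ref{thr::ConstParAngleSurfAreRuled}); and the ruling direction is the Darboux direction. The only real difference is that the paper deduces this last point from Theorem \ref{thr::CharacRectifyingSurf} (the rectifying surface is the unique ruled surface containing $\gamma$ as a geodesic and extrinsically flat along it), whereas you read it off by matching the decompositions $V=\cos\alpha\,\eta+\sin\alpha\,W$ and $V=\cos\theta\,N+\sin\theta\,D$; both are fine. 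The second implication is proved in the paper by the same commutator computation you propose, using $[\partial_u,\partial_v]=0$, $\nabla_{\partial_v}\tilde V=0$, and the vanishing of $\nabla_{\partial_u}\tilde V$ along $\gamma$ to conclude that $\tilde V$ is parallel on all of $\mathcal{R}_\gamma$ if, and only if, $R(\partial_u,\partial_v)\tilde V=0$.

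The one step you must tighten is the equivalence ``$\tilde V$ parallel on $\mathcal{R}_\gamma$ $\Leftrightarrow$ $\mathcal{R}_\gamma$ of constant angle'' in the ``if'' direction. You justify the constancy of $\langle\tilde V,\eta\rangle$ along the rulings by invoking extrinsic flatness of $\mathcal{R}_\gamma$, but at that stage extrinsic flatness is not available: it is a \emph{consequence} of the constant-angle property you are trying to establish, and the paper explicitly notes that a rectifying surface in a generic manifold need not be extrinsically flat, so the appeal is circular as written. Moreover, a field that is merely parallel along all curves of a surface does not in general make a constant angle with its normal (constant fields in $\mathbb{E}^3$ already show this); by Proposition \ref{prop::PropertiesOfTangentProjOfParallelDirection} one needs $A\tilde V^{\top}=0$, and along $\gamma$ this does hold because $h_{22}=0$ (geodesic rulings) and $h_{12}=\langle\nabla_T D, N\rangle=\langle\sigma(\kappa T-\tau B),N\rangle=0$, but propagating it off $\gamma$ requires an argument. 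The paper routes around this by taking as candidate axis the explicit field $\sin\theta\,\partial_v-\cos\theta\,\nu$, whose angle with $\nu$ is constant by inspection, so that only its parallelism remains to be checked; if you keep your abstractly defined $\tilde V$, you should either show it coincides with this explicit field (equivalently, that $\nu$ is parallel along the rulings) or supply the propagation argument for $A\tilde V^{\top}=0$.
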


The field obtained by parallel transport of $V$ along the rulings of $\mathcal{R}_{\gamma}$ is the natural candidate to be the axis of $\mathcal{R}_{\gamma}$. As a corollary of the second part of Theorem \ref{thrABC::RectifyingSurfAsSurfCteAngle}, we show that manifolds of constant non-vanishing curvature do not have surfaces of constant angle. In addition, every rectifying surface of a slant helix is a surface of constant angle if, and only if, $M^3$ is flat.

A version of Theorem \ref{thrABC::SurfCteAngleAreRuledFlat} is obtained in $\mathbb{S}^2\times\mathbb{R}$ and $\mathbb{H}^2\times\mathbb{R}$ by direct integration of the coordinates of the immersion from the constant angle condition \cite{dillen2007constantangleS2xR,dillen2009constantangleH2xR}. Our approach is based on the theory of extrinsically flat surfaces developed by the first and third authors of this work \cite{daSilvaSPJMS2022}. The Ansatz provided by Theorem \ref{thrABC::SurfCteAngleAreRuledFlat} allows us to establish in Subsect. \ref{subsect::SurfConstAngleM2xR} a classification of surfaces making a constant angle with the vertical real direction $\partial_t$ in a product manifold $M^2\times\mathbb{R}$.

\begin{abctheorem}
    A surface $\Sigma^2\subset M^2\times\mathbb{R}$ makes a constant angle $\theta$ with $\partial_t$ if, and only if, $\Sigma^2$ is locally parametrized as
    \[
        X(u,v) = \left(\exp^M_{\alpha(u)}\Big(v\cos\theta\,J\big(\alpha'(u)\big)\Big),v\sin\theta\right),
    \]
    where $\alpha:I\to M^2$ is a unit speed curve, $\exp^M$ is the exponential map in $M^2$, and $J$ is a $\frac{\pi}{2}$-rotation on the tangent planes of $M^2$.
\end{abctheorem}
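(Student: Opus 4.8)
The plan is to exploit two structural features of $M^2\times\mathbb{R}$. First, the vertical field $\partial_t$ is globally parallel, $\bnabla_X\partial_t=0$ for every $X$; hence the constant-angle theory applies with axis $V=\partial_t$, and by Theorem \ref{thrABC::SurfCteAngleAreRuledFlat} the surface $\Sigma^2$ is already known to be extrinsically flat and ruled. Second, geodesics of the product split: the geodesic from $(p,t_0)$ with velocity $(w,c)\in T_pM^2\oplus\mathbb{R}\partial_t$ is $s\mapsto(\exp^M_p(sw),t_0+sc)$. My strategy will be to identify the rulings of $\Sigma^2$ as ambient geodesics, to read off a unit-speed directrix $\alpha$ in a horizontal slice, and to recover the stated formula from the geodesic splitting. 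Everything is local, so I will translate vertically and restrict to a neighborhood on which the relevant maps are diffeomorphisms.

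For the direct implication I would write the constant-angle condition as $\partial_t=\sin\theta\,\xi+\cos\theta\,n$, with $\xi$ a unit tangent field and $n$ the unit normal, and differentiate along $\Sigma^2$. Using $\bnabla\partial_t=0$ with the Gauss formula $\bnabla_Y\xi=\snabla_Y\xi+\mathrm{II}(Y,\xi)\,n$ and the Weingarten formula $\bnabla_Y n=-S(Y)$, the constancy of $\theta$ forces the normal part to vanish, $\mathrm{II}(Y,\xi)=0$ for all $Y$ (so $S\xi=0$), and the tangential part to give $\snabla_\xi\xi=0$. Thus the integral curves of $\xi$ are geodesics of $\Sigma^2$ of vanishing normal curvature, hence ambient geodesics, and these are the rulings. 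To fix their direction I split the normal as $n=\sin\theta\,\nu+\cos\theta\,\partial_t$ with $\nu\in TM^2$ a horizontal unit vector; solving for $\xi$ gives horizontal part $-\cos\theta\,\nu$ and vertical part $\sin\theta$. The unit tangent field $\zeta$ orthogonal to $\xi$ satisfies $\langle\zeta,\partial_t\rangle=0$, so it is horizontal; its integral curves therefore keep $t$ constant, and after a vertical translation one of them lies in $M^2\times\{0\}$ and projects to a unit-speed curve $\alpha$ with $\alpha'=\zeta$. Since $\zeta\perp\nu$ in $TM^2$, I get $\nu=-J\alpha'$, so the horizontal part of $\xi$ is $\cos\theta\,J\alpha'$; the ruling through $(\alpha(u),0)$ is then the geodesic of velocity $(\cos\theta\,J\alpha'(u),\sin\theta)$, which by the splitting is exactly $X(u,v)$.

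For the converse I would start from $X$ as stated and verify the angle directly. Each $v$-curve is the product of an $M^2$-geodesic of speed $\cos\theta$ with the line $v\mapsto v\sin\theta$, so it is an ambient geodesic with $|\partial_v X|=1$ and $\langle\partial_v X,\partial_t\rangle=\sin\theta$; and since the height $v\sin\theta$ does not depend on $u$, the field $\partial_u X$ is purely horizontal, giving $\langle\partial_u X,\partial_t\rangle=0$. Writing $\partial_t^\top=a\,\partial_u X+b\,\partial_v X$ and setting $E=|\partial_u X|^2$, $F=\langle\partial_u X,\partial_v X\rangle$, a one-line computation gives $|\partial_t^\top|^2=b\sin\theta$ with $b=\sin\theta\,E/(E-F^2)$, so $\langle n,\partial_t\rangle^2=1-|\partial_t^\top|^2=\cos^2\theta$ exactly when $F\equiv0$. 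To close, I note that along each ruling $\partial_v F=\langle\bnabla_{\partial_u}\partial_v X,\partial_v X\rangle=\frac{1}{2}\,\partial_u|\partial_v X|^2=0$ (using $[\partial_u,\partial_v]=0$ and $\bnabla_{\partial_v}\partial_v X=0$), while $F|_{v=0}=\cos\theta\,\langle\alpha',J\alpha'\rangle=0$; hence $F\equiv0$ and $\Sigma^2$ makes the constant angle $\theta$ with $\partial_t$.

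The main obstacle will be twofold. On the direct side it is the identification step: proving that $\xi$ integrates to \emph{ambient} (not merely intrinsic) geodesics via $S\xi=0$ and $\snabla_\xi\xi=0$, and correctly fixing the orientation of the $\frac{\pi}{2}$-rotation so that the horizontal ruling direction is $\cos\theta\,J\alpha'$ rather than its reflection. On the converse side it is upgrading the angle identity from $v=0$ to all $v$, which I have reduced to the single assertion $F\equiv0$. The overarching difficulty, and what distinguishes this from the $\mathbb{S}^2\times\mathbb{R}$ and $\mathbb{H}^2\times\mathbb{R}$ cases, is that for a general base $M^2$ there are no explicit coordinates in which to integrate the immersion, so every step must be phrased invariantly through $\exp^M$ and the splitting of geodesics.
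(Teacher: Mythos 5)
Your proof is correct, and it reaches the paper's parametrization by a route that differs in its middle steps even though the skeleton (constant angle $\Rightarrow$ ruled; geodesics of $M^2\times\mathbb{R}$ split; converse by computing the normal) is the same. The paper first proves a general lemma parametrizing \emph{any} ruled surface in $M^2\times\mathbb{R}$ by semi-geodesic coordinates of the base with an undetermined vertical component $t(u,v)=a(u)v+b(u)$, and then uses the angle condition twice to solve $a=\tan\theta$ and $b=\mathrm{const}$; you bypass that lemma entirely by solving for the ruling direction $\xi=\mathbf{e}_1$ algebraically from the two decompositions $\partial_t=\sin\theta\,\xi+\cos\theta\,n$ and $n=\sin\theta\,\nu_h+\cos\theta\,\partial_t$, which hands you the horizontal part $\cos\theta\,J\alpha'$ and the vertical part $\sin\theta$ simultaneously, and by choosing the directrix as an integral curve of $\mathbf{e}_2$ (automatically horizontal, since $\langle\mathbf{e}_2,\partial_t\rangle=0$), so the functions $a,b$ never appear. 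For the converse, the paper writes down the unit normal explicitly and implicitly relies on the orthogonality of the semi-geodesic coordinates $\langle x_u,x_v\rangle=0$; your $F\equiv 0$ computation (with $\partial_v F=0$ along the geodesic rulings and $F|_{v=0}=0$) is precisely a proof of that Gauss-lemma fact, so nothing is assumed. Two small points to tidy: your direct implication divides by $\sin\theta$, so the horizontal case $\theta=0$ (where $\Sigma^2$ is a slice $M^2\times\{t_0\}$ and $\mathbf{e}_1$ is undetermined) needs to be treated separately, a degeneracy the paper also glosses over; and you should say a word on why the flow-out of the directrix along the geodesic rulings locally covers $\Sigma^2$, which is the standard transversality of $\xi$ and $\zeta$.
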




\section{Curves of constant angle}
\label{sect::CurvesConstAngle}

In Euclidean space, the notion of constant direction is based on the fact that any two vectors at distinct locations can be compared by parallel transport. (Indeed, flat manifolds are characterized by the path-independence property of parallel transport.) However, the same can not be done for a generic 3D manifold. Thus, if we want to study curves or surfaces based on the angle they make with a ``fixed direction", an alternative is to demand parallel transport only along the curve or the surface. 

\begin{definition}
Let $M^3$ be a Riemannian manifold, $\gamma:I\to M^3$ be a regular curve, and $U$ be an arbitrary unit vector field along $\gamma$. The \emph{parallel angle} of $U$ is the angle between $U$ and a unit length parallel transported vector field $V$ along $\gamma$.  We say that the curve $\gamma$ is a
\begin{enumerate}[(a)]
    \item  \emph{parallel generalized helix} if its tangent has constant parallel angle with $V$;
    \item \emph{parallel slant helix} if its principal normal has constant parallel angle with $V$.
\end{enumerate}
In both cases, the vector field $V$ is said to be the \emph{axis} of the helix.
\end{definition}

The use of parallel transported vector fields gives rise to a quite flexible theory, as illustrated in Theorems \ref{thr::CharactParallelGenHelices} and \ref{thr::CharactParallelSlaHelices} in this section.

\subsection{Generalized parallel helices}

The classical Lancret Theorem asserts that generalized helices in Euclidean space are characterized by a constant ratio $\tau / \kappa$ of torsion to curvature. (See Theorem 15.1 of Ref. \cite{Kreyszig1991}.) We obtain a generalization of Lancret's theorem for parallel generalized helices on a generic Riemannian manifold. (See also Ref. \cite{Hayden1931helicesRiemannianManifolds}.)

\begin{theorem}\label{thr::CharactParallelGenHelices}
Let $\gamma: I\rightarrow M^{3}$ be a regular curve with curvature $\kappa>0$. Then, $\gamma$ is a parallel generalized helix if, and only if, the ratio $\tau / \kappa$ of torsion to curvature is constant. In addition, $\tau / \kappa=\cot(\theta)$ and the axis of the generalized helix is given by the vector field $V=\cos(\theta) T+\sin(\theta) B$, where $\{T,N,B\}$ is the Frenet frame of $\gamma$.
\end{theorem}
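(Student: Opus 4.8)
The plan is to work with the Frenet frame $\{T,N,B\}$ of $\gamma$ and exploit the fact that parallel transport preserves inner products. Since $V$ is parallel transported, $\bar\nabla_T V = 0$ along $\gamma$, and since both $V$ and $T$ are unit, the parallel angle condition for a generalized helix says $\langle T, V\rangle = \cos\theta$ is constant. The strategy is to differentiate this inner product along $\gamma$ and repeatedly use the Frenet equations $\bar\nabla_T T = \kappa N$, $\bar\nabla_T N = -\kappa T + \tau B$, $\bar\nabla_T B = -\tau N$ together with $\bar\nabla_T V = 0$ to extract algebraic relations among the coefficients of $V$ in the Frenet frame.

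First I would expand $V = aT + bN + cB$ with $a^2+b^2+c^2=1$, where $a,b,c$ are functions of arclength $s$. Differentiating each coefficient relation $a=\langle V,T\rangle$, $b=\langle V,N\rangle$, $c=\langle V,B\rangle$ and using $\bar\nabla_T V=0$ gives $a' = \langle V, \bar\nabla_T T\rangle = \kappa\,b$, $b' = \langle V, \bar\nabla_T N\rangle = -\kappa\,a + \tau\,c$, and $c' = \langle V,\bar\nabla_T B\rangle = -\tau\,b$. This is the key system. For the forward direction, the generalized helix hypothesis is $a = \cos\theta$ constant, so $a'=0$ forces $\kappa b = 0$; since $\kappa>0$ this gives $b=0$, hence $b'=0$ yields $\tau c = \kappa a$, i.e. $\tau c = \kappa\cos\theta$. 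Combined with $c'=-\tau b=0$ (so $c$ is constant) and the normalization $a^2+c^2=1$ giving $c=\pm\sin\theta$, one reads off $\tau/\kappa = \cos\theta/c = \cot\theta$, constant, and $V=\cos\theta\,T+\sin\theta\,B$ after fixing the sign by orienting $\theta$.

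For the converse, I would assume $\tau/\kappa=\cot\theta$ is constant and construct the axis explicitly. I would \emph{define} $V=\cos\theta\,T+\sin\theta\,B$ and verify directly, using the Frenet equations, that $\bar\nabla_T V = (\cos\theta\,\kappa - \sin\theta\,\tau)N = \kappa(\cos\theta - \sin\theta\cot\theta)N = 0$, so that this $V$ is genuinely parallel transported and unit; since $\langle T,V\rangle=\cos\theta$ is then automatically constant, $\gamma$ is a parallel generalized helix. I expect the main subtlety, rather than a computational obstacle, to be organizing the equivalence cleanly: the one delicate point is justifying that the candidate $V$ is the \emph{unique} (up to sign) parallel axis and that the constant-ratio condition is exactly what makes the normal component of $\bar\nabla_T V$ vanish, which is where the curvature $\kappa>0$ hypothesis is essential to divide and conclude $b=0$. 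No curvature-tensor or ambient-specific input is needed here, which is precisely why the theorem holds on an arbitrary $M^3$; that observation should be highlighted as the reason the Euclidean Lancret theorem generalizes verbatim.
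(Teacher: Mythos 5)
Your proof is correct and follows essentially the same route as the paper's: differentiating the Frenet-frame components of the parallel field $V$ along $\gamma$ to get $\langle N,V\rangle=0$, constancy of the tangential and binormal components, and the relation $\tau c=\kappa a$, then defining $V=\cos\theta\,T+\sin\theta\,B$ explicitly for the converse. Your packaging of the computation as the linear system $a'=\kappa b$, $b'=-\kappa a+\tau c$, $c'=-\tau b$ is a mild reorganization, not a different argument.
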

\begin{proof}
(An alternative proof is provided at the end of this section. This second proof will give a geometric interpretation for $\tau/\kappa$.)

Suppose $\gamma$ is a parallel generalized helix with axis $V$ and Frenet frame $\{T,N,B\}$. We may assume that $V$ has a unit length. Thus, $\nabla_TV=0$ and $\langle T, V \rangle = c_{1}$, where $c_{1}$ is a constant. Taking the derivative of $\langle T, V \rangle=c_1$ gives
\begin{equation}
    0 = \langle \nabla_{T}T, V \rangle + \langle V, \nabla_{T}V \rangle = \kappa \langle N, V\rangle.
\end{equation}
Since $\kappa>0$, we conclude that $\langle N, V\rangle=0$. Now, the derivative of $\langle B,V\rangle$ gives
\begin{equation}
    T\langle B, V\rangle = \langle \nabla_{T}B, V\rangle + \langle B, \nabla_{T}V\rangle = -\tau\langle N, V \rangle = 0,
\end{equation}
from which we conclude that $\langle B,V\rangle=c_2$ must be constant. Finally, the derivative of $\langle N,V\rangle=0$ gives
\begin{equation}
    0 = \langle \nabla_{T}N, V \rangle + \langle N, \nabla_{T}V\rangle = \langle -\kappa T + \tau B, V \rangle  = - c_{1}\kappa + c_2\tau. 
\end{equation}
In conclusion, $\tau/\kappa$ must be constant. In addition, since $V=c_1T+c_2B$ is a unit vector field, we may write $c_1=\cos\theta$ and $c_2=\sin\theta$, where $\theta$ is the angle between $T$ and $V$. Therefore, if $\gamma$ is a parallel generalized helix, then $\tau/\kappa=\cot\theta$. 

Conversely, let $\gamma$ be a curve with the property that $\tau = (\cot\theta) \kappa$, where $\theta$ is a constant. From Theorem 3.6 of Ref. \cite{CastrillonLopezAM2015}, we can always guarantee the existence of a unique curve with prescribed $\kappa,\tau$ and initial conditions $\{T(s_0),N(s_0),B(s_0)\}$. Now, define the following vector field $V$ along a curve $\gamma$ with curvature $\kappa$ and torsion $\tau = (\cot\theta) \kappa$
\begin{equation*}
    V = \cos\theta\,T+\sin\theta\,B.
\end{equation*}
Then, the derivative of $V$ is
\begin{equation*}
    \nabla_{T}V = \cos\theta\,\nabla_{T}T + \sin\theta\,\nabla_{T}B = (\kappa\cos\theta-\tau\sin\theta)\, N = 0.
\end{equation*}
Thus, $V$ is parallel transported along $\gamma$ and, in addition, 
\begin{equation*}
    \langle T, V\rangle = \cos\theta=\mbox{constant.}
\end{equation*}
In short, $\gamma$ is a parallel generalized helix making a constant parallel angle $\theta$ with $V$.
\end{proof}

\begin{example}[Helices in Product Manifolds]
    A characterization of helices in $\mathbb{S}^2\times\mathbb{R}$ with axis given by the real direction $\partial_t$ was provided by Nistor \cite{nistor2017constantangleS2xR}. (Note that $\partial_t$ is parallel transported along any curve in $\mathbb{S}^2\times\mathbb{R}$.) Her Theorem 1 provides an explicit parametrization of such curves. Thus, the theorem provides a coordinate-dependent characterization of parallel generalized helices with axis $\partial_t$. In addition, it is mentioned in Ref. \cite{nistor2017constantangleS2xR} that such curves are also generalized helices, meaning the ratio of torsion to curvature is constant. However, note that $\tau/\kappa=\mathrm{const.}$ does not only characterize generalized helices with axis $\partial_t$.
\end{example}

The Frenet Theorem fails in a generic manifold, i.e., two curves with the same curvature and torsion may not be related by a rigid motion of $M^3$. Indeed, the validity of the Frenet Theorem is a characteristic property of manifolds of constant curvature \cite{CastrillonLopezAM2015,CastrillonLopezDGA2014}. The existence of a curve with prescribed torsion and curvature still holds, but the initial conditions $\{T(s_0),N(s_0),B(s_0)\}$ must also be prescribed. Thus, once we fix an initial point $p=\gamma(s_0)$, a generalized helix depends on the prescription of the initial orientation of the curve $\{T(s_0),N(s_0),B(s_0)\}$ and the ratio $\tau/\kappa$. In other words, one real function and four real constants determine the family of parallel generalized helices. (The function can be either $\kappa$ or $\tau$, while two of the constants give $T(s_0)$, another gives $N(s_0)$ once we fix $T(s_0)$, and the remaining constant gives $\theta$.)

\subsection{Parallel slant helices}

Izumiya and Takeuchi \cite{IzumiyaTJM2004} introduced slant helices in Euclidean space. The constancy of a certain function of the curvature and torsion characterizes these curves. The class of slant helices can be generalized to a generic Riemannian manifold. We have the following characterization.

\begin{theorem}\label{thr::CharactParallelSlaHelices}
Let $\gamma: I\rightarrow M^{3}$ be a regular curve with curvature $\kappa>0$. Then, $\gamma$ is a parallel slant  helix if, and only if, 
\begin{eqnarray}\label{eq::SigmaSlantHelices}
 \sigma(s) = \left[\frac{\kappa^2}{(\kappa^2+\tau^2)^{\frac{3}{2}}}\frac{\rmd }{\rmd s}\left(\frac{\tau}{\kappa}\right)\right](s)
\end{eqnarray}
is a constant function. In addition, the axis of the slant helix is given by the vector field $V=\cos(\theta) N+\sin(\theta) D$, where $\sigma=\cot(\theta)$ and $D=(\tau T+\kappa B)/\sqrt{\tau^2+\kappa^2}$ is the normalized Darboux vector field of $\gamma$.
\end{theorem}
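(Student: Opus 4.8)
The plan is to work in the orthonormal frame $\{N,D,D^{\perp}\}$ adapted to the Darboux vector, mirroring the strategy used for Theorem~\ref{thr::CharactParallelGenHelices}. Here $\omega=\sqrt{\kappa^2+\tau^2}>0$, the field $D=(\tau T+\kappa B)/\omega$ is the normalized Darboux vector, and $D^{\perp}=(-\kappa T+\tau B)/\omega$ is the unit vector completing $\{N,D\}$ to a positively oriented frame; note that $\{N,D,D^{\perp}\}$ is orthonormal at each point since $D,D^{\perp}$ span the $T$--$B$ plane. Since $\kappa>0$, I may set $\cos\phi=\tau/\omega$ and $\sin\phi=\kappa/\omega$, so that $\cot\phi=\tau/\kappa$ and $\phi\in(0,\pi)$ is a smooth function of $s$. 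The key preliminary step is to differentiate this frame along $\gamma$. Because we differentiate only in the direction $T$, only $\nabla_T$ is involved and the curvature of $M^3$ never enters; the computation is thus formally identical to the Euclidean one. Applying the Frenet equations $\nabla_T T=\kappa N$, $\nabla_T N=-\kappa T+\tau B$, $\nabla_T B=-\tau N$ yields
\[
  \nabla_T N=\omega\,D^{\perp},\qquad
  \nabla_T D=\phi'\,D^{\perp},\qquad
  \nabla_T D^{\perp}=-\omega\,N-\phi'\,D .
\]
The decisive feature is that the coefficient coupling $N$ to $D$ vanishes. Differentiating $\cot\phi=\tau/\kappa$ gives $\phi'=-(\kappa^2/\omega^2)\,\frac{\rmd}{\rmd s}(\tau/\kappa)$, hence the bridge to the statement, $\sigma=-\phi'/\omega$.

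For the forward implication, I expand the unit axis as $V=a\,N+b\,D+c\,D^{\perp}$ and impose $\nabla_T V=0$. Reading off the three components against the orthonormal frame produces
\[
  a'=c\,\omega,\qquad b'=c\,\phi',\qquad a\,\omega+b\,\phi'+c'=0 .
\]
The slant condition is exactly that $\langle N,V\rangle=a$ is constant, so $a'=0$; as $\omega>0$, the first equation forces $c\equiv0$. Substituting back, the second equation gives $b=\mathrm{const}$, and the third reduces to $a\,\omega+b\,\phi'=0$, i.e. $\sigma=-\phi'/\omega=a/b$ is constant. Writing $a=\cos\theta$ and $b=\sin\theta$, which is possible because $a^2+b^2=|V|^2=1$, recovers $\sigma=\cot\theta$ and $V=\cos\theta\,N+\sin\theta\,D$.

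For the converse, I am handed a curve with $\sigma$ constant, so no existence theorem is needed: I simply choose $\theta$ with $\cot\theta=\sigma$ and set $V=\cos\theta\,N+\sin\theta\,D$, a unit field. The structure equations give
\[
  \nabla_T V=(\omega\cos\theta+\phi'\sin\theta)\,D^{\perp}
           =\sin\theta\,(\omega\,\sigma+\phi')\,D^{\perp}=0,
\]
since $\omega\,\sigma=-\phi'$; thus $V$ is parallel transported, while $\langle N,V\rangle=\cos\theta$ is constant, so $\gamma$ is a parallel slant helix with axis $V$.

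I expect the only genuine obstacle to be bookkeeping: deriving the three structure equations correctly and, above all, pinning down the identity $\sigma=-\phi'/\omega$, which is what links the chosen frame to the quantity appearing in the statement. Once the vanishing $N$--$D$ coupling and this identity are in hand, the parallel-transport system collapses immediately and both directions follow. Conceptually, the point worth stressing is that---unlike for surfaces---a single curve carries no integrability obstruction from the ambient curvature, which is precisely why the characterization is word-for-word the Euclidean one.
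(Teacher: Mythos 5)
Your proof is correct and follows essentially the same route as the paper: both decompose the axis in the frame $\{N,D,D^{\perp}\}$ with $D^{\perp}=(-\kappa T+\tau B)/\omega$, use the constancy of $\langle N,V\rangle$ together with $\nabla_TV=0$ to kill the $D^{\perp}$-component, and reduce parallel transport to the single scalar identity $\sigma=\cot\theta$; your angle $\phi$ with $\phi'=-\omega\sigma$ is just a repackaging of the paper's direct differentiation of $\tau/\omega$ and $\kappa/\omega$ in Eq.~\eqref{eq::NablaTofParSlantAxis}. (A purely cosmetic remark: with your conventions $\langle N, D\times D^{\perp}\rangle=-1$, so the frame is in fact negatively oriented, but orientation plays no role in the argument.)
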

\begin{proof}
(An alternative proof is provided at the end of this section. This second proof will give a geometric interpretation for $\sigma$.)

Suppose $\gamma$ is a parallel slant  helix with axis $V$, i.e., $\langle V, V\rangle = 1$, $\nabla_TV=0$, and $\langle N, V \rangle = \cos\theta$, where $\theta$ is constant. Taking the derivative of $\langle N, V \rangle$ gives
\begin{equation*}
    0 = \langle \nabla_{T}N, V \rangle + \langle V, \nabla_{T}V \rangle = - \kappa \langle T, V \rangle + \tau \langle B, V \rangle=\langle - \kappa T+\tau B, V \rangle.
\end{equation*}
Then, we conclude that $V\in\mbox{span}\{-\kappa T+\tau B\}^{\perp}$ and, therefore, we can write
\begin{equation}\label{eq::ParallelSlantAxis}
 V = \cos\theta\, N+\sin\theta\,D, \quad D = \frac{\tau T+\kappa B}{\sqrt{\kappa^2+\tau^2}}. 
\end{equation}
Now, the derivative of $V$ in the direction of $T$ gives
\begin{eqnarray}
  \nabla_TV & = &  \cos\theta(-\kappa T + \tau B) - \sin\theta\frac{\kappa\kappa' + \tau\tau' }{(\kappa^{2}+\tau^{2})^{\frac{3}{2}}}(\tau T+\kappa B) +  \frac{\sin\theta}{\sqrt{\kappa^{2}+\tau^{2}}}(\tau' T + \kappa'B) \nonumber \\
            & = & \cos\theta(-\kappa T + \tau B) - \frac{\kappa^{2}}{(\kappa^{2}+\tau^{2})^{\frac{3}{2}}}\left(\frac{\tau}{\kappa}\right)'\sin\theta (-\kappa T + \tau B) \nonumber \\
            & = & (\cos\theta-\sigma\,\sin\theta)(-\kappa T+\tau B).\label{eq::NablaTofParSlantAxis}
\end{eqnarray}
This equation shows that $\nabla_TV=0$ if, and only if, $\sigma=\cot\theta$ is constant.

Conversely, let $\sigma$ be constant. Define $V$ as in Eq. \eqref{eq::ParallelSlantAxis}, where $\theta$ is the constant such that $\cot\theta=\sigma$. (From Theorem 3.6 of Ref. \cite{CastrillonLopezAM2015}, we can always guarantee the existence of a unique curve with initial conditions $\{T(s_0),N(s_0),B(s_0)\}$ and prescribed $\kappa,\tau$ satisfying $\sigma'(s)=0$. See also the discussion leading to Eq. \eqref{eq::TorsionOfSlantHelixGivenTheCurvature}.) Finally, similar computations as those leading to Eq. \eqref{eq::NablaTofParSlantAxis} give that $\nabla_TV=0$ and, consequently, $\gamma$ is a parallel slant helix with axis $V$.
\end{proof}

To characterize a slant helix, we need to prescribe the initial orientation of the curve, i.e., $\{T(s_0),N(s_0),B(s_0)\}$ at some point $p=\gamma(s_0)$, the constant $\theta$, and one real function, e.g., the curvature. Indeed, if a real function $\bar{\kappa}(u)>0$ is the curvature of a slant helix, then the torsion $\tau$ must be
\begin{equation}\label{eq::TorsionOfSlantHelixGivenTheCurvature}
 \tau = \pm \,\frac{\bar{\kappa}(\sigma K-c_0)}{\sqrt{1-(\sigma K-c_0)^2}}; \quad \vert c_0\vert<1, \quad K(u) = \int_{u_0}^u\bar{\kappa}(x)\mathrm{d}x. 
\end{equation}
Therefore, once we fix the initial point $p=\gamma(u_0)$, the members of the family of slant helices appear in pairs, the $\pm1$ in $\tau$. The family is parametrized by one real function and five real constants: one of the constants being the angle $\theta$, three others determining the orientation of the Frenet frame at $p$, and the remaining one determining the torsion at $u_0$, i.e., $c_0$. Note that a slant helix is guaranteed to exist on an interval $(u_m,u_M)$, where $u_m<u_0$ and $u_M>u_0$ are given by the condition $\sigma K(u^*)-c_0=\pm1$. (The values of $u_m,u_M$ may be infinite.)

We now investigate the geometric meaning of the constant $\sigma$. For slant helices in $\mathbb{E}^3$, Izumiya and Takeuchi \cite{IzumiyaTJM2004} indicated that characterization can be provided by noticing that $\gamma:I\to\mathbb{E}^3$ is a slant helix if, and only if, the \emph{normal indicatrix}, i.e., the principal normal seen as a curve $N:I\to\mathbb{S}^2$, is a small circle or, equivalently, if and only if $N:I\to\mathbb{S}^2$ has constant geodesic curvature. 

Given a unit speed curve $\alpha:I\to\mathbb{S}^2$, we can decompose its acceleration vector as $\ddot{\alpha}=(\ddot{\alpha}\cdot\alpha)\alpha+(\ddot{\alpha}\cdot\alpha\times\dot{\alpha})\alpha\times\dot{\alpha}$ and, therefore, its geodesic curvature is $\kappa_g=\alpha\cdot\dot{\alpha}\times\ddot{\alpha}$, where derivatives with respect to the arc-length $s_{\alpha}$ are denoted by a dot. Under a reparametrization $s_{\alpha}\mapsto t(s_{\alpha})$ of $\alpha$, we obtain $\kappa_g(t)=
[v^{-3}\alpha\cdot\alpha'\times\alpha''](t)$, where $v=\frac{\rmd s_{\alpha}}{\rmd t}=\Vert\frac{\rmd\alpha}{\rmd t}\Vert$ (notice $\frac{\rmd}{\rmd s_{\alpha}}=\frac{1}{v}\frac{\rmd}{\rmd t}$). Now, for the normal indicatrix, with arc-length parameter $s_{n}$, we have $v=\Vert\frac{\rmd N}{\rmd s}\Vert=\sqrt{\kappa^2+\tau^2}$. Thus, the geodesic curvature $\kappa_{g,n}$, under the change $s_n\mapsto s$, is
\begin{equation}\label{eq::GeodCurvNormalIndicatrix}
 \kappa_{g,n}(s)   = \frac{1}{(\kappa^2+\tau^2)^{\frac{3}{2}}}N\cdot\frac{\rmd N}{\rmd s}\times\frac{\rmd^2N}{\rmd s^2} = \frac{\tau'\kappa-\tau\kappa'}{(\kappa^2+\tau^2)^{\frac{3}{2}}}=\frac{\kappa^2}{(\kappa^2+\tau^2)^{\frac{3}{2}}}\frac{\rmd}{\rmd s}\left(\frac{\tau}{\kappa}\right). 
\end{equation}
Therefore, we can identify $\sigma=\kappa_{g,n}$. It follows that $\gamma$ is a slant helix if, and only if, $\sigma$ is constant. 

Finally, to relate $\sigma$ to the angle of the slant helix, we proceed as follows. A small circle  of $\mathbb{S}^2$ making an angle $\theta$ with its axis is a circle of radius $r=\sin\theta$ and, therefore, its geodesic curvature satisfies $\kappa_g^2=\kappa^2-\kappa_n^2=\frac{1}{\sin^2\theta}-1=\cot^2\theta$. Once again, we see that $\sigma$ is geometrically given by the cotangent of the constant angle $\theta\in[0,\frac{\pi}{2}]$. 

\begin{remark}\label{rem::AltenartiveCharGenHelicesUsingIndicatrices}
  A similar reasoning when applied to the \emph{tangent} and \emph{binormal indicatrices} $T:I\to\mathbb{S}^2$ and $B:I\to\mathbb{S}^2$ of $\alpha:I\to\mathbb{E}^3$, respectively, gives
\begin{equation}
    \kappa_{g,t}(s) = \frac{\tau}{\kappa}\mbox{ and }\kappa_{g,b}(s) = \frac{\kappa}{\vert\tau\vert}.
\end{equation}
Therefore, since $T:I\to\mathbb{S}^2$ has to be a small circle for a generalized helix, we conclude that a curve is a generalized helix if, and only if, $\kappa/\tau$ is constant. In addition, making a constant angle with the binormal is equivalent to making a constant angle with the unit tangent.  
\end{remark}

We can generalize the construction of tangent and normal indicatrices to a generic Riemannian manifold and provide another proof for characterizing parallel generalized and slant helices.

\begin{proof}[Alternative proof of Theorems \ref{thr::CharactParallelGenHelices} and \ref{thr::CharactParallelSlaHelices}.]
    We may construct the tangent, normal, and binormal indicatrices of curves in a Riemannian manifold $M^3$  with the help of parallel transport. (As indicated in Remark \ref{rem::AltenartiveCharGenHelicesUsingIndicatrices}, the tangent and normal indicatrices can be used to characterize generalized and slant helices, respectively.)
    
    Let us find the normal indicatrix. (The reasoning for the tangent indicatrix being analogous.) At every point of the curve, we have a vector $N$ on a unit sphere. However, the principal normal vectors at distinct points are not members of the same sphere. To circumvent this problem, first fix a point on the curve $p=\gamma(s_0)$ and compute the map $s\mapsto P_{s_0}(N(s))$ defined as the parallel transport of $N(s)$ along $\gamma$ from $\gamma(s)$ to $p$. Through this identification, we define the normal indicatrix as the curve $s\mapsto P_{s_0}(N(s))$ in $\mathbb{S}^2\subset T_{p}M^3$. Since parallel transport preserves angles, we can use the reasoning leading to Eq. \eqref{eq::GeodCurvNormalIndicatrix} to characterize parallel slant helices in $M^3$.
\end{proof}

\section{Helices and geodesics of cylinders}
\label{Sect::HelicesAndCylinders}

In Euclidean space, generalized helices are geodesics of right cylinders, and the cylinder's axis coincides with the curve's axis \cite{Kreyszig1991}. We may ask whether a similar characterization applies to helices in a generic manifold. 

Let $\beta:I\to M^3$ be a regular curve and $V$ a unit vector field parallel transported along $\beta$. The \emph{cylinder with basis $\beta$ and rulings parallel to} $V$ is defined as the surface
\begin{equation}
    \mathcal{C}_{\beta,V}(u,v) = \exp_{\beta(u)}(vV(u)).
\end{equation}
If $\beta'$ and $V$ are linearly independent, then $\mathcal{C}_{\beta,V}$ is a regular surface on a neighborhood of $\beta$. 

We have the following partial result concerning the relation between helices and cylinders.

\begin{proposition}\label{Prop::ParallelHelixIsGeodesiOfItsCylinder}
  If $\gamma:I\to M^3$ is a non-trivial generalized helix with axis $V$, i.e., $\gamma'$ and $V$ are linearly independent, then $\gamma$ is a geodesic of the cylinder $\mathcal{C}_{\gamma,V}$.
\end{proposition}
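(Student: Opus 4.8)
The plan is to invoke the standard characterization that a curve lying on a surface $\Sigma^2\subset M^3$ is a geodesic of $\Sigma^2$ if, and only if, its ambient acceleration $\nabla_{\gamma'}\gamma'$ is everywhere normal to $\Sigma^2$. This follows from the Gauss formula $\nabla_{\gamma'}\gamma' = \snabla_{\gamma'}\gamma' + \mathrm{II}(\gamma',\gamma')$, where $\snabla$ is the Levi-Civita connection of the induced metric and $\mathrm{II}$ the second fundamental form. Thus the whole argument reduces to identifying the tangent plane of $\mathcal{C}_{\gamma,V}$ along $\gamma$ and checking that $\nabla_{\gamma'}\gamma'$ has no component in it.

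First I would parametrize $\gamma$ by arc length, so that $\gamma'=T$ and $\nabla_TT=\kappa N$ by the Frenet equations, and observe that $\gamma$ sits inside the cylinder as the coordinate curve $v=0$, since $X(u,0)=\exp_{\gamma(u)}(0)=\gamma(u)$. Next I would compute the coordinate vector fields of $X$ restricted to $v=0$. One finds $\frac{\partial X}{\partial u}\big|_{v=0}=\gamma'(u)=T$, while $\frac{\partial X}{\partial v}\big|_{v=0}=V(u)$, because $v\mapsto\exp_{\gamma(u)}(vV(u))$ is, by definition of the exponential map, the geodesic with initial velocity $V(u)$. Hence the tangent plane of $\mathcal{C}_{\gamma,V}$ along $\gamma$ equals $\mathrm{span}\{T,V\}$; the linear independence of $\gamma'$ and $V$ guaranteed by the non-triviality hypothesis is exactly what makes the surface regular there.

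The decisive step is to apply Theorem \ref{thr::CharactParallelGenHelices}: since $\gamma$ is a generalized helix with axis $V$, we have $V=\cos\theta\,T+\sin\theta\,B$, and non-triviality forces $\sin\theta\neq 0$, so $\mathrm{span}\{T,V\}=\mathrm{span}\{T,B\}$. It then suffices to note that $\nabla_{\gamma'}\gamma'=\kappa N$ is orthogonal to both $T$ and $B$, because $\{T,N,B\}$ is an orthonormal Frenet frame. Therefore $\nabla_{\gamma'}\gamma'$ is normal to $\mathcal{C}_{\gamma,V}$ along $\gamma$, and $\gamma$ is a geodesic.

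I do not expect a genuine obstacle in this argument. The only points requiring a little care are the computation of $\frac{\partial X}{\partial v}\big|_{v=0}$, which rests on the defining property of $\exp$, and the implicit appeal to the regularity of $\mathcal{C}_{\gamma,V}$ near $\gamma$, which is precisely the content of the non-triviality assumption. The essential mechanism is that the helix condition aligns the ruling direction $V$ with the rectifying plane $\mathrm{span}\{T,B\}$ of $\gamma$, into which the curvature vector $\kappa N$ never points.
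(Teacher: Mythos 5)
Your proof is correct and follows the same overall skeleton as the paper's: identify the tangent plane of $\mathcal{C}_{\gamma,V}$ along $\gamma$ as $\mathrm{span}\{\gamma',V\}$ and check that $\nabla_{\gamma'}\gamma'$ is orthogonal to it. The one genuine difference is how orthogonality to $V$ is obtained. You route through Theorem \ref{thr::CharactParallelGenHelices} to write $V=\cos\theta\,T+\sin\theta\,B$ and then use orthonormality of the Frenet frame, which buys a concrete geometric picture (the ruling lies in the rectifying plane, into which $\kappa N$ never points) but makes the proposition depend on that theorem and, implicitly, on $\kappa>0$ so that the Frenet frame exists. The paper instead differentiates $\langle\gamma',\gamma'\rangle=1$ and $\langle\gamma',V\rangle=\cos\theta$ directly, using only $\nabla_{\gamma'}V=0$; this is self-contained, needs no Frenet apparatus, and covers points where the curvature might vanish. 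Your argument could be made equally general by replacing the appeal to Theorem \ref{thr::CharactParallelGenHelices} with the one-line computation $0=\gamma'\langle\gamma',V\rangle=\langle\nabla_{\gamma'}\gamma',V\rangle$. Your explicit verification that $\partial X/\partial v\vert_{v=0}=V$ and that non-triviality is exactly the regularity of the cylinder along $\gamma$ is a useful addition that the paper leaves implicit.
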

\begin{proof}  
    If we parametrize $\gamma$ by arc-length, $\langle\gamma',\gamma'\rangle=1$, then $\langle\nabla_{\gamma'}\gamma',\gamma'\rangle=0$. Taking the derivative of $\langle\gamma',V\rangle=\cos\theta$, $\theta$ constant, and using $\nabla_{\gamma'}V=0$ give
    \[
     0 = \langle \nabla_{\gamma'}\gamma',V\rangle+\langle\gamma',\nabla_{\gamma'}V\rangle=\langle\nabla_{\gamma'}\gamma',V\rangle.
    \]
    Thus, $\nabla_{\gamma'}\gamma'$ is a multiple of the unit normal of $\mathcal{C}_{\gamma,V}$, and, therefore, $\gamma$ is a geodesic.
\end{proof} 

Are the geodesics of cylinders also generalized helices? If yes, the natural candidate to play the role of an axis is the extension of $V$ to the remaining points of the cylinder. More precisely, let us extend $V$ by defining the vector field $V(u,v)=P_v{V}(u)$, where $P_vV$ is the parallel transport of $V$ along the geodesic $v\mapsto\exp_{\beta(u)}(vV(u))$ connecting $\mathcal{C}_{\beta,{V}}(u,0)$ to $\mathcal{C}_{\beta,{V}}(u,v)$.


\begin{lemma}
    Let $\gamma:I\to\mathcal{C}_{\beta,V}$ be a geodesic. If $V$ is parallel transported along $\gamma$, i.e., if the extension of $V$ to the remaining points of the cylinder satisfies $\nabla_{\gamma'}V=0$, then $\gamma$ is a generalized helix.
\end{lemma}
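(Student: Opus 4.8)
The goal is to show that $\gamma$ has constant parallel angle with its candidate axis $V$; since the hypothesis already supplies $\nabla_{\gamma'}V=0$, by the definition of a parallel generalized helix it suffices to prove that $\langle\gamma',V\rangle$ is constant. The plan is therefore to parametrize $\gamma$ by arc length and differentiate the inner product
\[
 \frac{\rmd}{\rmd s}\langle\gamma',V\rangle = \langle\nabla_{\gamma'}\gamma',V\rangle + \langle\gamma',\nabla_{\gamma'}V\rangle,
\]
using the hypothesis $\nabla_{\gamma'}V=0$ to kill the second term. What then remains is to show that the first term vanishes as well.

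The key observation I would establish is that the extended field $V$ is tangent to the cylinder $\mathcal{C}_{\beta,V}$; in fact it coincides with the ruling field $\partial\mathcal{C}_{\beta,V}/\partial v$. Indeed, each ruling $v\mapsto\exp_{\beta(u)}(vV(u))$ is a unit-speed geodesic of $M^3$, so its velocity is parallel along itself; since $V$ is extended precisely by parallel transport of the unit initial velocity $V(u)$ along this geodesic, the two fields share the same initial value and satisfy the same parallel transport equation along the ruling, and hence agree. This identifies $V$ with $\partial\mathcal{C}_{\beta,V}/\partial v$, which lies in $T\mathcal{C}_{\beta,V}$.

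With this in hand, the conclusion is immediate: because $\gamma$ is a geodesic of $\mathcal{C}_{\beta,V}$, the ambient acceleration $\nabla_{\gamma'}\gamma'$ is a multiple of the unit normal of the cylinder (exactly as in the proof of Proposition \ref{Prop::ParallelHelixIsGeodesiOfItsCylinder}) and is therefore orthogonal to the tangent field $V$. Thus $\langle\nabla_{\gamma'}\gamma',V\rangle=0$, so $\frac{\rmd}{\rmd s}\langle\gamma',V\rangle=0$ and the parallel angle is constant. Combined with $\nabla_{\gamma'}V=0$, this exhibits $V$ as a parallel transported axis making a constant angle with $T=\gamma'$, whence $\gamma$ is a generalized helix.

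The only step requiring genuine care is the tangency claim $V=\partial\mathcal{C}_{\beta,V}/\partial v$: one must verify that the parallel-transport extension used to define the candidate axis reproduces exactly the ruling direction rather than some other tangent field, which rests on the fact that a geodesic's velocity is self-parallel. Everything else reduces to a one-line differentiation once the orthogonality of $\nabla_{\gamma'}\gamma'$ to $T\mathcal{C}_{\beta,V}$ is invoked.
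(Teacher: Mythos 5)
Your proof is correct and follows essentially the same route as the paper: differentiate $\langle\gamma',V\rangle$, kill one term by the hypothesis $\nabla_{\gamma'}V=0$ and the other by the orthogonality of the geodesic's ambient acceleration to $V$. The one thing you add is an explicit justification (via the identification $V=\partial\mathcal{C}_{\beta,V}/\partial v$ from uniqueness of parallel transport along the ruling geodesics) of why $V$ is tangent to the cylinder, a point the paper's proof leaves implicit; this also subsumes the paper's separate observation that $\langle V,V\rangle$ is constant on the cylinder.
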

\begin{proof}
    First, note that $\langle V,V\rangle$ is constant on $\mathcal{C}_{\beta,V}$. Indeed, $\Vert V(u_1,v_1)\Vert=\Vert V(u_1,0)\Vert$ (parallel transport along the rulings), $\Vert V(u_1,0)\Vert=\Vert V(u_2,0)\Vert$ (transport along $\beta$), and $\Vert V(u_2,0)\Vert=\Vert V(u_2,v_2)\Vert$ (transport along the rulings).
    
    Now, since $\gamma$ is a geodesic, $\nabla_{\gamma'}\gamma'$ is orthogonal to $V$: $\langle \nabla_{\gamma'}\gamma',V\rangle=0$. (Assume $\gamma$ is parametrized by arc length.) Taking the derivative of $\langle\gamma',V\rangle$ gives
    \[
     \gamma'\langle\gamma',V\rangle = \langle\nabla_{\gamma'}\gamma',V\rangle+\langle\gamma',\nabla_{\gamma'}V\rangle = 0+0 =0.
    \]
    Consequently, $\gamma'$ makes a constant angle with $V$. Thus, $\gamma$ is a helix with axis $V$.
\end{proof}

The previous lemma suggests it is enough to prove that $V$ is parallel transported along any curve of $\mathcal{C}_{\beta,V}$. However, demanding the validity of such a property may impose limitations on the geometry of the cylinder. Analogously, requiring the validity of this property on \emph{any} cylinder may also impose limitations on the geometry of the ambient space.

\begin{theorem}\label{thr::WhenCylindricalGeodesicsAreHelices}
    Let $\mathcal{C}_{\beta,V}$ be a cylinder in $M^3$. Then, $V$ is parallel transported along any curve of $\mathcal{C}_{\beta,V}$ if, and only if, $\mathcal{C}_{\beta,V}$ is simultaneously intrinsically and extrinsically flat. In addition, a Riemannian manifold $M^3$ is flat if, and only if, the axis of any cylinder $\mathcal{C}_{\beta,V}\subset M^3$ is parallel transported along any curve of the cylinder.
\end{theorem}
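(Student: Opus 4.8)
The plan is to reduce everything to the single field $\nabla_{X_u}V$, where $X_u,X_v$ denote the coordinate fields of the parametrization $\mathcal{C}_{\beta,V}(u,v)$. The key preliminary observation is that, by construction, the ruling direction coincides with the axis: $X_v=V$, and since the rulings are ambient geodesics we have $\nabla_V V=\nabla_{X_v}V=0$, so in particular the normal curvature $\mathrm{II}(V,V)=\langle\nabla_V V,n\rangle=0$, where $n$ is a unit normal of the cylinder. Because the tangent space of $\mathcal{C}_{\beta,V}$ is spanned by $X_u$ and $X_v=V$, and $V$ is already parallel along the rulings, the field $V$ is parallel along every curve of the cylinder if and only if $\nabla_{X_u}V\equiv 0$. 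Since $[X_u,X_v]=0$ and $\nabla$ is torsion-free, $\nabla_{X_u}V=\nabla_{X_v}X_u$, and I will propagate this field along the rulings through the ambient curvature identity $\nabla_{X_v}(\nabla_{X_u}V)=R(X_v,X_u)V=R(V,X_u)V$, together with the initial condition $\nabla_{X_u}V|_{v=0}=\nabla_{\beta'}V=0$.

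The heart of the first statement is the orthogonal decomposition $\nabla_{X_u}V=\nabla^{\Sigma}_{X_u}V+\mathrm{II}(X_u,V)\,n$ into tangential and normal parts, which I will show match exactly the two flatness conditions. For the normal part, write $X_u=aV+bE$ with $E$ a unit tangent orthogonal to $V$ and $b\neq 0$ (as $\gamma'$ and $V$ are independent); using $\mathrm{II}(V,V)=0$ one gets $\mathrm{II}(X_u,V)=b\,\mathrm{II}(E,V)$ and $\det S=-\mathrm{II}(E,V)^2$, so the normal part vanishes identically if and only if the Gauss--Kronecker curvature $\det S$ vanishes, i.e.\ the cylinder is extrinsically flat. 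For the tangential part, note $\nabla^{\Sigma}_{X_v}V=(\nabla_{X_v}V)^{\top}=0$, so $\nabla^{\Sigma}_{X_u}V\equiv 0$ is precisely the statement that $V$ is parallel for the induced connection in every direction; in dimension two this forces $R^{\Sigma}(\cdot,\cdot)V=0$ and hence the Gaussian (intrinsic) curvature to vanish, while conversely intrinsic flatness, fed with the initial condition $\nabla^{\Sigma}_{X_u}V|_{v=0}=0$ and the evolution $\nabla^{\Sigma}_{X_v}(\nabla^{\Sigma}_{X_u}V)=R^{\Sigma}(X_v,X_u)V=0$, forces $\nabla^{\Sigma}_{X_u}V\equiv 0$. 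Combining both parts yields the equivalence: $V$ is parallel along every curve if and only if $\mathcal{C}_{\beta,V}$ is simultaneously intrinsically and extrinsically flat.

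For the second statement, the forward direction is immediate from the propagation identity: if $M^3$ is flat then $R\equiv 0$, so $\nabla_{X_v}(\nabla_{X_u}V)=R(V,X_u)V=0$, and the vanishing initial data persist along every ruling, giving $\nabla_{X_u}V\equiv 0$ for an arbitrary cylinder. For the converse I will use the first part pointwise: given $p\in M^3$ and an arbitrary $2$-plane $\Pi\subset T_pM^3$, choose a unit $V_0\in\Pi$ and a base curve $\beta$ through $p$ with $\beta'(u_0)\in\Pi$ independent of $V_0$, so that the tangent plane of $\mathcal{C}_{\beta,V}$ at $p$ is exactly $\Pi$. By hypothesis its axis is parallel along all curves, so by the first part the cylinder is intrinsically and extrinsically flat at $p$; the Gauss equation $K_{int}=K^{M}(\Pi)+\det S$ then forces the sectional curvature $K^{M}(\Pi)=0$. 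Since $\Pi$ and $p$ are arbitrary, all sectional curvatures vanish and $M^3$ is flat.

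The step I expect to be the main obstacle is the clean separation of the tangential and normal parts into two independent flatness conditions: specifically, justifying that $\mathrm{II}(V,V)=0$ turns extrinsic flatness into the single scalar condition $\mathrm{II}(E,V)=0$, and that the existence of a connection-parallel unit field on the surface is equivalent (not merely necessary) to intrinsic flatness once the rulings' initial data are supplied. The ambient curvature-propagation identity is what makes both the ``only if $\Rightarrow$ source vanishes'' and the ``flat $\Rightarrow$ field stays parallel'' implications work, so getting its sign right and handling the commutator $[X_u,X_v]=0$ correctly is the crux.
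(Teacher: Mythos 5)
Your proposal is correct and follows essentially the same route as the paper: both rest on splitting $\nabla_{X_u}V$ into its tangential and normal parts, identifying the vanishing of the normal part with extrinsic flatness (via $h(V,V)=0$, so $K_{ext}=0\Leftrightarrow h_{12}=0$) and of the tangential part with intrinsic flatness, and then invoking the Gauss equation $K_{int}-K_{ext}=K_{sec}$ over arbitrary tangent planes for the global statement. If anything, your treatment is more complete: the paper only argues the forward implications in detail and disposes of ``$M^3$ flat $\Rightarrow$ property'' by appealing to the local isometry with $\mathbb{E}^3$, whereas your propagation identity $\nabla_{X_v}\nabla_{X_u}V=R(X_v,X_u)V$ with the initial condition $\nabla_{X_u}V|_{v=0}=\nabla_{\beta'}V=0$ supplies a self-contained proof of both converse directions.
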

\begin{proof}  
    Let $V(u,v)$, the extension of $V(u)$ to the remaining points of the cylinder, be parallel transported along any curve of $\mathcal{C}_{\beta,V}$, i.e., for every $X\in\Gamma(\mathcal{C}_{\beta,V})$, we have $\nabla_XV=0$. Since $\nabla_VV=0$, verifying the property for $X=\partial\mathcal{C}/\partial u\equiv U$ is enough. Let $\nu$ be the unit normal of $\mathcal{C}_{\beta,V}$ and $\nabla^{\mathcal{C}}$ be the induced Levi-Civita connection, then
    \[
     \nabla_UV = \nabla^{\mathcal{C}}_UV + h_{12}\,\nu,
    \]
    where $h_{ij}$ denotes the coefficients of the second fundamental form of $\mathcal{C}_{\beta,V}$. Therefore,
    \[
     \nabla_UV = 0 \Leftrightarrow \nabla^{\mathcal{C}}_UV = 0 \quad \mbox{and} \quad h_{12} = 0.
    \]
    Since $\nabla_VV=0$, we have $h_{22}=0$, from which follows that $K_{ext}=0\Leftrightarrow h_{12}=0$. Thus, $\mathcal{C}_{\beta,V}$ is extrinsically flat. In addition, $\nabla^{\mathcal{C}}_XV = 0$ for any $X\in\Gamma(\mathcal{C}_{\beta,V})$ means $\mathcal{C}_{\beta,V}$ is also intrinsically flat. In short, $V$ is parallel transported along any curve of $\mathcal{C}_{\beta,V}$ if, and only if, $\mathcal{C}_{\beta,V}$ is both intrinsically and extrinsically flat.

   In addition, if the axis of \emph{any} cylinder $\mathcal{C}_{\beta,V}\subset M^3$ is parallel transported along any curve of the cylinder, then the Gauss' Theorem $K_{int}-K_{ext}=K_{sec}$ implies all sectional curvatures vanish ($V$ and $\beta$ are arbitrary). Thus, we finally deduce that $M^3$ must be flat. 
   
   Conversely, if $M^3$ is flat, it is locally isometric to the Euclidean space $\mathbb{E}^3$. The fact that $V$ is parallel transported along any cylindrical curve in Euclidean space is a well-known property.
\end{proof}

\begin{example}
     Vertical cylinders of $\mathbb{S}^2\times\mathbb{R}$ are intrinsically and extrinsically flat and, consequently, its geodesics must be parallel generalized helices with vertical axis.   Theorem 1 of Ref. \cite{nistor2017constantangleS2xR} characterizes parallel generalized helices in $\mathbb{S}^2\times\mathbb{R}$ with axis $\partial_t$; they are parametrized as 
    \[
     \gamma(s) = (\sin\theta\int^s\cos\alpha(\zeta)\rmd\zeta, \sin\theta\int^s\frac{\sin\alpha(\zeta)}{\cos\varphi(\zeta)}\rmd\zeta,s\cos\theta+t_0),
    \]
    where $\theta$ and $t_0$ are constant, $\alpha$ is a smooth function, and $\varphi$ is the first coordinate of $\gamma$. (Nistor adopts geographic coordinates $(\varphi,\psi)$ for $\mathbb{S}^2$.) The third coordinate of $\gamma$ is linear. Then, the principal normal $N$ of $\gamma$ is tangent to $\mathbb{S}^2$; therefore, $\gamma$ is a geodesic of the cylinder.
\end{example}

\section{Surfaces of constant parallel angle}
\label{sect::SurfContAngle}

Let $\Sigma^2$ be a regular surface of a smooth Riemannian manifold $M^3$. Let us denote by ${\nabla}$ the Levi-Civita connection of $M^3$ and by $\snabla$ the induced connection of $\Sigma^2$. If we denote by $h$ the second fundamental form of $\Sigma^2$, then
\begin{equation}
    X,Y\in\mathfrak{X}(\Sigma^2),\,\nabla_XY = \snabla_XY +h(X,Y). 
\end{equation}
The shape operator of $\Sigma^2$ is defined as $A(X)=-\nabla_X\nu$, where $X$ is tangent, and $\nu$ is the unit normal to $\Sigma^2$. The curvature operator $R$ of $M^3$ is denoted by
\begin{equation}
    R(X,Y)Z = \nabla_Y\nabla_XZ - \nabla_X\nabla_YZ + \nabla_{[X,Y]}Z.
\end{equation}

\begin{definition}
 A surface $\Sigma^2\subset M^3$ with unit normal $\nu$ is a \emph{surface of constant parallel angle} if there exists a unit vector field $V$ parallel transported along \emph{any} curve in $\Sigma^2$ such that $V$ and $\nu$ make a constant angle. We shall refer to $V$ as the \emph{axis} of $\Sigma^2$.
\end{definition}

If  $V$ is the axis of a surface of constant parallel angle $\theta$, then we may decompose it as
\begin{equation}\label{eq::DecompositionOfVtheConstantDirection}
    V = T + \cos\theta\, \nu,\quad T = \sin\theta \,\mathbf{e}_1, 
\end{equation}
where $\mathbf{e}_1$ is a unit tangent vector field and $\theta$ is constant. Let $\mathbf{e}_2$ be the tangent unit vector field orthogonal to $\mathbf{e}_1$ such that $\{\mathbf{e}_1,\bfe_2,\nu\}$ is a positive basis.

The following result generalizes Proposition 1 of Ref. \cite{dillen2007constantangleS2xR} and Proposition 2.1 of Ref. \cite{dillen2009constantangleH2xR} by Dillen \emph{et al.}.

\begin{proposition}\label{prop::PropertiesOfTangentProjOfParallelDirection}
Let $X$ be a tangent vector field to $\Sigma^2$ and $V$ a parallel direction along $\Sigma^2$ that makes an angle $\theta$ with the unit normal $\nu$, where $\theta$ is not necessarily constant. Then, decomposing $V$ as in Eq. \eqref{eq::DecompositionOfVtheConstantDirection}, we obtain
\begin{equation}
    \snabla_XT = \cos\theta\,AX\quad\mbox{and}\quad X(\cos\theta) = -\langle AT,X\rangle.
\end{equation}
Moreover, if $\theta$ is constant, then $T$ is a principal direction of $\Sigma^2$ with vanishing principal curvature.
\end{proposition}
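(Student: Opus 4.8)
The plan is to differentiate the defining parallelism condition and split the result into its tangential and normal components relative to $\Sigma^2$. Since $V$ is parallel transported along every curve in $\Sigma^2$, we have $\nabla_X V = 0$ for every tangent field $X$. Writing $V = T + \cos\theta\,\nu$ as in Eq. \eqref{eq::DecompositionOfVtheConstantDirection} and applying the connection gives
\[
0 = \nabla_X V = \nabla_X T + X(\cos\theta)\,\nu + \cos\theta\,\nabla_X\nu.
\]
The whole proof amounts to rewriting the three terms on the right using the structure equations of the surface and then reading off the tangential and normal parts.

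Next I would decompose the first term by the Gauss formula $\nabla_X T = \snabla_X T + h(X,T)$, and for a hypersurface the normal-valued second fundamental form satisfies $h(X,T) = \langle AX, T\rangle\,\nu$; this identity follows by differentiating $\langle T, \nu\rangle = 0$ and using the Weingarten relation $A(X) = -\nabla_X\nu$. The last term is then simply $\cos\theta\,\nabla_X\nu = -\cos\theta\,AX$. Substituting these gives
\[
0 = \snabla_X T - \cos\theta\,AX + \big(\langle AX, T\rangle + X(\cos\theta)\big)\,\nu.
\]

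I would then separate the tangential and normal parts. The tangential part yields $\snabla_X T = \cos\theta\,AX$, the first claimed identity. The normal part yields $X(\cos\theta) = -\langle AX, T\rangle$, and invoking the self-adjointness of the shape operator, $\langle AX, T\rangle = \langle AT, X\rangle$, gives the second identity $X(\cos\theta) = -\langle AT, X\rangle$. Finally, for constant $\theta$ we have $X(\cos\theta) = 0$ for every tangent $X$, so the second identity forces $\langle AT, X\rangle = 0$ for all $X$, whence $AT = 0$. Since $T = \sin\theta\,\bfe_1$ with $\sin\theta \neq 0$ (otherwise $V$ would be purely normal), we conclude $A\bfe_1 = 0$, so $T$ (equivalently $\bfe_1$) is a principal direction with vanishing principal curvature.

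I do not expect a serious obstacle: the argument is a direct computation once the parallelism $\nabla_X V = 0$ is expanded. The only points requiring care are the sign-and-normal bookkeeping in the Gauss and Weingarten formulas, in particular the identification $h(X,T) = \langle AX, T\rangle\,\nu$ specific to hypersurfaces, and the symmetry of $A$ used to pass from $\langle AX, T\rangle$ to $\langle AT, X\rangle$; both are standard but must be applied with the sign conventions fixed earlier in this section.
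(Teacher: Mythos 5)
Your proposal is correct and follows essentially the same route as the paper: expand $\nabla_X V=0$ via the Gauss and Weingarten formulas, read off the tangential and normal components, and use the self-adjointness of $A$ (equivalently $\langle h(X,T),\nu\rangle=\langle AT,X\rangle$) to obtain the second identity. The sign bookkeeping is consistent with the paper's convention $A(X)=-\nabla_X\nu$, and the concluding step $AT=0$ for constant $\theta$ is exactly the paper's.
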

\begin{proof}
Taking the derivative of $V$ in the direction of $X$ and decomposing it in its tangent and normal parts, we have
\begin{eqnarray}
 \nabla_XV & = & \nabla_XT+\nabla_X(\cos\theta\,\nu) = \snabla_XT+h(X,T)+X(\cos\theta)\,\nu+\cos\theta\,\nabla_X\nu \nonumber\\
            & = & [\snabla_XT-\cos\theta\,AX]+[h(X,T)+X(\cos\theta)\nu].
\end{eqnarray}
By hypothesis, $\nabla_XV=0$ and, therefore, the tangent and normal parts of $\nabla_XV$ obtained above give $\snabla_XT=\cos\theta AX$ and $X(\cos\theta)\nu=-h(X,T)$, respectively. From the properties of the second fundamental form, it follows that
\begin{equation}
 X(\cos\theta) = \langle X(\cos\theta)\nu,\nu\rangle = -\langle h(X,T),\nu\rangle = -\langle AT,X\rangle.  
\end{equation}
Finally, if $\theta$ is constant, then $\langle AT,X\rangle=0$ for all tangent $X$, which implies $AT=0$.
\end{proof}

\begin{remark}
    The projection of the axis of a surface of constant angle is a principal direction of the surface. Thus, they have the \emph{principal direction property.} (See the discussion in the final paragraph of the Concluding remarks section \ref{sect::Conclusion}.)
\end{remark}

From Proposition \ref{prop::PropertiesOfTangentProjOfParallelDirection}, it follows that the shape operator of a surface of constant parallel angle can be written in the basis $\{\bfe_1,\bfe_2\}$ as
\begin{equation}\label{eq::ShapeOpOfConstParalAngleSurf}
    A = \left(
        \begin{array}{lr}
            0 & \,\,0  \\
            0 & \,\,\lambda \\
        \end{array}
        \right),
\end{equation}
where the function $\lambda$ is the principal curvature associated with $\bfe_2$. Thus, the second fundamental form of $\Sigma^2$ is written as
\begin{equation}\label{eq::2ndFFConstAngleSurf}
    h(\bfe_1,\bfe_1)=0,\quad h(\bfe_2,\bfe_1)=h(\bfe_1,\bfe_2)=0,\quad h(\bfe_2,\bfe_2)=\lambda\,\nu.
\end{equation}

The expressions for the shape operator and second fundamental form prove that surfaces of constant parallel angle must be extrinsically flat. Indeed, from Eq. \eqref{eq::2ndFFConstAngleSurf}, it follows that $h_{1i}=0$ and, therefore, $K_{ext}=0$:

\begin{theorem}\label{thr::ConstParAngleSurfAreExtFlat}
Let $\Sigma^2\subset M^3$ be a surface of constant parallel angle. Then, $\Sigma^2$ is extrinsically flat.
\end{theorem}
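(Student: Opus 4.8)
The plan is to read off the conclusion directly from the structural results already assembled just before the statement, so the proof is essentially a one-line consequence of Proposition~\ref{prop::PropertiesOfTangentProjOfParallelDirection}. First I would recall that for a surface of constant parallel angle the angle $\theta$ between the axis $V$ and the normal $\nu$ is \emph{constant}. By the final assertion of Proposition~\ref{prop::PropertiesOfTangentProjOfParallelDirection}, this constancy forces $AT=0$, i.e.\ the tangential projection $T=\sin\theta\,\bfe_1$ is a principal direction with vanishing principal curvature. (The degenerate case $\sin\theta=0$, where $V$ is normal to $\Sigma^2$, would have to be handled separately, but then $\theta$ constant and $V$ parallel would make $\nu$ itself parallel, which already forces $K_{ext}=0$; alternatively one restricts to $\theta\neq 0$ so that $\bfe_1$ is well defined.)

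Next I would invoke the explicit form of the shape operator and second fundamental form that the excerpt derives from the proposition, namely Eq.~\eqref{eq::ShapeOpOfConstParalAngleSurf} and Eq.~\eqref{eq::2ndFFConstAngleSurf}: in the orthonormal basis $\{\bfe_1,\bfe_2\}$ adapted to the decomposition \eqref{eq::DecompositionOfVtheConstantDirection}, the shape operator is diagonal with eigenvalues $0$ and $\lambda$. The extrinsic curvature is, by definition, the determinant of the shape operator (equivalently the product of the principal curvatures), so
\begin{equation*}
 K_{ext} = \det A = 0\cdot\lambda = 0.
\end{equation*}
Equivalently, reading off the second fundamental form, $h_{11}=h_{12}=h_{21}=0$, whence $h_{11}h_{22}-h_{12}^2=0$, which is exactly the statement that $\Sigma^2$ is extrinsically flat.

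There is essentially no obstacle here: the content of the theorem has already been established in the two displayed equations preceding it, and the theorem merely names the geometric consequence. The only point requiring a moment's care is the conceptual step, not the computation — one must be sure that \emph{constancy} of $\theta$ is what produces the vanishing eigenvalue, since Proposition~\ref{prop::PropertiesOfTangentProjOfParallelDirection} is stated for possibly non-constant $\theta$ and only the last sentence uses constancy to conclude $AT=0$. Hence the write-up reduces to citing Proposition~\ref{prop::PropertiesOfTangentProjOfParallelDirection} together with the resulting shape-operator expression~\eqref{eq::ShapeOpOfConstParalAngleSurf} and concluding $K_{ext}=\det A=0$.
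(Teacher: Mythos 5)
Your proposal is correct and follows essentially the same route as the paper: the authors also derive the theorem directly from Proposition~\ref{prop::PropertiesOfTangentProjOfParallelDirection} (constancy of $\theta$ gives $AT=0$), read off the shape operator~\eqref{eq::ShapeOpOfConstParalAngleSurf} and second fundamental form~\eqref{eq::2ndFFConstAngleSurf} in the adapted basis $\{\bfe_1,\bfe_2\}$, and conclude $K_{ext}=0$ from $h_{1i}=0$. Your extra remark about the degenerate case $\sin\theta=0$ is a small but reasonable addition that the paper leaves implicit.
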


Since surfaces of constant parallel angle are extrinsically flat, it is natural to ask whether they are also ruled, as would be the case in spaces of constant curvature \cite{daSilvaSPJMS2022}. We will prove that this is indeed the case. But, first, we need to understand better the intrinsic geometry of surfaces of constant angle.

\begin{proposition}\label{prop::LeviCivitaConnectionOfConstParalAngleSurf}
Let $\Sigma^2$ be a surface making a constant angle $\theta$ with a parallel direction $V$. Then, the Levi-Civita connection $\snabla$ of $\Sigma^2$ is given by
\begin{equation}
    \snabla_{\bfe_1}\bfe_1 = 0,   \quad
        \snabla_{\bfe_1}\bfe_2 = 0, \quad \snabla_{\bfe_2}\bfe_1 = \lambda\cot\theta\,\bfe_2,  \quad \snabla_{\bfe_2}\bfe_2 = -\lambda\cot\theta\,\bfe_2. 
\end{equation}
In addition, $[\bfe_1,\bfe_2]=-\lambda\cot\theta\,\bfe_2$.
\end{proposition}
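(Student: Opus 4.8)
The plan is to compute the four covariant derivatives $\snabla_{\bfe_i}\bfe_j$ directly, using the structural information we have already extracted about surfaces of constant parallel angle. The key inputs are Proposition \ref{prop::PropertiesOfTangentProjOfParallelDirection}, which gives $\snabla_X T = \cos\theta\,AX$ with $T = \sin\theta\,\bfe_1$, together with the diagonal form of the shape operator in Eq. \eqref{eq::ShapeOpOfConstParalAngleSurf}, namely $A\bfe_1 = 0$ and $A\bfe_2 = \lambda\,\bfe_2$. First I would substitute $X = \bfe_1$ and $X = \bfe_2$ into the identity $\snabla_X T = \cos\theta\,AX$. Since $T = \sin\theta\,\bfe_1$ and $\theta$ is constant, the left side is $\sin\theta\,\snabla_X\bfe_1$, so these two substitutions immediately yield
\begin{equation*}
    \snabla_{\bfe_1}\bfe_1 = \frac{\cos\theta}{\sin\theta}A\bfe_1 = 0,\qquad
    \snabla_{\bfe_2}\bfe_1 = \frac{\cos\theta}{\sin\theta}A\bfe_2 = \lambda\cot\theta\,\bfe_2.
\end{equation*}
This disposes of the two derivatives of $\bfe_1$ with essentially no work.

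Next I would recover the derivatives of $\bfe_2$ from orthonormality. Since $\{\bfe_1,\bfe_2\}$ is an orthonormal frame, each $\snabla_{\bfe_i}\bfe_j$ is determined by the single connection coefficient $\langle \snabla_{\bfe_i}\bfe_1,\bfe_2\rangle = -\langle \snabla_{\bfe_i}\bfe_2,\bfe_1\rangle$, while the diagonal terms $\langle\snabla_{\bfe_i}\bfe_k,\bfe_k\rangle$ vanish because $\langle\bfe_k,\bfe_k\rangle=1$ is constant. Differentiating $\langle\bfe_1,\bfe_2\rangle = 0$ gives $\langle\snabla_{\bfe_i}\bfe_1,\bfe_2\rangle = -\langle\bfe_1,\snabla_{\bfe_i}\bfe_2\rangle$. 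From the results above, $\langle\snabla_{\bfe_1}\bfe_1,\bfe_2\rangle = 0$ forces $\snabla_{\bfe_1}\bfe_2 = 0$, and $\langle\snabla_{\bfe_2}\bfe_1,\bfe_2\rangle = \lambda\cot\theta$ forces $\snabla_{\bfe_2}\bfe_2 = -\lambda\cot\theta\,\bfe_1$. Finally, the bracket follows from the torsion-free property of the Levi-Civita connection: $[\bfe_1,\bfe_2] = \snabla_{\bfe_1}\bfe_2 - \snabla_{\bfe_2}\bfe_1 = 0 - \lambda\cot\theta\,\bfe_2 = -\lambda\cot\theta\,\bfe_2$.

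I do not expect a genuine obstacle here, since every step reduces to applying an already-proven identity or an orthonormal-frame bookkeeping rule. The only point requiring a little care is consistency: the stated value $\snabla_{\bfe_2}\bfe_2 = -\lambda\cot\theta\,\bfe_2$ in the proposition is written with $\bfe_2$, whereas the orthonormality argument produces a multiple of $\bfe_1$. I would therefore double-check the indexing convention and the sign of $\lambda$ against Eq. \eqref{eq::ShapeOpOfConstParalAngleSurf} and Proposition \ref{prop::PropertiesOfTangentProjOfParallelDirection}, treating the resolution of that apparent discrepancy (likely a typographical matter of which frame vector appears) as the one place where I would slow down rather than trust the formula as printed. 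Modulo that cross-check, the proof is a short and direct computation.
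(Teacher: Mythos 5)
Your proof is correct and follows essentially the same route as the paper's: derive $\snabla_{\bfe_i}\bfe_1$ from Proposition \ref{prop::PropertiesOfTangentProjOfParallelDirection} combined with the diagonal shape operator of Eq. \eqref{eq::ShapeOpOfConstParalAngleSurf}, then recover $\snabla_{\bfe_i}\bfe_2$ from orthonormality and the bracket from torsion-freeness. Your suspicion about $\snabla_{\bfe_2}\bfe_2$ is well founded: the paper's own proof shows $\snabla_{\bfe_2}\bfe_2$ is parallel to $\bfe_1$ with coefficient $-\lambda\cot\theta$, so the $\bfe_2$ appearing there in the displayed statement is a typographical slip and your $-\lambda\cot\theta\,\bfe_1$ is the correct value.
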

\begin{proof}
From Prop. \ref{prop::PropertiesOfTangentProjOfParallelDirection}, we can write
\begin{equation}
    A\bfe_i = \tan\theta\,\snabla_{\bfe_i}\bfe_1.
\end{equation}
From Eq. \eqref{eq::ShapeOpOfConstParalAngleSurf}, we obtain $\snabla_{\bfe_1}\bfe_1=0$ and $\snabla_{\bfe_2}\bfe_1=\lambda\cot\theta\,\bfe_2$.

Since $\bfe_2$ is a unit vector, we conclude that $\snabla_{\bfe_i}\bfe_2$ is parallel to $\bfe_1$. From the orthogonality of $\bfe_1$ and $\bfe_2$, it follows that
\begin{equation}
    \langle\bfe_1,\snabla_{\bfe_i}\bfe_2\rangle = - \langle\bfe_2,\snabla_{\bfe_i}\bfe_1\rangle = \left\{
    \begin{array}{cr}
        0 & \mbox{ if }i=1  \\
        -\lambda\cot\theta & \mbox{ if }i=2  \\ 
    \end{array}
    \right..
\end{equation}
Finally, the Lie bracket $[\bfe_1,\bfe_2]$ expression follows the torsion-free property of the Levi-Civita connection.
\end{proof}

It is known that surfaces in $\mathbb{S}^2\times\mathbb{R}$ and $\mathbb{H}^2\times\mathbb{R}$ making a constant angle with the vertical direction $\partial_t$ are extrinsically flat and ruled. Such characterizations appear as Theorem 2 of Ref. \cite{dillen2007constantangleS2xR} (in $\mathbb{S}^2\times\mathbb{R}$) and as Theorem 3.2 of Ref. \cite{dillen2009constantangleH2xR} (in $\mathbb{H}^2\times\mathbb{R}$). The proofs of Dillen \emph{et al.} are coordinate-dependent and employ the constant angle condition to integrate the coordinate functions of the immersion. At the end of the integration, one can finally conclude that the surfaces are also ruled. (The property of being ruled is not explicitly emphasized in Refs.  \cite{dillen2007constantangleS2xR,dillen2009constantangleH2xR}.) 

Now, we are ready to establish the main theorem of this section, which shows that being extrinsically flat and ruled is a generic property of surfaces of constant parallel angle. Our approach will exploit the theory of ruled surfaces developed in Ref. \cite{daSilvaSPJMS2022}. For an extrinsically flat surface to be ruled, its asymptotic curves must be geodesics of the ambient space, and this condition can be translated in terms of the vanishing of a certain component of the curvature tensor of the ambient space. Namely, Proposition 5 of Ref. \cite{daSilvaSPJMS2022} establishes that an extrinsically flat surface $\Sigma^2\subset M^3$ is ruled if, and only if, $R_{trrn}=0$, where $n$ refers 
to the direction normal to $\Sigma^2$, $r$ refers to the direction of the rulings, and $t$ refers to the remaining tangent direction.

\begin{theorem}\label{thr::ConstParAngleSurfAreRuled}
Let $\Sigma^2\subset M^3$ be a surface of constant parallel angle $\theta$ and axis $V$. Then, $\Sigma^2$ is an extrinsically flat and ruled surface. In addition, the rulings are tangent to the axis' projection over $\Sigma^2$.
\end{theorem}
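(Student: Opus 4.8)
The plan is to build on the structural results already established, so that the whole task reduces to exhibiting the ruling. Extrinsic flatness is not in question: it is exactly the content of Theorem \ref{thr::ConstParAngleSurfAreExtFlat}, which follows from $h_{1i}=0$ in Eq. \eqref{eq::2ndFFConstAngleSurf}. The candidate ruling direction is then forced on us by the shape operator \eqref{eq::ShapeOpOfConstParalAngleSurf}: since $A\bfe_1=0$, the field $\bfe_1$ is the asymptotic (flat) direction of $\Sigma^2$, and it is parallel to the tangential projection $T=\sin\theta\,\bfe_1$ of the axis. Thus the precise assertion to prove is that the integral curves of $\bfe_1$ are the rulings.

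I would first try the most direct route. Combining $\snabla_{\bfe_1}\bfe_1=0$ from Proposition \ref{prop::LeviCivitaConnectionOfConstParalAngleSurf} with $h(\bfe_1,\bfe_1)=0$ from Eq. \eqref{eq::2ndFFConstAngleSurf} gives
\begin{equation*}
    \nabla_{\bfe_1}\bfe_1 = \snabla_{\bfe_1}\bfe_1 + h(\bfe_1,\bfe_1) = 0,
\end{equation*}
so the $\bfe_1$-curves are geodesics of the ambient $M^3$. An extrinsically flat surface foliated along its flat direction by ambient geodesics is ruled, and its rulings are these geodesics, which are tangent to $T$; this settles both claims.

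Alternatively, and to align with the criterion quoted from Ref. \cite{daSilvaSPJMS2022}, I would argue through the curvature tensor. Because $V$ is parallel transported along \emph{every} curve of $\Sigma^2$, we have $\nabla_XV=0$ for all tangent $X$, whence $R(X,Y)V=0$ for all tangent $X,Y$. Substituting $V=\sin\theta\,\bfe_1+\cos\theta\,\nu$ (with $\theta$ constant) into $R(\bfe_1,\bfe_2)V=0$ and extracting the $\nu$-component, the term $\langle R(\bfe_1,\bfe_2)\nu,\nu\rangle$ vanishes by the antisymmetry of $R$ in its last two slots, leaving
\begin{equation*}
    \sin\theta\,\langle R(\bfe_1,\bfe_2)\bfe_1,\nu\rangle = 0.
\end{equation*}
With $r=\bfe_1$, $t=\bfe_2$, $n=\nu$ this reads $\sin\theta\,R_{trrn}=0$, hence $R_{trrn}=0$, and Proposition 5 of Ref. \cite{daSilvaSPJMS2022} immediately yields that $\Sigma^2$ is ruled with rulings along $\bfe_1\parallel T$.

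The step I expect to be the genuine subtlety is the degenerate case $\theta=0$, where $V=\nu$ is itself parallel, $A=0$, and $T=0$: here $\Sigma^2$ is totally geodesic, the flat direction $\bfe_1$ is not singled out, and the last assertion about the axis' projection is vacuous. The second route divides by $\sin\theta$ and so says nothing in this case, whereas the direct route still produces an (arbitrary) geodesic ruling; I would therefore either assume $\theta\neq0$ or dispatch $\theta=0$ separately. The remaining care is in passing from ``the $\bfe_1$-curves are ambient geodesics'' to the global statement that $\Sigma^2$ is a ruled surface, which is precisely the packaging provided by the extrinsic-flatness machinery of Ref. \cite{daSilvaSPJMS2022}.
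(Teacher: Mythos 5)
Your proposal is correct. Your second route is essentially the paper's proof: the paper likewise reduces the ruledness claim to the vanishing of $R_{2113}=\langle R(\bfe_2,\bfe_1)\bfe_1,\nu\rangle$ and then invokes Proposition 5 of Ref.~\cite{daSilvaSPJMS2022}; the only difference is how that vanishing is obtained. The paper computes $R(\bfe_2,\bfe_1)\bfe_1$ explicitly from the connection formulas of Proposition~\ref{prop::LeviCivitaConnectionOfConstParalAngleSurf} together with $h(\bfe_1,\cdot)=0$, finding the result proportional to $\bfe_2$ and hence orthogonal to $\nu$, whereas you extract the same conclusion more cheaply from $R(X,Y)V=0$ (valid because $V$ is parallel along every tangent direction) and the antisymmetry of the curvature tensor in its last two slots --- the very device the paper deploys later, in Section~\ref{sect::SlantHelicesAndSurfConstAngle}, for $R(\partial_u,\partial_v)V$. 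Your first route is genuinely different and more elementary: from $\nabla_{\bfe_1}\bfe_1=\snabla_{\bfe_1}\bfe_1+h(\bfe_1,\bfe_1)=0$ the integral curves of $\bfe_1$ are ambient geodesics, so $\Sigma^2$ is foliated by geodesics of $M^3$ and is ruled by definition, with the rulings tangent to $T$; this bypasses the external criterion $R_{trrn}=0$ entirely, since what that criterion buys for a general extrinsically flat surface is precisely the geodesy of the asymptotic curves, which here is already supplied by Proposition~\ref{prop::LeviCivitaConnectionOfConstParalAngleSurf}. Your caveat about $\theta=0$ is reasonable but moot in context: the definition of $\bfe_1$ via $T=\sin\theta\,\bfe_1$ and the $\cot\theta$ in Proposition~\ref{prop::LeviCivitaConnectionOfConstParalAngleSurf} already presuppose $\theta\neq0$, so the paper implicitly excludes that totally geodesic degenerate case as well.
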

\begin{proof}
From Theorem \ref{thr::ConstParAngleSurfAreExtFlat}, we know that $\Sigma^2$ is extrinsically flat. The natural candidates to be the rulings are the integral lines of the projection of $V$ over the tangent planes of $\Sigma^2$. From Prop. 5 of Ref. \cite{daSilvaSPJMS2022}, $\Sigma^2$ is ruled if, and only if, the component $R_{2113}=0$, where the indexes $i=1,2,$ and $3$ refer to the directions of $\mathbf{e}_1$ (asymptotic direction of $\Sigma^2$), $\mathbf{e}_2$, and $\nu$. 

Using Prop. \ref{prop::LeviCivitaConnectionOfConstParalAngleSurf} in this section and that $h(\mathbf{e}_1,\mathbf{e}_j)=0$ in \eqref{eq::2ndFFConstAngleSurf}, we can compute
\begin{eqnarray}
  R(\bfe_2,\bfe_1)\bfe_1 & = & \nabla_{\bfe_1}\nabla_{\bfe_2}\bfe_1-\nabla_{\bfe_2}\nabla_{\bfe_1}\bfe_1+\nabla_{[\bfe_2,\bfe_1]}\bfe_1 \nonumber \\
  & = & \nabla_{\bfe_1}\snabla_{\bfe_2}\bfe_1-\nabla_{\bfe_2}\snabla_{\bfe_1}\bfe_1+\lambda\cot\theta\snabla_{\bfe_2}\bfe_1 \nonumber \\
  & = & [(\bfe_1\cdot\snabla\lambda)\cot\theta + (\lambda\cot\theta)^2]\,\bfe_2.
\end{eqnarray}
Thus, we finally obtain that
\begin{equation}
    R_{2113} = \langle R(\bfe_2,\bfe_1)\bfe_1,\nu\rangle = 0.
\end{equation}
\end{proof}

\begin{corollary}\label{cor::KintSurfacesCTEangle}
    Let $\Sigma^2\subset M^3$ be a surface of constant parallel angle $\theta$. Then, the intrinsic curvature of $\Sigma^2$ is given by
    \[
      K_{int} = -(\lambda_{,1}+\lambda^2\cot\theta)\cot\theta,
    \]
    where $\lambda_{,1}=\bfe_1\cdot\nabla\lambda$ and $\lambda$ is defined in Eq. \eqref{eq::ShapeOpOfConstParalAngleSurf}.
\end{corollary}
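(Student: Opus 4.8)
The plan is to compute $K_{int}$ as the Gaussian curvature of $\Sigma^2$ equipped with its induced metric, working directly from the intrinsic Levi-Civita connection $\snabla$ recorded in Proposition \ref{prop::LeviCivitaConnectionOfConstParalAngleSurf}. In the orthonormal frame $\{\bfe_1,\bfe_2\}$ the Gaussian curvature is read off the single independent component of the intrinsic curvature operator, namely $R^{\Sigma}(\bfe_2,\bfe_1)\bfe_1$. Because the second fundamental form satisfies $h(\bfe_1,\cdot)=0$ by Eq. \eqref{eq::2ndFFConstAngleSurf} and $\Sigma^2$ is extrinsically flat by Theorem \ref{thr::ConstParAngleSurfAreExtFlat}, Gauss' equation $K_{int}=K_{sec}+K_{ext}=K_{sec}$ says that this intrinsic component coincides with the tangential part of the \emph{ambient} curvature already evaluated in the proof of Theorem \ref{thr::ConstParAngleSurfAreRuled}; this supplies a useful cross-check and an immediate shortcut.

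Concretely, I would form
\[
R^{\Sigma}(\bfe_2,\bfe_1)\bfe_1 = \snabla_{\bfe_1}\snabla_{\bfe_2}\bfe_1 - \snabla_{\bfe_2}\snabla_{\bfe_1}\bfe_1 + \snabla_{[\bfe_2,\bfe_1]}\bfe_1,
\]
and substitute the connection data of Proposition \ref{prop::LeviCivitaConnectionOfConstParalAngleSurf}: $\snabla_{\bfe_2}\bfe_1=\lambda\cot\theta\,\bfe_2$, $\snabla_{\bfe_1}\bfe_1=0$, $\snabla_{\bfe_1}\bfe_2=0$, together with $[\bfe_2,\bfe_1]=\lambda\cot\theta\,\bfe_2$. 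Since $\theta$ is constant, $\snabla_{\bfe_1}(\lambda\cot\theta\,\bfe_2)=\cot\theta\,\lambda_{,1}\,\bfe_2$, the middle term vanishes, and the bracket term contributes $\lambda^2\cot^2\theta\,\bfe_2$. Hence $R^{\Sigma}(\bfe_2,\bfe_1)\bfe_1=(\lambda_{,1}\cot\theta+\lambda^2\cot^2\theta)\,\bfe_2$, in agreement with the coefficient $[(\bfe_1\cdot\snabla\lambda)\cot\theta+(\lambda\cot\theta)^2]$ found in the proof of Theorem \ref{thr::ConstParAngleSurfAreRuled}.

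It then remains to extract $K_{int}$ with the correct sign. With the curvature convention adopted in this paper, $R(X,Y)=R^{std}(Y,X)$, so that the Gaussian (equivalently, sectional) curvature in the orthonormal frame is $K_{int}=\langle R(\bfe_2,\bfe_1)\bfe_2,\bfe_1\rangle=-\langle R(\bfe_2,\bfe_1)\bfe_1,\bfe_2\rangle$, the last equality using the antisymmetry of the curvature tensor in its final two slots. Substituting the computed component yields
\[
K_{int} = -(\lambda_{,1}\cot\theta+\lambda^2\cot^2\theta) = -(\lambda_{,1}+\lambda^2\cot\theta)\cot\theta,
\]
as claimed. The one genuinely delicate point is exactly this sign bookkeeping: the operator convention $R(X,Y)Z=\nabla_Y\nabla_XZ-\nabla_X\nabla_YZ+\nabla_{[X,Y]}Z$ swaps the usual roles of $X$ and $Y$, so one must pin down the placement of the minus sign (for instance by demanding that the round sphere carry positive curvature) before reading off $K_{int}$ from $R^{\Sigma}(\bfe_2,\bfe_1)\bfe_1$. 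Everything else is a short, routine substitution.
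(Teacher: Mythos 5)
Your proof is correct and follows exactly the route the paper intends: the corollary is stated without proof precisely because the coefficient $(\bfe_1\cdot\snabla\lambda)\cot\theta+(\lambda\cot\theta)^2$ was already obtained in the proof of Theorem \ref{thr::ConstParAngleSurfAreRuled}, and since $h(\bfe_1,\cdot)=0$ the intrinsic and ambient computations coincide term by term, so your direct evaluation of $R^{\Sigma}(\bfe_2,\bfe_1)\bfe_1$ from Proposition \ref{prop::LeviCivitaConnectionOfConstParalAngleSurf} is the same calculation. Your handling of the sign under the convention $R(X,Y)Z=\nabla_Y\nabla_XZ-\nabla_X\nabla_YZ+\nabla_{[X,Y]}Z$ is also right, and it is indeed the only delicate point.
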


\section{Slant helices and geodesics of surfaces of constant parallel angle}
\label{sect::SlantHelicesAndSurfConstAngle}

To our knowledge, the concept of slant helices was first introduced by Izumiya and Takeuchi in Euclidean space in the 2000s \cite{IzumiyaTJM2004}. A decade later, Lucas and Ortega-Yag\"ues characterized Euclidean slant helices as geodesics of surfaces of constant angle \cite{LucasBBMS2016}.

In this section, we show that the surface of constant angle containing a Euclidean slant helix $\gamma$ is the rectifying developable surface $\mathcal{R}_{\gamma}$ of $\gamma$. The rectifying developable surface is the envelope of the rectifying planes, and it is parametrized as the ruled surface $(u,v)\mapsto\gamma(u)+v(\tau T+\kappa B)(u)$ \cite{struik}. We define rectifying surfaces for curves on a generic manifold and show that if a surface of constant angle exists, it must be a rectifying surface of a slant helix. We show that space forms of nonzero curvature do not admit surfaces of constant angle. In addition, we prove that flat manifolds are characterized by the property that the rectifying surface of any slant helix is a surface of constant parallel angle. Finally, we characterize surfaces of constant angle on Riemannian products with the real line.

\subsection{Rectifying surfaces}

The rectifying developable surface $\mathcal{R}$ of an Euclidean curve $\gamma$ is shown to be a flat ruled surface. These properties justify the terminology for $\mathcal{R}$. Indeed, since it has vanishing Gaussian curvature, $\mathcal{R}$ can be developed into a plane. Moreover, since $\gamma$ is a geodesic of its rectifying surface, this process then locally maps $\gamma$ to a straight line, i.e., $\gamma$ was rectified.

Let $\gamma:I\to\mathbb{E}^3$ be a regular smooth curve with Frenet frame $\{T,N,B\}$. The plane spanned by $T$ and $B$ is known as the \emph{rectifying plane}. We can define a surface $\mathcal{R}$, the \emph{rectifying developable}, as the envelope of the rectifying planes. Thus, $\mathcal{R}$ is implicitly given by
\begin{equation}\label{eq::DefRectifyingDevInE3}
    X\in\mathcal{R}:[X-\gamma(u)]\cdot N(u)=0, \quad u\in I.
\end{equation}
Taking the derivative with respect to $u$,
\begin{equation}
   0 = -T\cdot N+(X-\gamma)\cdot(-\kappa T+\tau B)=(X-\gamma)\cdot(-\kappa T+\tau B).
\end{equation}
Therefore, as in the intersection of two families of planes, $\mathcal{R}$ must be a ruled surface whose rulings are parallel to 
\begin{equation}
    Z = N\times(-\kappa T+\tau B)=\tau T+\kappa B,
\end{equation}
and we can parameterize $\mathcal{R}$ as
\begin{equation}
    X(u,v)=\gamma(u)+vZ(u).
\end{equation}
The vector field $\tau T+\kappa B$ is often called the \emph{Darboux vector field} of $\gamma$.

\begin{remark}
   A similar construction of rectifying developable surfaces as envelopes of a 1-parameter family of planes can be done in a space form. Indeed, seeing the 3-sphere $\mathbb{S}^3(r)$ and hyperbolic space $\mathbb{H}^3(r)$ as submanifolds of $\mathbb{E}^4$ and $\mathbb{E}_1^4$, respectively, totally geodesic surfaces play the role of planes. In these models, totally geodesic surfaces are given by intersections with planes of $\mathbb{R}^4$ passing through the origin \cite{Spivak1979v4}.  
\end{remark}


\begin{definition}
    Let $M^3$ be a smooth Riemannian manifold. The \emph{rectifying surface} of a regular smooth curve $\gamma:I\to M^3$ is defined as the ruled surface
    \begin{equation}
      \mathcal{R}_{\gamma}:(u,v){\mapsto} X(u,v)=\exp_{\gamma(u)}(vD(u)), \quad D = \frac{\tau T+\kappa B}{\sqrt{\kappa^2+\tau^2}}.
    \end{equation}
\end{definition} 

We will show that rectifying surfaces in space forms are extrinsically flat; therefore, we shall also refer to them as rectifying developable surfaces. However, we can not guarantee that a rectifying surface is extrinsically flat in a generic manifold. Thus, we omit the word ``developable" from the definition.

In a space form, the extrinsic Gaussian curvature of a rectifying surface is proportional to $-\langle\gamma'\times D,\nabla_{\gamma'}D\rangle^2$ \cite{daSilvaSPJMS2022}. (Here, $D$ is a vector, not a derivative, as in Ref. \cite{daSilvaSPJMS2022}. In addition, we can define a vector product in $\mathbb{S}^3(r)$ and $\mathbb{H}^3(r)$ using the ternary product in $\mathbb{E}^4$ and $\mathbb{E}_1^4$ \cite{daSilvaSPJMS2022}, respectively.) A straightforward calculation shows that $\mathcal{R}_{\gamma}$ is always extrinsically flat in a space form. Moreover, along $\gamma$, the unit normal $\nu_{\mathcal{R}}$ of $\mathcal{R}$ is parallel to
\[
 T\times D = -\frac{\tau}{\sqrt{\kappa^2+\tau^2}}N,
\]
which implies $\gamma$ is a geodesic of its rectifying developable surface. (The fact that $\gamma$ is a geodesic of its rectifying surface is also valid in a generic manifold; see Theorem \ref{thr::CharacRectifyingSurf}.)


\begin{remark}
    In Refs. \cite{LucasJMAA2015,LucasMJM2016},  Lucas and Ortega-Yag\"ues show that $\gamma$ is a rectifying curve if, and only if, its rectifying developable is a cone. Thus, generalizing a result by Izumiya and Takeuchi in Euclidean space \cite{IzumiyaTJM2004}. However, the authors of Refs. \cite{LucasJMAA2015,LucasMJM2016} do not show that their rectifying developable surfaces are extrinsically flat.
\end{remark}

We shall prove that the rectifying surface is the only ruled surface deserving the adjective ``rectifying". (The theorem below generalizes Prop. 4.1 of Ref. \cite{IzumiyaTJM2004} in Euclidean space.)

\begin{theorem}\label{thr::CharacRectifyingSurf}
    Let $\gamma:I\to M^3$ be a regular smooth curve. The rectifying surface $\mathcal{R}_{\gamma}\subset M^3$ of $\gamma$ is the only regular ruled surface containing $\gamma$ and satisfying the properties
    \begin{enumerate}
        \item $\gamma$ is a geodesic of $\mathcal{R}_{\gamma}$, and
        \item $\mathcal{R}_{\gamma}$ is extrinsically flat along $\gamma$.
    \end{enumerate}
\end{theorem}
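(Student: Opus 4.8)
The plan is to establish existence and uniqueness simultaneously by showing that the two hypotheses force the ruling direction to be $\pm D$. Since the rulings of a ruled surface in the sense used here are geodesics of $M^3$, I would write a generic regular ruled surface $\Sigma$ through $\gamma$ (which, having $\kappa>0$, is transverse to the rulings and may serve as directrix) as $X(u,v)=\exp_{\gamma(u)}(vW(u))$ with $\gamma=X(\cdot,0)$ and $W$ a unit field along $\gamma$. Along $\gamma$ one has $X_u|_{v=0}=T$ and $X_v|_{v=0}=W$; moreover each ruling $v\mapsto X(u,v)$ is a geodesic, so $\nabla_{X_v}X_v=0$ and hence the coefficient $h_{22}=\langle\nabla_{X_v}X_v,\nu\rangle=0$ identically.

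First I would use Property 1. As $\gamma$ is a geodesic of $\Sigma$, the tangential part of $\nabla_{\gamma'}\gamma'=\kappa N$ vanishes; since $\kappa N\perp T$ automatically, this is equivalent to $\langle N,W\rangle=0$. Thus $W$ lies in the rectifying plane,
\[
 W=\cos\phi\,T+\sin\phi\,B,
\]
for some $\phi=\phi(u)$, with $\sin\phi\neq0$ by regularity (else $W=\pm T$ and $X_u,X_v$ are dependent along $\gamma$). Consequently $T\times W=-\sin\phi\,N$, so the unit normal satisfies $\nu=\pm N$ along $\gamma$.

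Next I would impose Property 2. Because $h_{22}=0$, one has $K_{ext}=-h_{12}^2/(EG-F^2)$, so extrinsic flatness along $\gamma$ is equivalent to $h_{12}(u,0)=0$. Using $[X_u,X_v]=0$ and the torsion-free property, $\nabla_{X_u}X_v|_{v=0}=\nabla_{\gamma'}W$, and the Frenet relations $\nabla_TT=\kappa N$, $\nabla_TB=-\tau N$ give
\[
 \nabla_{\gamma'}W=(\cos\phi)'\,T+(\sin\phi)'\,B+(\kappa\cos\phi-\tau\sin\phi)\,N.
\]
Since $\nu=\pm N$, the flatness condition reads $\kappa\cos\phi-\tau\sin\phi=0$; combined with $\cos^2\phi+\sin^2\phi=1$ this gives $(\cos\phi,\sin\phi)=\pm(\tau,\kappa)/\sqrt{\kappa^2+\tau^2}$, hence $W=\pm D$. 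The two signs trace the same ruled surface, namely $\mathcal{R}_{\gamma}$, which proves uniqueness. Substituting $W=D$ back confirms $\langle N,D\rangle=0$ and $\kappa\cos\phi-\tau\sin\phi=0$, so $\mathcal{R}_{\gamma}$ does satisfy both properties; this settles existence.

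I expect the main obstacle to be the careful transfer of the Euclidean second-fundamental-form computation to the Riemannian, geodesic-ruled setting: verifying $h_{22}\equiv0$, the identity $\nabla_{X_u}X_v|_{\gamma}=\nabla_{\gamma'}W$, and the identification $\nu=\pm N$, all evaluated along $\gamma$ alone, so that no knowledge of $\Sigma$ off $\gamma$ is required. Once these local facts are secured, the two geometric hypotheses collapse to the two scalar equations $\langle N,W\rangle=0$ and $\kappa\cos\phi=\tau\sin\phi$, which jointly determine $W$ up to sign, and the theorem follows.
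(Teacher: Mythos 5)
Your argument is correct and follows essentially the same route as the paper: Property 1 forces the ruling direction into the rectifying plane $\mathrm{span}\{T,B\}$, and Property 2 then reduces to the single scalar equation $\kappa\cos\phi-\tau\sin\phi=0$, pinning the director down to $\pm D$. The only difference is presentational: you derive the flatness criterion along $\gamma$ by hand (via $h_{22}=0$, $\nu=\pm N$, and $h_{12}=\pm\langle\nabla_T W,N\rangle$), whereas the paper invokes the equivalent formula $K_{ext}\propto -\mathrm{vol}(X_u,X_v,\nabla_{X_u}X_v)^2$ from the literature.
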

\begin{proof}
   Let $Z$ be the director field of the rulings of a ruled surface $\Sigma^2$ satisfying conditions 1 and 2. Condition 1 implies that $Z=\lambda T+\mu B$, for some smooth real functions $\lambda,\mu$. The extrinsic curvature of a ruled surface is proportional to $-vol( X_u,X_v,\nabla_{X_u}X_v)^2$, see, e.g., Eq. (5) of Ref. \cite{CastrillonLopezArXiv2023}. Thus, condition 2 is valid if, and only if, $vol( T,Z,\nabla_{T}Z)=0$. Using that
   \[
   \nabla_TZ = \lambda'T+(\lambda \kappa-\mu\tau)N+\mu'B,
   \]
   we obtain
   \[
    0 =  vol( T,Z,\nabla_{T}Z) = \mu(\lambda \kappa-\mu\tau)\,vol(T,B,N) = \mu(\mu\tau-\lambda \kappa).
   \]
   Since $\mu\not=0$ (we want regular surfaces), we deduce that $\mu\tau-\lambda \kappa = 0$ and, therefore, $Z$ is a multiple of $\tau T+\kappa B$. In short, 1 and 2 implies $\Sigma^2$ is the rectifying surface $\mathcal{R}_{\gamma}$.
\end{proof}

\subsection{Rectifying surfaces and constant parallel angle}
\label{subsect::RectSurfAndConstantAngle}

Let $\gamma$ be a geodesic of a surface $\Sigma^2$ of constant parallel angle. It follows that $\gamma$  must be a slant helix. Indeed, being a geodesic implies that the principal normal $N$ of $\gamma$ is a multiple of the surface unit normal $\nu$. Since $\nu$ makes a constant parallel angle with a fixed direction, so does $N$. Moreover, surfaces of constant parallel angle are necessarily extrinsically flat and ruled, Theorem \ref{thr::ConstParAngleSurfAreRuled}. Therefore, our Theorem \ref{thr::CharacRectifyingSurf} implies that surfaces of constant parallel angle must be rectifying surfaces of slant helices. 

Now, we provide an alternative and shorter proof of the characterization of Euclidean slant helix as geodesics of surfaces of constant angle surface \cite{LucasBBMS2016}.

\begin{theorem}\label{thr::CharactConstAnglSurfInE3}
Let $\gamma:I\to\mathbb{E}^3$ be a regular curve. Then, $\gamma$ is a slant helix if, and only if, it is the geodesic of a surface of constant angle $\Sigma^2$. In addition, $\Sigma^2$ is unique, and it is the rectifying developable surface of $\gamma$ 
\end{theorem}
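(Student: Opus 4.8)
The plan is to prove both directions by combining the characterization of slant helices from Theorem \ref{thr::CharactParallelSlaHelices} with the rigidity of the rectifying surface from Theorem \ref{thr::CharacRectifyingSurf}, specializing everything to $\mathbb{E}^3$ where the ambient curvature operator vanishes. First I would prove the ``only if'' direction. Suppose $\gamma$ is a geodesic of a surface of constant angle $\Sigma^2$. Since $\gamma$ is a geodesic, its principal normal $N$ is parallel to the surface unit normal $\nu$; because $\nu$ makes a constant parallel angle with the axis $V$, so does $N$, and hence $\gamma$ is a parallel slant helix by definition. By Theorem \ref{thr::ConstParAngleSurfAreRuled}, $\Sigma^2$ is extrinsically flat and ruled, and $\gamma$ is a geodesic of it; by the uniqueness part of Theorem \ref{thr::CharacRectifyingSurf}, $\Sigma^2$ must coincide with the rectifying surface $\mathcal{R}_{\gamma}$. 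This handles both the implication and the uniqueness claim simultaneously.

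For the ``if'' direction I would start with a slant helix $\gamma$ and exhibit its rectifying developable $\mathcal{R}_{\gamma}$ as a surface of constant angle having $\gamma$ as a geodesic. The geometric content is that the natural axis candidate is the parallel transport of the slant-helix axis $V = \cos\theta\,N + \sin\theta\,D$ along the rulings. By the second part of Theorem \ref{thrABC::RectifyingSurfAsSurfCteAngle}, $\mathcal{R}_{\gamma}$ is a surface of constant angle precisely when this parallel-transported field lies in the kernel of the curvature operator $R(\partial X/\partial u,\partial X/\partial v)$. In $\mathbb{E}^3$ the curvature operator vanishes identically, so this kernel condition is automatic, and $\mathcal{R}_{\gamma}$ is therefore a surface of constant parallel angle. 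That $\gamma$ is a geodesic of $\mathcal{R}_{\gamma}$ is already recorded in the discussion preceding Theorem \ref{thr::CharacRectifyingSurf} (and follows from $\nu_{\mathcal{R}}\parallel N$ along $\gamma$).

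I expect the main obstacle to be purely bookkeeping: verifying that the angle $\nu$ makes with the parallel-transported axis is genuinely \emph{constant} (not merely well-defined), and checking that the extended field $V$ really is parallel along \emph{every} curve of $\mathcal{R}_{\gamma}$, not just along the rulings and along $\gamma$. In $\mathbb{E}^3$ this is where flatness does the essential work, since parallel transport is path-independent, so a field parallel along the rulings and along the directrix is automatically parallel along all of $\Sigma^2$; invoking Theorem \ref{thr::WhenCylindricalGeodesicsAreHelices}-style flatness reasoning (or simply the vanishing of $R$) closes the gap. A minor subtlety is ensuring the constant angle $\theta$ of the slant helix matches the constant angle the surface makes with $V$; I would confirm this by computing $\langle \nu_{\mathcal{R}}, V\rangle$ along $\gamma$ using $\nu_{\mathcal{R}}\parallel N$ and $V=\cos\theta\,N+\sin\theta\,D$, which gives exactly $\cos\theta$ up to sign and hence the claimed constant angle.
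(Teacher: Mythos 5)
Your proof of the direction ``geodesic of a constant angle surface $\Rightarrow$ slant helix'' and of the uniqueness claim is exactly the paper's argument: $N\parallel\nu$ forces $N$ to make a constant angle with the axis, and Theorem \ref{thr::ConstParAngleSurfAreRuled} together with the rigidity statement of Theorem \ref{thr::CharacRectifyingSurf} pins $\Sigma^2$ down as $\mathcal{R}_{\gamma}$. Where you diverge is the converse. The paper argues directly in $\mathbb{E}^3$: along $\gamma$ the normal of $\mathcal{R}_{\gamma}$ is $\pm N$, which makes a constant angle with the (genuinely fixed) axis $V=\cos\theta\,N+\sin\theta\,D$ of the slant helix, and since $\mathcal{R}_{\gamma}$ is developable its normal is stationary along each ruling, so the angle is constant on all of $\mathcal{R}_{\gamma}$. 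You instead invoke the curvature-operator criterion $R(\partial_u,\partial_v)V=0$ of Theorem \ref{thrABC::RectifyingSurfAsSurfCteAngle}, which the paper only derives in the paragraphs \emph{after} Theorem \ref{thr::CharactConstAnglSurfInE3}. This is not circular --- the derivation of that criterion nowhere uses the Euclidean theorem --- but it does mean your proof forward-references machinery, so you would need to reorder the material or establish the criterion as an independent lemma first. Two points you should make explicit if you keep this route: (i) the derivation of the criterion presupposes that $\mathcal{R}_{\gamma}$ is extrinsically flat, so that $\nu$ is parallel along the rulings and the candidate axis $V=\sin\theta\,\partial_v-\cos\theta\,\nu$ is indeed the parallel transport of the slant-helix axis; in $\mathbb{E}^3$ this is the classical flatness of the rectifying developable and should be stated. (ii) Once $V$ is shown to be parallel along all of $\mathcal{R}_{\gamma}$, the constancy of the angle is automatic from the very definition of this candidate axis, $\langle\nu,V\rangle=-\cos\theta$, so the ``constant vs.\ merely well-defined'' worry you raise dissolves at that point. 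Both routes are correct; the paper's is shorter and self-contained where the theorem appears, while yours makes the role of ambient flatness more transparent and lines up with the general theory developed afterwards.
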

\begin{proof}
At the beginning of this subsection, we showed that geodesics of surfaces of constant angle must be slant helices. (Note that this implication does not depend on the ambient space.)

Conversely, let $\gamma$ be a slant helix and let $\mathcal{R}_{\gamma}$ be its rectifying developable surface. Note $\gamma$ is a geodesic in $\mathcal{R}_{\gamma}$. Since (i) the normal $\nu$ of $\mathcal{R}_{\gamma}$ along $\gamma$ is given by the principal normal of $\gamma$ and (ii) $\nu$ is stationary along the rulings, then $\mathcal{R}_{\gamma}$ makes a constant angle with a fixed direction. 

Finally, the uniqueness of $\mathcal{R}_{\gamma}$ as the only surface of constant angle containing the slant helix $\gamma$ as a geodesic follows from Theorem \ref{thr::CharacRectifyingSurf}, as discussed at the beginning of this subsection. 
\end{proof}

We already know that geodesics of surfaces of constant parallel angle are parallel slant helices. We shall now investigate the converse. Namely, is the rectifying surface $\mathcal{R}_{\gamma}\subset M^3$ of a parallel slant helix $\gamma$ always a surface of constant parallel angle? The study of generalized helices as geodesics of cylinders suggests that the answer may depend on properties of $M^3$ (see Theorem \ref{thr::WhenCylindricalGeodesicsAreHelices}).

Let $\gamma:I\to M^3$ be a parallel slant helix with Frenet frame $\{T,N,B\}$, curvature $\kappa$, and torsion $\tau$. Then, its axis is given by $V=\cos\theta N+\sin \theta D$, where $D=(\tau T+\kappa B)/\sqrt{\kappa^2+\tau^2}$ denotes the unit Darboux vector field of $\gamma$ (Theorem \ref{thr::CharactParallelSlaHelices}). Let  $\mathcal{R}_{\gamma}:X(u,v)=\exp_{\gamma(u)}(vD(u))$ be the rectifying surface of $\gamma$. Then, $\partial_v=\partial X/\partial v$ is parallel transported along the rulings. 

If $\mathcal{R}_{\gamma}$ is a surface of constant angle, then  Theorem \ref{thr::ConstParAngleSurfAreRuled} implies $\mathcal{R}_{\gamma}$ is extrinsically flat and, therefore, its unit normal $\nu$ is parallel transported along the rulings. Thus, if $\mathcal{R}_{\gamma}$ is a surface of constant angle, its axis must be given by 
\begin{equation}\label{eq::AxisOfRectifyingSurfOfSlantHelix}
    V = \sin\theta\,\partial_v - \cos\theta\, \nu.
\end{equation}
Note the minus sign ensures that along $\gamma$, the basis $\{\partial_u,\partial_v,\nu\}$ has the same orientation as the Frenet frame of $\gamma$.

The rectifying surface is a surface of constant parallel angle provided that the axis $V$ is parallel transported along any curve on the surface. By construction, $\nabla_{\partial_v}V=0$. Thus, it remains to check whether $\nabla_{\partial_u}V=0$.

If $\mathcal{R}_{\gamma}$ is extrinsically flat, it follows that $h_{12}=0$ and, therefore,
\[
 \langle \nu,\nabla_{\partial_u}V\rangle = \sin\theta\langle \nu,\nabla_{\partial_u}\partial_v\rangle - \cos\theta\langle \nu,\nabla_{\partial_u}\nu\rangle = \sin\theta\,h_{12} = 0
\]
and
\[
 \langle \partial_v,\nabla_{\partial_u}V\rangle = \sin\theta\langle \partial_v,\nabla_{\partial_u}\partial_v\rangle - \cos\theta\langle \partial_v,\nabla_{\partial_u}\nu\rangle = \cos\theta\,h_{12} = 0.
\]
These equalities hold even if $\nabla_{\partial_u}V\not=0$, as their validity relies on $K_{ext}=0$ only.

Thus, in a generic manifold $M^3$,
\[
 \nabla_{\partial_u}V = 0 \Leftrightarrow \langle \partial_u,\nabla_{\partial_u}V\rangle = 0. 
\]
We have
\begin{equation}\label{eq::NablaDelUV}
    \langle \partial_u,\nabla_{\partial_u}V\rangle = \sin\theta\langle \partial_u,\nabla_{\partial_u}\partial_v\rangle - \cos\theta\langle \partial_u,\nabla_{\partial_u}\nu\rangle = \frac{g_{11,2}}{2}\sin\theta+h_{11}\cos\theta.
\end{equation}
In short, the equation $\frac{1}{2}g_{11,2}\sin\theta+h_{11}\cos\theta=0$ represents the necessary and sufficient condition for an extrinsically flat rectifying surface of a given slant helix be a surface of constant parallel angle.

\begin{proposition}\label{prop::NoSurfConstAngleInSpaceForm}
    There exists no surface in $\mathbb{S}^3(r)$ and $\mathbb{H}^3(r)$ making a constant parallel angle with a fixed direction.
\end{proposition}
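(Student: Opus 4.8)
The plan is to argue by contradiction, reducing the existence of such a surface to the kernel condition of the second part of Theorem \ref{thrABC::RectifyingSurfAsSurfCteAngle} and then showing this condition cannot be met when the sectional curvature is nonzero. Suppose $\Sigma^2\subset M^3$ is a surface of constant parallel angle, where $M^3$ is $\mathbb{S}^3(r)$ or $\mathbb{H}^3(r)$, so that $M^3$ has constant sectional curvature $c=\varepsilon/r^2$ with $\varepsilon=\pm1$; in both cases $c\neq0$. By Theorem \ref{thr::ConstParAngleSurfAreRuled} and the discussion opening Subsection \ref{subsect::RectSurfAndConstantAngle}, $\Sigma^2$ is the rectifying surface $\mathcal{R}_{\gamma}$ of a parallel slant helix $\gamma$, and the natural candidate for its axis, obtained by parallel transport along the rulings, is given by Eq. \eqref{eq::AxisOfRectifyingSurfOfSlantHelix}, namely $V=\sin\theta\,\partial_v-\cos\theta\,\nu$, with $\sin\theta\neq0$ (the tangential projection $\sin\theta\,\partial_v$ of the axis is nonzero precisely because $\Sigma^2$ is genuinely ruled, equivalently because the slant helix has finite $\sigma=\cot\theta$).

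Next I would compute the curvature operator explicitly. On a space form of sectional curvature $c$ one has $R(\partial_u,\partial_v)W=c\big(\langle\partial_u,W\rangle\,\partial_v-\langle\partial_v,W\rangle\,\partial_u\big)$ for every $W$. Since $\mathcal{R}_{\gamma}$ is a regular surface, $\partial_u$ and $\partial_v$ are linearly independent, so for $c\neq0$ the vanishing of the right-hand side forces both scalar coefficients to vanish; hence $\ker R(\partial_u,\partial_v)=\{W:\langle\partial_u,W\rangle=\langle\partial_v,W\rangle=0\}$, which in the three-dimensional ambient space is exactly the normal line $\mathbb{R}\,\nu$.

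Finally I would evaluate the kernel condition on $V$. Because the $v$-lines of $\mathcal{R}_{\gamma}$ are unit-speed geodesics, $\partial_v$ is a unit field orthogonal to $\nu$, so $\langle\partial_v,V\rangle=\sin\theta\neq0$. Therefore $V\notin\mathbb{R}\,\nu=\ker R(\partial_u,\partial_v)$, i.e., the parallel transported axis does not lie in the kernel of $R(\partial_u,\partial_v)$. By the second part of Theorem \ref{thrABC::RectifyingSurfAsSurfCteAngle}, $\mathcal{R}_{\gamma}$ cannot be a surface of constant parallel angle, contradicting the assumption on $\Sigma^2$. Since $\varepsilon=\pm1$ covers both $\mathbb{S}^3(r)$ and $\mathbb{H}^3(r)$, no such surface exists.

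The main obstacle is not the computation but the logical setup. I must invoke that every surface of constant parallel angle is forced to be the rectifying surface of a slant helix with axis of the prescribed form, so that the single scalar $\sin\theta$ controls the entire argument, and I must exclude the degenerate possibility $\sin\theta=0$. The case $\theta=0$ corresponds to $V=\pm\nu$ with a parallel unit normal, i.e., a totally geodesic surface, which lies outside the ruled/slant-helix framework developed here (its geodesics have a constant normal indicatrix, so $\sigma$ is not finite); ensuring this degenerate configuration is not counted is what legitimises basing the contradiction on $\sin\theta\neq0$, and it is where care is needed. By contrast, the sign convention for $R$ is immaterial, since the kernel of $R(\partial_u,\partial_v)$ is unchanged under $R\mapsto-R$.
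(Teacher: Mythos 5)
Your argument is correct, and it coincides in substance with the \emph{alternative} proof the paper itself gives at the end of Subsection \ref{subsect::RectSurfAndConstantAngle}: there the authors use that any curvature component with three distinct indices vanishes in a space form, so $R_{uvun}=0$ while $R_{uvuv}\neq 0$, making the necessary condition $\cot\theta\,R_{uvun}=R_{uvuv}$ impossible. You reach the same contradiction by computing the kernel of $W\mapsto R(\partial_u,\partial_v)W$ directly (it is the normal line $\mathbb{R}\,\nu$ when $c\neq 0$) and observing that $V=\sin\theta\,\partial_v-\cos\theta\,\nu$ has nonzero tangential part; these are two phrasings of the same fact, and both rest on the reduction of the problem to rectifying surfaces of slant helices via Theorems \ref{thr::ConstParAngleSurfAreRuled} and \ref{thr::CharacRectifyingSurf} and on the criterion $R(\partial_u,\partial_v)V=0$. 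What you avoid is the paper's \emph{primary} proof, which explicitly computes $g_{11}$, $g_{11,2}$, $\sqrt{g}$ and $h_{11}$ for $\mathcal{R}_\gamma\subset\mathbb{S}^3(r)$ and shows $\tfrac{1}{2}g_{11,2}\sin\theta+h_{11}\cos\theta$ vanishes only at $v=0$; that computation buys quantitative information (e.g.\ the $r\to\infty$ limit recovering the Euclidean case) that your argument does not. One remark in your favour: your explicit exclusion of $\sin\theta=0$ is more careful than the paper, since a totally geodesic surface in $\mathbb{S}^3(r)$ or $\mathbb{H}^3(r)$ does have a parallel unit normal and would satisfy the definition with $\theta=0$; the paper tacitly assumes $\mathbf{e}_1$ is well defined (hence $\sin\theta\neq 0$) throughout Section \ref{sect::SurfContAngle}, and your proof makes that hypothesis visible.
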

\begin{proof}
    We will provide an alternative proof for this proposition at the end of this section, which will be useful in characterizing the manifolds with the property that every rectifying surface of a slant helix is a surface of constant parallel angle.
    
    Let us do the proof for $\mathbb{S}^3(r)$. The computations for $M=\mathbb{H}^3(r)$ are analogous. First, note that 
    \[
     (\frac{\kappa}{\sqrt{\kappa^2+\tau^2}})' = -\tau\sigma \quad \mbox{ and } \quad (\frac{\tau}{\sqrt{\kappa^2+\tau^2}})' = \kappa\sigma\,,
    \]
    where $\sigma$ is defined as in Eq. \eqref{eq::SigmaSlantHelices}. From Eq. (9) of Ref. \cite{daSilvaSPJMS2022},
    \begin{eqnarray}
        g_{11} & = & c_v^2 + 2 r s_vc_v \langle T,\nabla_TD\rangle + r^2s_v^2\langle \nabla_TD,\nabla_TD\rangle + s_v^2\langle T,D\rangle^2 \nonumber\\
        & = & c_v^2 + 2 r \kappa\sigma s_vc_v + r^2\sigma^2(\kappa^2+\tau^2)s_v^2 + s_v^2\frac{\tau^2}{\omega^2},
    \end{eqnarray}
    where we adopted the shorthand notation $s_v = \sin(v/r)$, $c_v = \cos(v/r)$, and $\omega = \sqrt{\kappa^2+\tau^2}$. Taking the derivative with respect to $v$ gives
    \begin{equation}
        g_{11,2} = \left(r\omega^2\sigma^2-\frac{\kappa^2}{r\omega^2}\right)s_{2v} + 2 \kappa \sigma c_{2v}. 
    \end{equation}
    In addition, using that $g_{22}=1$ and $g_{12}=\langle T,D\rangle = \tau/\omega$, we can write the determinant of the metric as
    \begin{equation}
        g = (\frac{\kappa}{\omega}c_v+r\omega\sigma s_v)^2 \Rightarrow \sqrt{g} = (\frac{\kappa}{\omega}c_v+r\omega\sigma s_v).
    \end{equation}
    Now, the coefficient $h_{11}$ can be obtained from Prop. 1 of Ref. \cite{daSilvaSPJMS2022}. (Attention to the differences in notation. Here, $D$ is the unit Darboux vector field, not a covariant derivative). After slightly long but straightforward calculations, we deduce that 
    \begin{eqnarray}
        h_{11} & = & \frac{c_v(\nabla_TT,T,D)+rs_v[(\nabla_TT,\nabla_TD,D)
     +(\nabla_T^2D,T,D)]}{\sqrt{g}} \nonumber \\ 
     & & +\frac{r^2s^2_v(\nabla_T^2D,\nabla_TD,D)}{c_v\sqrt{g}} \nonumber \\ 
     & = & -\frac{\omega}{c_v\sqrt{g}}\left(\frac{\kappa}{\omega}c_v+r\omega\sigma s_v\right)^2 = -\frac{1}{c_v}\left(\kappa c_v+r\omega^2\sigma s_v\right).
    \end{eqnarray}
    
    We finally obtain
    \begin{equation}\label{eq::g112SinMinush11CosSphere}
        \frac{g_{11,2}}{2}\sin\theta+h_{11}\cos\theta = -\frac{1}{r\omega^2}\tan\frac{v}{r}\left(\kappa\cos\frac{v}{r}+r\omega^2\sigma\sin\frac{v}{r}\right)^2,
    \end{equation}
    which vanishes only along $\gamma$, i.e., $v=0$.  Therefore, we conclude that a rectifying surface of a slant helix in a space form of positive or negative curvature is never a surface of constant parallel angle. 
\end{proof}

\begin{remark}
    The reader can easily verify that, in Euclidean space, the rectifying developable of any slant helix satisfies $\frac{1}{2}g_{11,2}\sin\theta+h_{11}\cos\theta=0$. Alternatively, we could take a shortcut and consider the limit $r\to \infty$ in Eq. \eqref{eq::g112SinMinush11CosSphere}. Thus, we obtain
    \begin{equation}
        (\frac{g_{11,2}}{2}\sin\theta+h_{11}\cos\theta)\vert_{\mathbb{E}^3} = \lim_{r\to \infty}\left[ -\frac{1}{r\omega^2}\tan\frac{v}{r}\left(\kappa\cos\frac{v}{r}+r\omega^2\sigma\sin\frac{v}{r}\right)^2\right] = 0.
    \end{equation}
    Therefore, surfaces of constant angle in Euclidean space do exist as expected.
\end{remark}

What can be said about a manifold with the property that every rectifying surface of a slant helix is also a surface of constant parallel angle? To answer this question, we shall adopt an approach alternative to that leading to the vanishing of the right-hand side of Eq. \eqref{eq::NablaDelUV}. Although Eq. \eqref{eq::NablaDelUV} leads to a necessary and sufficient condition for the rectifying surface of a slant helix to be of constant angle, it is written in terms of properties of the rectifying surface, thus making the role played by the ambient space less transparent.

The Eq. \eqref{eq::NablaDelUV} was obtained by noticing that a rectifying surface $\mathcal{R}_{\gamma}$ is a surface of constant angle if, and only if, $V$ is parallel transported along any curve of the surface. Since $\nabla_{\partial_v}V=0$ by construction, see Eq. \eqref{eq::AxisOfRectifyingSurfOfSlantHelix}, it suffices to check whether $\nabla_{\partial_u}V=0$. It turns out that $\nabla_{\partial_u}V=0$ is equivalent to $\nabla_{\partial_v}\nabla_{\partial_u}V=0$. Indeed, if we can show that $\nabla_{\partial_v}\nabla_{\partial_u}V=0$, then $\nabla_{\partial_u}V$ is parallel transported along the rulings of the rectifying surface. However, $\nabla_{\partial_u}V=0$ along the slant helix $\gamma$, which implies $\nabla_{\partial_u}V=0$ on the remaining points of $\mathcal{R}_{\gamma}$. Therefore, the rectifying surface of a slant helix makes a constant angle with a parallel direction if, and only if, $\nabla_{\partial_v}\nabla_{\partial_u}V=0$.

If $R$ denotes the Riemann curvature tensor of $M^3$, we can write
\[
 R(\partial_u,\partial_v)V =  \nabla_{\partial_v}\nabla_{\partial_u}V - \nabla_{\partial_u}\nabla_{\partial_v}V+\nabla_{[\partial_u,\partial_v]}V = \nabla_{\partial_v}\nabla_{\partial_u}V.
\]
Consequently, the rectifying surface of a slant helix makes a constant angle with a parallel direction if, and only if, $R(\partial_u,\partial_v)V=0$. Using the expression for $V$ in Eq. \eqref{eq::AxisOfRectifyingSurfOfSlantHelix}, the condition $R(\partial_u,\partial_v)V=0$ becomes
\begin{equation}
    0 = R(\partial_u,\partial_v)V = \sin\theta\, R(\partial_u,\partial_v)\partial_v - \cos\theta\, R(\partial_u,\partial_v)\,\nu.
\end{equation}
Taking the inner product with $\partial_u$ gives the following necessary condition for $\mathcal{R}_{\gamma}$ to be a surface of constant angle
\begin{equation}
    \sin\theta\, R_{uvvu} - \cos\theta\, R_{uvnu} = 0 \Rightarrow \cot\theta\, R_{uvun} = R_{uvuv},
\end{equation}
where the indices $i=u,v,n$ refer to the directions $\partial_u$, $\partial_v$, and $\nu$, respectively.

\begin{proof}[Alternative proof of Proposition \ref{prop::NoSurfConstAngleInSpaceForm}]
    Let $\gamma$ be a non-trivial slant helix, $\sigma=\cot\theta\not=0$, and let $\mathcal{R}_{\gamma}$ be its rectifying surface. In a space form, any component of the curvature tensor with three distinct indices must vanish. Then, $R_{uvun} = 0$. On the other hand,  $R_{ijij}\not=0$ and, therefore, we cannot have $ R_{uvuv} = \cot\theta\,R_{uvun}$. Thus, there exists no surface in $\mathbb{S}^3(r)$ or $\mathbb{H}^3(r)$ of constant parallel angle.
\end{proof}

We can now characterize those manifolds with the property that \emph{every} rectifying surface of a slant helix is a surface of constant angle.

\begin{theorem}
    Let $M^3$ be a smooth Riemannian manifold. Then, $M^3$ is a flat manifold if, and only if, every rectifying surface of a slant helix is a surface of constant parallel angle.
\end{theorem}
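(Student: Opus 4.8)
The plan is to prove both directions using the characterization already established just before this theorem, namely that the rectifying surface $\mathcal{R}_{\gamma}$ of a slant helix $\gamma$ with axis $V$ is a surface of constant parallel angle if, and only if, $R(\partial_u,\partial_v)V=0$, equivalently $\cot\theta\,R_{uvun}=R_{uvuv}$. The forward direction is immediate: if $M^3$ is flat, then its curvature tensor $R$ vanishes identically, so $R(\partial_u,\partial_v)V=0$ holds trivially for the rectifying surface of \emph{any} slant helix, which by the preceding discussion makes every such $\mathcal{R}_{\gamma}$ a surface of constant parallel angle. (Alternatively, since a flat manifold is locally isometric to $\mathbb{E}^3$, one may invoke Theorem \ref{thr::CharactConstAnglSurfInE3} directly.)

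The converse is where the real work lies. The hypothesis is that \emph{every} rectifying surface of a slant helix is a surface of constant parallel angle, and the goal is to deduce $R\equiv 0$. First I would translate the hypothesis into the pointwise curvature condition: for every slant helix $\gamma$ and every point, the identity $R(\partial_u,\partial_v)V=0$ must hold along $\gamma$, in particular at $v=0$ where the frame $\{\partial_u,\partial_v,\nu\}$ coincides with the Frenet frame and $V=\cos\theta\,N+\sin\theta\,D$. The key leverage is the freedom in choosing slant helices: by the parametrization in Eq. \eqref{eq::TorsionOfSlantHelixGivenTheCurvature} and the existence result (Theorem 3.6 of Ref. \cite{CastrillonLopezAM2015}), through any point $p\in M^3$ one can prescribe an arbitrary orthonormal Frenet frame $\{T,N,B\}$ as initial data and realize it by some slant helix. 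Consequently the vanishing $R(T,D)V=0$ must hold for all admissible configurations of $T$, $N$, $B$, and of the angle $\theta$, at the fixed point $p$.

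The main obstacle is the combinatorial/algebraic step of showing that this family of vanishing conditions forces \emph{all} components of $R$ at $p$ to be zero. I would exploit the angular freedom first: since $\theta$ (hence $\sigma=\cot\theta$) can be varied independently while keeping the frame fixed, the relation $\cot\theta\,R_{uvun}=R_{uvuv}$ must hold for a continuum of $\theta$ values, forcing both $R_{uvun}=0$ and $R_{uvuv}=0$ separately. Expanding these in the Frenet frame — with $\partial_u\propto T$ (up to the ruling contribution at $v=0$, where $\partial_u=T$) and $V=\cos\theta\,N+\sin\theta\,D$, $D=(\tau T+\kappa B)/\sqrt{\kappa^2+\tau^2}$ — yields linear relations among the sectional-type and mixed components $R(T,D)N$, $R(T,D)D$ evaluated against $T$, $D$, $N$. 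Varying the orthonormal frame $\{T,N,B\}$ over all of $\mathrm{SO}(3)$ at $p$ then lets these conditions probe every coordinate plane and every index combination.

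The cleanest way to close the argument is to observe that a three-dimensional Riemannian manifold has its curvature tensor fully determined by its sectional curvatures, so it suffices to show every sectional curvature at $p$ vanishes; the conditions $R_{uvuv}=0$ for arbitrary orthonormal $\{T,N,B\}$ and arbitrary slant helix (hence arbitrary direction $D$ in the $T$–$B$ plane) directly give $\langle R(X,Y)Y,X\rangle=0$ for a spanning set of tangent planes at $p$, whence all sectional curvatures vanish. Since $p$ was arbitrary, $M^3$ is flat. I would expect the only delicate point to be verifying that the prescribable directions $T$ and $D$ (which is constrained to the rectifying plane $\mathrm{span}\{T,B\}$) together sweep out enough two-planes to pin down every sectional curvature; this is where invoking the independent freedom of the initial binormal $B$ and the torsion value $c_0$ becomes essential.
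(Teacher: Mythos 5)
Your proposal is correct and follows essentially the same route as the paper's proof: both directions reduce to the criterion $R(\partial_u,\partial_v)V=0$ (equivalently $\sin\theta\,R_{uvuv}-\cos\theta\,R_{uvun}=0$) established just before the theorem, and the converse is settled by using the freedom to prescribe a slant helix through any point with any initial frame, varying $\theta$ to force $R_{uvuv}=0$, and then varying the point and the plane $\mathrm{span}\{T,D\}$ to conclude that every sectional curvature vanishes.
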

\begin{proof}
    From Theorem \ref{thr::CharactConstAnglSurfInE3}, we know that in flat manifolds, any rectifying surface of a slant helix is a surface of constant parallel angle.
    
    Conversely, fix a point $p\in M^3$ and a plane $\Pi\subset T_pM^3$. Consider a non-trivial slant helix $\gamma_{\theta}$ making a constant angle $\theta$ with a fixed direction and such that $\Pi=\mathrm{span}\{T,D\}$ at $p=\gamma_{\theta}(u_0)$. (See discussion leading to Eq. \eqref{eq::TorsionOfSlantHelixGivenTheCurvature}.) We must then have
    \[
     \sin\theta\, R_{uvuv}(p) - \cos\theta\, R_{uvun}(p) = 0. 
    \]
    Note that $\theta$ can take any value in $(0,\frac{\pi}{2})$. Therefore, $R_{uvuv}(p)=0$. From the arbitrariness of $p$ and $\Pi$, we conclude that every sectional curvature of $M^3$ must vanish. Thus, $M^3$ is a flat manifold.
\end{proof}

\subsection{Surfaces of constant parallel angle in product manifolds}
\label{subsect::SurfConstAngleM2xR}

On a manifold with a globally defined parallel transported vector field $V$, such as the vertical direction $\partial_t$ of a product manifold $M^2\times\mathbb{R}$, the condition $R(\partial_u,\partial_u)V=0$ is trivially satisfied. Therefore, every rectifying surface of a slant helix $\gamma:I\to M^2\times\mathbb{R}$ with axis $\partial_t$ is a surface of constant parallel angle.

The metric of $M^2\times\mathbb{R}$ can be written as
\[
 g = g^M + \rmd t^2,
\]
where $g^M$ denotes the metric of $M^2$ and $\rmd t^2$ the standard metric of $\mathbb{R}$. If we denote by  $R^M$ the curvature tensor of $M^2$, we can write the curvature tensor of $M^2\times\mathbb{R}$ as
\begin{equation}
    R(X,Y,Z,W) = R^M(X_M,Y_M,Z_M,W_M),
\end{equation}
where $X_M$ denotes the component of $X$ tangent to $M^2$: $X=X_M+\langle X,\partial_t\rangle\partial_t$.

Let $\Sigma^2\subset M^2\times\mathbb{R}$ be a surface of constant parallel angle $\theta$ and axis $V$. Let $\mathbf{e}_1$ and $\mathbf{e}_2$ be defined as in Eq. \eqref{eq::DecompositionOfVtheConstantDirection}. We can then write
\begin{equation}
    \mathbf{e}_1 = \mathbf{E}_1 + \sin\theta\,\partial_t, \quad \hat{\mathbf{E}}_1 = \frac{\mathbf{E}_1}{\Vert\mathbf{E}_1\Vert}, \quad \mbox{and} \quad \mathbf{e}_2 = \hat{\mathbf{E}}_2,
\end{equation}
where $\hat{\mathbf{E}}_1$ and $\hat{\mathbf{E}}_2$ are tangent to $M^2$. Note that $\mathbf{E}_1=\cos\theta\,\hat{\mathbf{E}}_1$. Indeed,
\[
 g^M({\mathbf{E}}_1,{\mathbf{E}}_1) = \langle\mathbf{e}_1 - \sin\theta\,\partial_t,\mathbf{e}_1 - \sin\theta\,\partial_t\rangle = 1 - 2\sin\theta\langle\mathbf{e}_1,\partial_t\rangle + \sin^2\theta = \cos^2\theta.
\]

\begin{proposition}\label{Prop::KintSurfCteAngleInM2xR}
    Let $\Sigma^2\subset M^2\times\mathbb{R}$ be a surface of constant parallel angle $\theta$. If we denote by $K^M$ the (intrinsic) Gaussian curvature of $M^2$, then the intrinsic Gaussian curvature $K_{int}$ of $\Sigma^2$ is given by
    \begin{equation}
        K_{int} = K^M\cos^2\theta.
    \end{equation}
\end{proposition}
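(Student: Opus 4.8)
The plan is to express the intrinsic Gaussian curvature of $\Sigma^2$ via the Gauss equation relating intrinsic and extrinsic curvature to the sectional curvature of the ambient space $M^2\times\mathbb{R}$, and then to exploit the product structure of the ambient curvature tensor. First I would recall that since $\Sigma^2$ is a surface of constant parallel angle, Theorem \ref{thr::ConstParAngleSurfAreExtFlat} gives $K_{ext}=0$. Hence the Gauss equation $K_{int}-K_{ext}=K_{sec}(\mathbf{e}_1,\mathbf{e}_2)$ reduces to $K_{int}=K_{sec}(\mathbf{e}_1,\mathbf{e}_2)=\langle R(\mathbf{e}_1,\mathbf{e}_2)\mathbf{e}_2,\mathbf{e}_1\rangle=R(\mathbf{e}_1,\mathbf{e}_2,\mathbf{e}_2,\mathbf{e}_1)$, where $\{\mathbf{e}_1,\mathbf{e}_2\}$ is the tangent orthonormal frame of $\Sigma^2$ introduced in Eq. \eqref{eq::DecompositionOfVtheConstantDirection}.

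Next I would substitute the product formula for the ambient curvature tensor, namely $R(X,Y,Z,W)=R^M(X_M,Y_M,Z_M,W_M)$, where a subscript $M$ denotes the component tangent to the $M^2$ factor. The key inputs are the decompositions already recorded just before the statement: $\mathbf{e}_1=\mathbf{E}_1+\sin\theta\,\partial_t$ with $\mathbf{E}_1=\cos\theta\,\hat{\mathbf{E}}_1$, and $\mathbf{e}_2=\hat{\mathbf{E}}_2$, where $\hat{\mathbf{E}}_1,\hat{\mathbf{E}}_2$ are unit vectors tangent to $M^2$. Therefore $(\mathbf{e}_1)_M=\cos\theta\,\hat{\mathbf{E}}_1$ and $(\mathbf{e}_2)_M=\hat{\mathbf{E}}_2$, so the $\partial_t$ component of $\mathbf{e}_1$ simply drops out of $R$. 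Plugging these in and using the multilinearity of $R^M$ in all four slots, I would pull out the factor of $\cos\theta$ from each of the two appearances of $\mathbf{e}_1$, giving
\[
 K_{int}=R^M(\cos\theta\,\hat{\mathbf{E}}_1,\hat{\mathbf{E}}_2,\hat{\mathbf{E}}_2,\cos\theta\,\hat{\mathbf{E}}_1)=\cos^2\theta\,R^M(\hat{\mathbf{E}}_1,\hat{\mathbf{E}}_2,\hat{\mathbf{E}}_2,\hat{\mathbf{E}}_1).
\]
Finally, since $\{\hat{\mathbf{E}}_1,\hat{\mathbf{E}}_2\}$ is an orthonormal frame of the two-dimensional factor $M^2$, the quantity $R^M(\hat{\mathbf{E}}_1,\hat{\mathbf{E}}_2,\hat{\mathbf{E}}_2,\hat{\mathbf{E}}_1)$ is precisely the Gaussian curvature $K^M$ of $M^2$, yielding $K_{int}=K^M\cos^2\theta$.

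I do not anticipate a serious obstacle here, as the argument is essentially a bookkeeping exercise once the two ingredients — extrinsic flatness and the product form of the curvature tensor — are in place. The only point requiring a little care is verifying that $\hat{\mathbf{E}}_1$ and $\hat{\mathbf{E}}_2$ are genuinely orthonormal in the metric $g^M$ of $M^2$, so that the four-slot curvature component really equals $K^M$ rather than $K^M$ times some Gram-determinant factor. This follows because $\hat{\mathbf{E}}_2=\mathbf{e}_2$ is a unit vector lying in $M^2$, $\hat{\mathbf{E}}_1$ is unit by construction, and their orthogonality descends from the orthogonality of $\mathbf{e}_1$ and $\mathbf{e}_2$ together with the fact that the $\partial_t$ direction is $g$-orthogonal to the $M^2$ factor; hence no spurious normalization factor appears.
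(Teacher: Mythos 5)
Your proposal is correct and follows essentially the same route as the paper's proof: apply the Gauss equation with $K_{ext}=0$ from Theorem \ref{thr::ConstParAngleSurfAreExtFlat}, use the product form of the ambient curvature tensor so that the $\partial_t$ component of $\mathbf{e}_1$ drops out, and pull $\cos^2\theta$ out of the two slots containing $\mathbf{E}_1=\cos\theta\,\hat{\mathbf{E}}_1$. The extra check that $\hat{\mathbf{E}}_1,\hat{\mathbf{E}}_2$ are $g^M$-orthonormal is a sensible addition but does not change the argument.
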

\begin{proof}
    The Gaussian curvature of $M^2$ can be written as $K^M=R^M(\hat{\mathbf{E}}_1,\hat{\mathbf{E}}_2,\hat{\mathbf{E}}_1,\hat{\mathbf{E}}_2)$. Now, using the fact that $\Sigma^2$ is extrinsically flat in $M^2\times\mathbb{R}$, we obtain
    \begin{eqnarray*}
     R^{\Sigma}(\mathbf{e}_1,\mathbf{e}_2,\mathbf{e}_1,\mathbf{e}_2) & = & R(\mathbf{e}_1,\mathbf{e}_2,\mathbf{e}_1,\mathbf{e}_2) = R^M(\mathbf{E}_1,\mathbf{e}_2,\mathbf{E}_1,\mathbf{e}_2) \nonumber \\
    & = & R^M(\cos\theta\,\hat{\mathbf{E}}_1,\hat{\mathbf{E}}_2,\cos\theta\,\hat{\mathbf{E}}_1,\hat{\mathbf{E}}_2) \nonumber \\
    & = & \cos^2\theta\,R^M(\hat{\mathbf{E}}_1,\hat{\mathbf{E}}_2,\hat{\mathbf{E}}_1,\hat{\mathbf{E}}_2) \nonumber \\
    & = & K^M\cos^2\theta.
\end{eqnarray*}
\end{proof}

Together with Corollary \ref{cor::KintSurfacesCTEangle}, the quantity $\lambda$ in Eq. \eqref{eq::ShapeOpOfConstParalAngleSurf} determining the shape operator of a surface $\Sigma^2$ of constant parallel angle in a product manifold is a solution of the differential equation
\begin{equation}\label{eq::RiccatiEqForLambda}
    \lambda_{,1}+\lambda^2\cot\theta = - \frac{1}{2}K^M\sin2\theta,
\end{equation}
where $\lambda_{,1}=\bfe_1\cdot\nabla\lambda$ and $\mathbf{e}_1$ is the unit length vector obtained from the projection of the axis $V$ over $\Sigma^2$. Therefore, $\lambda$ is the solution of a Riccati equation. The values of $\lambda$ are then determined by prescribing $\lambda$ along a curve of $\Sigma^2$.

\begin{remark}
    The expression for the intrinsic curvature given by Prop. \ref{Prop::KintSurfCteAngleInM2xR} generalizes the results by Dillen \emph{et al.} in $\mathbb{S}^2\times\mathbb{R}$ and $\mathbb{H}^2\times\mathbb{R}$. (See Prop. 2 of Ref. \cite{dillen2007constantangleS2xR} and Prop. 2.3 of Ref. \cite{dillen2009constantangleH2xR}.) The Riccati equation we obtained for $\lambda$ can be compared with Eq. (24) of Ref. \cite{dillen2007constantangleS2xR} and Eq. (18) of Ref. \cite{dillen2009constantangleH2xR}. The left-hand side of Eq. \eqref{eq::RiccatiEqForLambda} is obtained by an abstract manipulation of the curvature tensor of a surface of constant parallel angle and of the ambient space, while the right-hand side of Eq. \eqref{eq::RiccatiEqForLambda} relies on the explicit knowledge of the curvature tensor.
\end{remark}

Dillen \emph{et al.} studied surfaces making a constant angle with the vertical direction $\partial_t$ in $\mathbb{S}^2\times\mathbb{R}$ \cite{dillen2007constantangleS2xR} and $\mathbb{H}^2\times\mathbb{R}$ \cite{dillen2009constantangleH2xR}. They obtained an explicit parametrization of constant angle surfaces: (please note the difference in notation for the parameters $u$ and $v$)
\[
 F(u,v) = (\cos(v\cos\theta)f(u)+\sin(v\cos\theta)f(u)\times f'(u),v\,\sin\theta) \in \mathbb{S}^2\times\mathbb{R}
\]
and
\[
 F(u,v) = (\cosh(v\cos\theta)f(u)+\sinh(v\cos\theta)f(u)\boxtimes f'(u),v\,\sin\theta)\in \mathbb{H}^2\times\mathbb{R},
\]
where $\boxtimes$ denotes the vector product in the Lorentzian space $\mathbb{E}_1^3$. Note that the horizontal coordinates of these immersions provide a parametrization of the horizontal manifold by semi-geodesic coordinates. The operators $X\mapsto f\times X$ and $X\mapsto f\boxtimes X$ can be seen as $\frac{\pi}{2}$-rotation in $\mathbb{S}^2$ and $\mathbb{H}^2$, respectively. These observations imply that only the vertical coordinate determines whether a ruled surface in $M^2\times\mathbb{R}$ makes a constant angle with $\partial_t$.

\begin{lemma}\label{lemma::ParameOfRuledSurfInM2xRBySemiGeodCoordOfM2}
    Let $\Sigma^2\subset M^2\times\mathbb{R}$ be any regular ruled surface. If $\exp^M$ denotes the exponential map of $M^2$, then we can locally parametrize $\Sigma^2$ as
    \[
      X(u,v) = (x(u,v),t(u,v))=\Bigg(\exp^M_{\alpha(u)}\Big(v\,J\big(\alpha'(u)\big)\Big),a(u)v+b(u)\Bigg),
    \]
    where $\alpha:I\to M^2$ is a unit speed curve, $a,b:I\to\mathbb{R}$ are smooth functions, and $J$ is a $\frac{\pi}{2}$-rotation on the tangent planes of $M^2$.
\end{lemma}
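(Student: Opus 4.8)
The plan is to exploit the product structure of $M^2\times\mathbb{R}$ in order to split each ruling into its horizontal and vertical parts, and then to recognize the resulting horizontal picture as a semi-geodesic (Fermi) coordinate system, thereby making precise the observation in the preceding remark that ``only the vertical coordinate determines whether a ruled surface in $M^2\times\mathbb{R}$ makes a constant angle with $\partial_t$.'' First I would recall that geodesics of a Riemannian product split, so that $\exp_{(p,t_0)}\big(v(w_M,w_t)\big)=(\exp^M_p(vw_M),\,t_0+vw_t)$; in particular the $\mathbb{R}$-factor of any ruling is automatically affine in its geodesic parameter. Writing $\Sigma^2$ as $X(u,v)=\exp_{(\alpha_0(u),h_0(u))}\big(v(w_M(u),w_t(u))\big)$ for some directrix and ruling field, and assuming (as holds in the constant-angle application, where $\theta\in(0,\frac{\pi}{2})$ forces the rulings to be transverse to $\partial_t$) that $w_M(u)\neq0$, I would rescale $v$ so that $w_M$ becomes a unit field. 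At this stage the vertical coordinate already has the desired affine shape and the horizontal part is exhibited as a one-parameter family of unit-speed geodesics of $M^2$.

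The core of the argument is to bring this horizontal family into semi-geodesic form. Projecting to $M^2$, the rulings descend to a local foliation $\mathcal{F}$ of a region $\Omega\subset M^2$ by geodesics. I would then choose a unit-speed curve $\alpha:I\to M^2$ that is an orthogonal trajectory of $\mathcal{F}$ (such trajectories exist locally by integrating the line field perpendicular to the leaves) and reparametrize the ruling label $u$ by the arc length of $\alpha$. By construction $\alpha'(u)$ is orthogonal to the leaf of $\mathcal{F}$ through $\alpha(u)$; since that leaf is a geodesic and $J(\alpha'(u))$ is (up to sign, fixed by orientation) the unit normal to $\alpha$, uniqueness of geodesics with prescribed initial point and velocity forces the leaf to coincide with $v\mapsto\exp^M_{\alpha(u)}(vJ(\alpha'(u)))$. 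Hence the horizontal component of $\Sigma^2$ takes exactly the stated form $x(u,v)=\exp^M_{\alpha(u)}(vJ(\alpha'(u)))$.

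Finally I would transport this reparametrization back to the full surface and recover the vertical coordinate. Because each ruling is a geodesic of the product it has constant speed, so its $M^2$-length is a fixed multiple of the ambient arc length along that ruling; therefore the new parameter $v$ (horizontal arc length measured from $\alpha$) is an affine function of the original geodesic parameter on each ruling, and the vertical coordinate, being affine in the latter, remains affine in $v$. This yields $t(u,v)=a(u)v+b(u)$ with $a,b$ smooth, and combining the two factors gives the claimed local parametrization.

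I expect the main obstacle to be the semi-geodesic normalization of the second paragraph: one must check that the foliation $\mathcal{F}$ by the projected rulings genuinely agrees with the foliation by geodesics fired orthogonally from $\alpha$, and that the map $(u,v)\mapsto\exp^M_{\alpha(u)}(vJ(\alpha'(u)))$ is a local diffeomorphism. Both are standard properties of Fermi coordinates valid away from focal points, but they depend on the local existence of an orthogonal trajectory and on excluding vertical rulings (where $w_M$ degenerates and the construction of $\alpha$ breaks down). These regularity caveats are precisely what confines the conclusion to a \emph{local} parametrization.
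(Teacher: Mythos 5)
Your proposal is correct and follows essentially the same route as the paper's proof: split the product geodesics into horizontal and vertical parts, observe that the rulings project to a geodesic foliation of $M^2$, take a unit-speed orthogonal trajectory $\alpha$ of that foliation as the new directrix (so the leaves become $v\mapsto\exp^M_{\alpha(u)}(vJ\alpha'(u))$), and note that the vertical coordinate remains affine in the new ruling parameter. Your version merely spells out the regularity caveats (non-vertical rulings, local validity of the Fermi coordinates) that the paper leaves implicit.
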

\begin{proof}
    A curve of $M^2\times\mathbb{R}$ is a geodesic if, and only if, its horizontal and vertical projections are a geodesic of $M^2$ and a geodesic of $\mathbb{R}$, respectively. Therefore, every ruled surface in $M^2\times\mathbb{R}$ induces a foliation of $M^2$ by geodesics of $M^2$. Choosing a curve $\alpha:I\to M^2$ orthogonal to the geodesics of $M^2$, we obtain a parametrization of $M^2$ as $x(u,v)=\exp^M_{\alpha(u)}(v\,J\alpha'(u))$. In addition, we may take the lift of $\alpha$ to $\Sigma^2$ as the new generating curve. Finally, any geodesic of $\mathbb{R}$ is parametrized as $av+b$.
\end{proof}

We already know that any surface in $M^2\times\mathbb{R}$ making a constant angle with the vertical direction must be a ruled surface (Theorem \ref{thr::ConstParAngleSurfAreRuled}). Using the Ansatz provided by Lemma \ref{lemma::ParameOfRuledSurfInM2xRBySemiGeodCoordOfM2}, we can easily characterize surfaces of constant angle in $M^2\times\mathbb{R}$ by describing the function $a(u)$ and $b(u)$ only. 

\begin{theorem}\label{thr::RepresentationCteAngSurfInM2xR}
    A surface $\Sigma^2\subset M^2\times\mathbb{R}$ makes a constant parallel angle $\theta$ with the real direction if, and only if, up to isometries of $M^2\times\mathbb{R}$, $\Sigma^2$ is locally parametrized as
    \begin{equation}\label{eq::RepresentationCteAngSurfInM2xR}
        X(u,v) = (x(u,v),t(u,v)) = \Bigg(\exp^M_{\alpha(u)}\Big(v\cos\theta\,J\big(\alpha'(u)\big)\Big),v\sin\theta\Bigg),
    \end{equation}
    where $\alpha:I\to M^2$ is a unit speed curve, $\exp^M$ is the exponential map in $M^2$, and $J$ is a $\frac{\pi}{2}$-rotation on the tangent planes of $M^2$.
\end{theorem}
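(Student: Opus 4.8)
The plan is to prove both implications, drawing on Theorem \ref{thr::ConstParAngleSurfAreRuled} and Lemma \ref{lemma::ParameOfRuledSurfInM2xRBySemiGeodCoordOfM2} for the ``only if'' part, and on a direct frame computation for the ``if'' part. The key simplification is that $\partial_t$ is globally parallel in $M^2\times\mathbb{R}$, so it is parallel transported along every curve of any surface; hence ``constant parallel angle $\theta$ with $\partial_t$'' is equivalent to $\langle\nu,\partial_t\rangle=\cos\theta$ being a nonzero constant. I treat $\theta\in(0,\frac{\pi}{2})$, since $\theta=0$ (horizontal slice) and $\theta=\frac{\pi}{2}$ (vertical cylinder) are read off immediately from the stated parametrization. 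Throughout I use the decomposition $\partial_t=\sin\theta\,\mathbf{e}_1+\cos\theta\,\nu$ of the axis, where $\mathbf{e}_1$ is the unit projection of $\partial_t$ and $\mathbf{e}_2\perp\partial_t$; the latter orthogonality will be the linchpin of the argument.

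For the forward direction, Theorem \ref{thr::ConstParAngleSurfAreRuled} gives that $\Sigma^2$ is extrinsically flat and ruled with rulings tangent to $\mathbf{e}_1$, and Lemma \ref{lemma::ParameOfRuledSurfInM2xRBySemiGeodCoordOfM2} supplies local coordinates $X(u,v)=(\exp^M_{\alpha(u)}(v\,J\alpha'(u)),a(u)v+b(u))$ with $\alpha$ of unit speed. Along a ruling the horizontal velocity is the unit geodesic velocity $w$, so $X_v=(w,a)$ with $\|X_v\|^2=1+a^2$; since $X_v/\|X_v\|=\mathbf{e}_1$ and $\langle\partial_t,\mathbf{e}_1\rangle=\sin\theta$, I obtain $a/\sqrt{1+a^2}=\sin\theta$, forcing $a\equiv\tan\theta$. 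Reparametrizing $v\mapsto v/\cos\theta$ then replaces the ruling step by $v\cos\theta\,J\alpha'(u)$ and turns the vertical coordinate into $v\sin\theta+b(u)$, so only the elimination of $b$ remains.

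I expect removing $b$ to be the main obstacle, and the plan is to resolve it as follows. Because $a\neq0$, each level set $\{t=\mathrm{const}\}$ meets $\Sigma^2$ in a curve $c$ transverse to the rulings; after a vertical translation (an isometry of $M^2\times\mathbb{R}$) I may take its height to be $0$. The crucial point is that $\mathbf{e}_2\perp\partial_t$, so any tangent vector of $\Sigma^2$ with vanishing vertical component is a multiple of $\mathbf{e}_2$; in particular $c'$ is horizontal and orthogonal to the rulings. Hence its horizontal projection $\hat\alpha$ is orthogonal in $M^2$ to the horizontal geodesics, i.e.\ the unit horizontal ruling direction equals $\pm J\hat\alpha'$. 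Reparametrizing $\hat\alpha$ to unit speed, fixing the orientation of $J$, and rewriting the ruled surface with generating curve $c$ then yields exactly \eqref{eq::RepresentationCteAngSurfInM2xR}; everything rests on the identity $\mathbf{e}_2\perp\partial_t$, which ties the intrinsic frame $\{\mathbf{e}_1,\mathbf{e}_2\}$ to the product structure.

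For the converse, I would compute the frame of \eqref{eq::RepresentationCteAngSurfInM2xR} directly. Each ruling is a product of a constant-speed $M^2$-geodesic and a line, hence a geodesic of $M^2\times\mathbb{R}$ with $\nabla_{X_v}X_v=0$; moreover $X_v=(\cos\theta\,w,\sin\theta)$ is unit with $\langle X_v,\partial_t\rangle=\sin\theta$, while $X_u$ has zero vertical component and so is horizontal. Using $\nabla_{X_u}X_v=\nabla_{X_v}X_u$ and $\nabla_{X_v}X_v=0$ gives $\partial_v\langle X_u,X_v\rangle=\tfrac12\partial_u\|X_v\|^2=0$, and since $\langle X_u,X_v\rangle=\cos\theta\langle\alpha',J\alpha'\rangle=0$ at $v=0$, it vanishes identically, so $X_u\perp X_v$. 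As $X_u$ is horizontal, $\partial_t\perp X_u$, whence $\partial_t\in\mbox{span}\{X_v,\nu\}$; the unit-length condition together with $\langle\partial_t,X_v\rangle=\sin\theta$ then forces $\langle\nu,\partial_t\rangle=\pm\cos\theta$, a constant. Since $\partial_t$ is parallel, it is a legitimate axis and $\Sigma^2$ makes constant angle $\theta$ with it, completing the equivalence.
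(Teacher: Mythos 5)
Your proposal is correct and follows essentially the same route as the paper: both directions rest on Theorem \ref{thr::ConstParAngleSurfAreRuled} and Lemma \ref{lemma::ParameOfRuledSurfInM2xRBySemiGeodCoordOfM2}, and the identity $a=\tan\theta$ is derived from the ruling direction in exactly the same way. The only minor variations are that you eliminate $b$ by re-choosing the generating curve as a horizontal level curve using $\mathbf{e}_2\perp\partial_t$ (the paper instead computes $\nu$ along $v=0$ and deduces $b'=0$ from $\langle\nu,\partial_t\rangle=\cos\theta$), and in the converse you justify $X_u\perp X_v$ explicitly via $\nabla_{X_v}X_v=0$, a point the paper leaves implicit in its use of semi-geodesic coordinates.
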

\begin{proof}
Let $\Sigma^2$ be parametrized as in Eq. \eqref{eq::RepresentationCteAngSurfInM2xR}. Its tangent vectors are $X_u=(x_u,0)$ and $X_v=(x_v,\sin\theta)$. (Note that $\langle x_v,x_v\rangle=\cos^2\theta$.) Then, the unit normal of $\Sigma^2$ is
\[ 
 \nu = - \tan\theta(x_v,0) + \cos\theta\,\partial_t.
 \]
Finally, $\langle\nu,\partial_t\rangle=\cos\theta$ and, therefore, $\Sigma^2$ is a surface of constant parallel angle.

Conversely, let $\Sigma^2$ be a surface of constant parallel angle. Let us parametrize $\Sigma^2$ as in Lemma \ref{lemma::ParameOfRuledSurfInM2xRBySemiGeodCoordOfM2}:
\[
 X(u,v)=(x(u,v),t(u,v))=(\exp^M_{\alpha(u)}(vJ\alpha'(u)),a(u)v+b(u)).
\]
Note that $x(u,v)$ is an orthogonal parametrization of $M^2$ with $\Vert x_v\Vert=1$.

The director field of the rulings, $X_v=(x_v,a)$, makes a constant angle $\frac{\pi}{2}-\theta$ with $\partial_t$. (This follows from Eq. \eqref{eq::DecompositionOfVtheConstantDirection} and the fact that the rulings are tangent to $\mathbf{e}_1$.) Then,
\[
 a = \langle X_v,\partial_t\rangle = \Vert X_v\Vert\Vert\partial_t\Vert\cos(\frac{\pi}{2}-\theta)=\sqrt{1+a^2}\,\sin\theta \, \Rightarrow \, a = \tan\theta.
\]

Along the generating curve, i.e., $v=0$, the unit normal of $\Sigma^2$ is given by
\begin{equation}
    \nu(u,0) = \frac{1}{\sqrt{1+\tan^2\theta+b'^2}}\,\Big(-b'(x_u,0)-\tan\theta\,(x_v,0)+\partial_t\Big).
\end{equation}
Using that $\Sigma^2$ makes a constant angle $\theta$ with $\nu$ gives
\[
 \cos\theta = \langle \nu,\partial_t\rangle = \frac{1}{\sqrt{1+\tan^2\theta+b'^2}} \quad \Rightarrow \quad b(u) = b_1u+b_0.
\]
We can set $b_0=0$ by performing a vertical translation. Substituting $b=b_1u$ in the constant angle condition implies 
\[
 1 = \cos^2\theta(1+\tan^2\theta+b_1^2) = \cos^2\theta + \sin^2\theta + b_1^2\cos^2\theta = 1 + b_1^2\cos^2\theta \Rightarrow b_1 = 0.
\]
Therefore, performing the reparametrization $v\mapsto v\cos\theta$, we finally conclude that
\[
  X(u,v) = (\exp^M_{\alpha}(v\cos\theta\,J\alpha'),v\sin\theta).
\]
\end{proof}

Theorem \ref{thr::RepresentationCteAngSurfInM2xR} can be seen as a particular instance of Theorem 5 of Ref. \cite{MTVdV20}. See the final paragraph of the Concluding remarks section.

\section{Concluding remarks}
\label{sect::Conclusion}

We studied curves and surfaces in a generic Riemannian manifold that make a constant angle with a parallel transported field. The classification of Euclidean generalized and slant helices regarding their curvature and torsion was proved valid for \emph{any} ambient space. However, this scenario changed dramatically when we studied cylinders and surfaces of constant angle and the relation between their geodesics and helices. Flatness emerged as a recurring theme in this context: demanding that all geodesics of \emph{any} cylinder and \emph{any} surface of constant angle are respectively generalized and slant helices with the same axes as their surfaces are only possible on flat manifolds. 

Quite surprisingly, the idea of using parallel transported directions as axes of surfaces in isotropic manifolds such as the sphere and hyperbolic space proved unfruitful; there exists no surface of constant parallel angle and no intrinsic cylinder whose geodesics are all generalized helices in space forms with positive or negative curvature. The problem is that demanding parallel transport along two independent directions is too restrictive. Thus, one idea to circumvent the null results in non-flat space forms would be to relax the condition of parallel transport along any curve on the surface. Instead, we could, for example, introduce a weak notion of constant angle surface by demanding that the axis is parallel transported along one family of parametric curves only. It is unclear how much of the theory presented in Sections \ref{sect::SurfContAngle} and \ref{sect::SlantHelicesAndSurfConstAngle} would remain valid for surfaces of constant angle in this weak sense. 

Concerning helices as geodesics of cylinders or surfaces of constant angle, we only considered the possibility that the helices' axes coincide with the axis of the surface. Thus, it remains to investigate whether these geodesics could be helices but with a distinct axis. For example, if $\gamma$ is a generalized helix which is also a geodesic of a cylinder $\mathcal{C}_{\beta,V}$, then the axis of $\gamma$ could be obtained by rotating on the tangent planes of the cylinder\footnote{If $\gamma$ is a geodesic of $\mathcal{C}_{\beta,V}$, then its principal normal is a unit normal for $\mathcal{C}_{\beta,V}$ along $\gamma$. If, in addition, $\gamma$ is a generalized helix, its axis is orthogonal to its principal normal; see Theorem \ref{thr::CharactParallelGenHelices}. Therefore, the axis of a cylindrical geodesic must be tangent to the cylinder.} the vector field $\bar{V}$ defined as the extension of $V$ by parallel transport along the rulings of $\mathcal{C}_{\beta,V}$. The possibility of rotating $\bar{V}$ in this way may depend on properties of the ambient space or, otherwise, it may be the case that a cylindrical geodesic $\gamma$ is also a generalized helix if, and only if, its axis coincides with the cylinder's axis.

As discussed in the Introduction, there is another way of defining a ``fixed direction"; we may consider Killing vector fields. Intuitively, defining a submanifold of constant angle with respect to a parallel transported direction only demands moving a single vector along the submanifold. On the other hand, if we use Killing fields, the intuition is that of isometrically deforming the entire submanifold in the direction of a vector so that we can perform the angular measurement. Therefore, one may naively expect a theory of constant angle based on parallel transport to be more flexible than one based on Killing fields. Nonetheless, using Killing fields to define surfaces of constant angle proved more fruitful in at least one of the contexts where our theory based on parallel transport failed, namely, manifolds of constant positive curvature. Indeed, in the 1990s, Barros proposed defining generalized helices as curves that make a constant angle with a Killing vector field of constant length \cite{BarrosPAMS1997}. Analogously, one can define slant helices with respect to Killing vector fields of constant length \cite{LucasJKMS2017}. This alternative theory in the sphere $\mathbb{S}^3$ provides beautiful theorems for helices seen as 3d curves or geodesics of surfaces of constant angle and certain cylinders. Unfortunately, there are still limitations, as the same success is not verified in the hyperbolic space $\mathbb{H}^3$. For example, only trivial helices exist in $\mathbb{H}^3$ \cite{BarrosPAMS1997,LucasJKMS2017}, i.e., curves with constant curvature and torsion. Such negative results seem to contradict intuition; after all, $\mathbb{H}^3$ is richer in isometries than the sphere and Euclidean space\footnote{To illustrate this point of view, consider two-dimensional space forms. The three independent Killing fields of $\mathbb{S}^2$ are essentially the same: a rotation around an axis. The three independent Killing fields of $\mathbb{E}^2$ are essentially two: rotation around a point and translation along a direction. However, the three independent Killing fields of $\mathbb{H}^2$ are indeed three; hyperbolic, parabolic, and elliptic rotations generate orbits with curvature $0<\kappa<1$, $\kappa=1$, and $\kappa>1$, respectively.}. This failure may be because the Killing fields serving as axes must have constant length. It certainly makes sense to demand that a vector field that plays the role of a fixed direction must have a constant length. However, if the goal is to hierarchize submanifolds based on their ``simplicity", one can and should weaken the notion of ``fixed direction" if necessary and then proceed to arrange the geometric objects of interest on the next level of the hierarchy. To complete the theory of curves and surfaces of constant angle, one must admit that the axes can be Killing fields whose length is not necessarily constant. This is presently under investigation by the authors and will be the subject of a follow-up
work.

Finally, yet another way to generalize the study of constant angle is by selecting one of the properties that follow from the constant angle condition and then studying submanifolds with such a property. For example, from Proposition \ref{prop::PropertiesOfTangentProjOfParallelDirection}, the tangent projection of the axis $V$ of a surface $\Sigma^2$ of constant angle is a principal direction. We say that $\Sigma^2$ has the \emph{principal direction property}, see Ref. \cite{MTVdV20} and references therein. A study of submanifolds with the principal direction property has been recently done by Manfio \emph{et al.} on space forms, product spaces $\mathbb{S}^n\times\mathbb{R}$ and $\mathbb{H}^n\times\mathbb{R}$, and warped products $I\times_{\rho}\mathbb{Q}_{\varepsilon}^n$ where $I\subseteq\mathbb{R}$ and $\mathbb{Q}_{\varepsilon}^n$ is a simply connected space form of constant sectional curvature $\varepsilon\in\{-1,0,1\}$ \cite{MTVdV20}. The study of the principal direction property has some advantages, as the vector field whose projection is a principal direction does not need to be of unit length. In addition, the property is invariant under conformal transformations, implying, for example, that one can translate the results about surfaces in $\mathbb{S}^2\times\mathbb{R}$ making a constant angle with $\partial_t$ into results about surfaces in $\mathbb{E}^3$ making a constant angle with the radial direction \cite{MTVdV20}. In the context of the principal direction property, our Theorem \ref{thr::RepresentationCteAngSurfInM2xR} can be seen as a particular instance of Theorem 5 of Ref. \cite{MTVdV20}.

\backmatter

\bmhead{Acknowledgements} 
Luiz da Silva acknowledges the support provided by the Mor\'a Miriam Rozen Gerber fellowship for Brazilian postdocs and also the Faculty of Physics Postdoctoral Excellence Fellowship during his training at the Weizmann Institute of Science. In addition, he thanks the hospitality of the Department of Mathematics at the Federal Rural University of Pernambuco and all the delicious coffee during two visits in 2023 when significant parts of this manuscript were written. Finally, the authors would like to thank Joeri van der Veken for bringing to our attention the idea of extending the study of constant angle using the principal direction property.

\bmhead{Conflict of Interest Statement} 
All authors certify that they have no affiliations with or involvement in any organization or entity with any interest in the subject matter or materials discussed in this manuscript.


\end{document}